\numberwithin{theorem}{section}
\newcommand{\TheTitle}{Construction of h-refined continuous finite element spaces with arbitrary hanging node configurations 
and applications to multigrid algorithms} 
\newcommand{\TheAuthors}{Eugenio Aulisa, Giacomo Capodaglio, Guoyi Ke}
\headers{\TheTitle}{\TheAuthors}
\title{{\TheTitle}\thanks{Submitted to the editors October 2, 2017.
\funding{This work was supported by the National Science Foundation grant DMS-1412796.}}}
\author{
  Eugenio Aulisa\thanks{Broadway \& Boston, Department of Mathematics and Statistics, Texas Tech University, Lubbock TX 79409, USA
    (\email{eugenio.aulisa@ttu.edu}),
    (\email{giacomo.capodaglio@ttu.edu}),
    (\email{guoyi.ke@ttu.edu}).}
  \and
  Giacomo Capodaglio \footnotemark[2]
  \and
  Guoyi Ke \footnotemark[2]
}
\def\expandafter\normalsize\expandafter{%
    \normalsize
    \setlength\abovedisplayskip{5pt}
    \setlength\belowdisplayskip{5pt}
    \setlength\abovedisplayshortskip{5pt}
    \setlength\belowdisplayshortskip{5pt}
}
\newtheorem{remark}{Remark}
\renewcommand*{\vec}[1]{\mathbf{#1}}
\newcommand{\new}[1]{\textcolor{black}{{#1}}}
\newcommand{\old}[1]{\textcolor{red}{}}
\begin{document}

\maketitle 

\begin{abstract}
We present a novel approach for the construction of basis functions to be employed
in selective or adaptive h-refined finite element applications with arbitrary-level hanging node configurations.
Our analysis is not restricted to $1$-irregular meshes, as it is usually done in the literature, 
allowing our results to be applicable to a broader class of local refinement strategies.
The proposed method does not require the solution of any linear system to obtain the constraints necessary 
to enforce continuity of the basis functions and it can be easily implemented.
A mathematical analysis is carried out to prove that the proposed basis functions are continuous and linearly independent.
Finite element spaces are then defined as the spanning sets of such functions,
and the implementation of a multigrid algorithm built on these spaces is discussed.
A spectral analysis of the multigrid algorithm 
highlights superior convergence properties of the proposed method 
over existing strategies based on a local smoothing procedure.
Finally, linear and nonlinear numerical examples 
are tested to show the robustness and versatility of the multigrid algorithm. 
\end{abstract}

\textbf{\small Key words.} selective h-refinement, arbitrary-level hanging nodes, multigrid methods, finite element method.

\textbf{\small AMS subject classifications.} 65N55, 65N30, 65N22, 65F08.

\section{Introduction}
In many scientific applications, the problem of obtaining an accurate solution in a timely manner
is of major relevance. In a finite element setting, this could be done in a variety of ways.
For instance, local refinement of the finite element grid allows introducing new degrees of freedom only in the areas
of the domain where a more accurate solution is sought.
In this way, the addition of extra degrees of freedom on the entire domain can be avoided, with the result that the computational
time can be significantly decreased.
The method of locally refining the mesh to obtain a more accurate solution on prescribed parts of the domain is usually referred to as h-refinement.
Alternatively, a dual idea consists of increasing the degree of the polynomial functions that approximate the solution only on the elements that require greater accuracy. This approach is usually referred to as p-refinement.
With the intention of combining the advantages of h-refinement and p-refinement, mixed methods have also been introduced \cite{babuvska1981error, guo1986hp, bangerth2009data},
known as hp-refinement strategies.

\new{In this work, the analysis is set in the framework of h-refinement for Lagrangian finite elements and local midpoint refinement is adopted as refinement strategy.}
With this choice, special nodes called hanging nodes are introduced \cite{carstensen2009hanging} and the finite element solution ceases to be continuous, unless extra care is exerted to prevent this.
For instance, continuity could be enforced using a multilevel approach \cite{zander2015multi, aulisa2017mgdd}.
More frequently, modified basis functions for the finite element spaces are considered.
As pointed out in \cite{fries2011hanging}, basis functions may be altered in two different ways, depending on whether or not
degrees of freedom are associated with the hanging nodes.
Our analysis fits in the framework of {\it constrained approximation}, where
no degrees of freedom are associated with the hanging nodes.
We refer to \cite{gupta1978finite, morton1995new} for works that, on the contrary, assign degrees of freedom to the hanging nodes.

In constrained approximation, most of the studies available in the literature require the finite element grids to satisfy the so called $1$-irregularity condition \cite{bank1983some, rachowicz1989toward, rachowicz2000hp, rachowicz2002hp, vsolin2004goal, schroder2011constrained, demkowicz1989toward}.
Such a condition requires the maximum difference between the refinement level of two adjacent elements to be one \cite{vsolin2008arbitrary}.
In this work, we present a novel approach for the construction of continuous basis functions defined on grids with arbitrary-level hanging node configurations. Indeed, the $1$-irregularity condition is not enforced.

Only few recent works avoided the $1$-regularity requirement \cite{vsolin2008arbitrary, di2016easy, schroder2011constrained, kus2014arbitrary}.
The strategy presented in this paper is simpler and more general than the ones suggested in the existing works.
The method here proposed does not require the solution of a linear system for the determination of the constraint coefficients 
like \cite{vsolin2008arbitrary, bangerth2009data}, and it does not require the shape functions to be of tensor product type like \cite{di2016easy}.
Rather, it can be easily implemented once the tree structure of the grid has been determined.
Moreover, our approach works for applications in two and three dimensions and for a wide variety of finite element types, 
unlike other methods that only apply to $2$D problems or to specific element types in $3$D.
\new{In the deal.II library \cite{bangerth2007deal}, the case of $1$-irregular meshes has been implemented in the framework of hp-refinement. There, the constraints are enforced in an edge-by-edge fashion 
and can be determined by solving a linear system that enforces continuity \cite{bangerth2009data}.
However, when dealing with arbitrary-level hanging nodes an edge-by-edge approach is not sufficient 
and the constraints extend to nodes belonging to neighboring elements.
This is intrinsic in the nature of arbitrary-level hanging node configurations and cannot be avoided.
}
To complete our results, we present a theoretical analysis that shows the new functions 
are continuous and linearly independent, and therefore form a basis for their spanning set.

In our approach, all the burden of dealing with the hanging nodes is embedded 
in building a projection operator between the original discontinuous space and the continuous one. 
This approach allows the implementation of the proposed method on existing finite element codes with very few modifications, 
and it has been implemented in FEMuS \cite{femus-web-page}, 
an open-source finite element C++ library built on top of
PETSc \cite{petsc-web-page}.
All our theoretical investigation is carried out in a multilevel setting.
The construction of nested continuous finite element spaces arising from the proposed continuous basis functions is also discussed.
These finite element spaces and corresponding projection operators are well suited for multigrid methods.
The continuity of our finite element spaces allows the multigrid level smoothing to be performed on
all degrees of freedom. This global smoothing guarantees an arbitrary improvement in the convergence properties
that is directly proportional to the number of smoothing iterations. 
Such a feature cannot be achieved when the smoothing is performed only 
on a subspace of the multigrid spaces, as it is done in many multigrid strategies 
for local refinement \cite{bramble1993new, bramble1991convergence, janssen2011adaptive}.

The paper is structured as follows. In Section \ref{Formulation}, the method for the construction of continuous basis functions is laid out. We introduce the inter-space operators for our multigrid algorithm, and describe the implementation of 
a solver that makes use of the continuous finite element spaces here defined.
In Section \ref{eigAnalysis}, we compute the spectral radius of the error operator associated with 
the multigrid algorithm applied to the bilinear form arising from Poisson equation. A comparison is made with a multigrid algorithm that carries out smoothing only on subspaces \cite{bramble1991convergence}.
This comparison shows better convergence properties for our algorithm for either fixed or increasing number of smoothing iterations. In Sections \ref{numex1} and \ref{numex2}, 
linear and nonlinear numerical experiments are shown, where our multigrid
algorithm is used as a preconditioner for other linear solvers.
In particular, the 2D Poisson problem and the 3D buoyancy driven flow are used as numerical tests
to showcase the suitability and broad range of applicability of our multigrid to multiphysics problems.

\section{Formulation}\label{Formulation}
In this section, the modified basis functions that define the finite element spaces involved in the formulation of the problem are described. We introduce the inter-space operators used in the multigrid context, and summarize the steps of a solver
based on the proposed theory.
\subsection{Preliminaries}
Let $J$ be a non-negative integer and let $k = 0, \ldots, J$. 
\new{Here, $J$ represents the maximum degree of local refinement, while $k$ refers to a generic refined level.}
Let $\Omega$ be a closed and bounded subset of $\mathbb{R}^n$, for $n=1,2,3$ and let $\mathcal T_0$ 
be a regular triangulation of size $h_0$ on $\Omega$. 
\new{With a slight abuse of terminology, we use the word triangulation even for grids that are not composed of triangular elements. As a matter of fact, many element types other than triangles are considered in the numerical simulations.
Let $\{\Theta_k\}_{k=0}^{J}$ be a collection of closed overlapping subsets of $\Omega$, such that
$$ \Theta_J \subseteq \Theta_{J-1} \subseteq \cdots \subseteq \Theta_0 \equiv \Omega.$$
 For any given $k = 1, \ldots, J$, we can define recursively an irregular triangulation $\mathcal{T}_k$ on $\Omega$
starting from the triangulation  $\mathcal{T}_{k-1}$ in the following way.
\begin{definition}\label{defT_k}
Let $\mathcal{T}_{0}$ be a quasi--uniform triangulation built on $\Theta_0$. Then, for $k = 1, \ldots, J$,
$\mathcal{T}_{k}$ is obtained by midpoint refinement of only the elements of $\mathcal{T}_{k-1}$
that lie on $\Theta_{k}$.
\end{definition}
We next define new sequences of subsets of $\Omega$.
These are introduced to set our formulation in a multilevel framework,
in order to easily apply our analysis to multigrid methods.
\begin{definition}\label{Omegadownup}
For all $l = 0,\dots, k$, with $k \le J$, we define the sets
$${\Omega}_k^l \equiv \left\{ 
\begin{array}{l l}
\overline{\Theta_l \setminus \Theta_{l+1}} & \mbox{ if } l < k \\
\Theta_k &\mbox{ if } l = k
\end{array}
\right. ,$$
and $\Gamma_k^{l,n} \equiv \Omega_k^l \cap \Omega_k^n$, to be the interface between any two of such sets .
 \end{definition}
To summarize, $J$ denotes the maximum degree of refinement, $k$ refers to the current refinement level considered,
while $l$ identifies the different subdomains of $\Omega$ at the current level $k$.
 \begin{definition}\label{Omegadowndown}
For all $l = 0,\dots, k$, with $k \le J$ define
$$\Omega_{k,0} \equiv \Omega^0_k, \qquad {\Omega}_{k,l} \equiv \bigcup_{m=0}^l \Omega^m_k, \qquad \Omega_{k} \equiv \Omega _{k,k}.$$
\end{definition}
\begin{remark}\label{Omega_nested}
Note that $\Omega _{k} = \Omega$ and, by definition, $\{{\Omega}_{k,l}\}_{l=0}^k$ is a sequence of nested sets.
This sequence will be fundamental in the analysis that follows to define the discontinuous basis functions used
for the construction of the continuous ones.
\end{remark}
By construction of $\mathcal{T}_k$, we have that, for a given $k$ level, $k+1$ regular triangulations are presented on $k+1$ different subsets of $\Omega$.
We formalize this in the following definition.
\begin{definition}
 For all $l=0, \ldots, k$, with $k \leq J$ we define $\mathcal{T}^l_k$ to be the regular triangulation composed of all the elements $T \in \mathcal{T}_k$ such that $T \cap int(\Omega_k^l) \neq \emptyset$.
\end{definition}
From this definition we have that $\mathcal{T}^l_k$ is a triangulation defined only on $\Omega_k^l$ and not on the entire domain
$\Omega$ as $\mathcal{T}_k$.
Triangulations can be defined on $\Omega_{k,l} \subseteq \Omega$ in the following way.
\begin{definition}
For all $l=0, \ldots, k$, with $k \leq J$, define
$$\mathcal{T}_{k,0} \equiv \mathcal{T}^0_k, \qquad \mathcal{T}_{k,l} \equiv \bigcup_{m=0}^l \mathcal{T}^m_k.$$
Then $\mathcal{T}_{k,l}$ is an irregular triangulation on $\Omega_{k,l}$, for all $l=1,\ldots,k$.
\end{definition}
Note that $\mathcal{T}_{k,k}=\mathcal{T}_{k}$.
Moreover, the above definition is consistent with Definition \ref{Omegadowndown}.
}

Due to the local midpoint refinement procedure, hanging nodes are present on  $\mathcal{T}_k$, for $k=1, \ldots, J$.
These are special nodes of
some element $T_i \in \mathcal{T}_k$ that
lie on edges (or faces) of another element $T_j \in \mathcal{T}_k$ without being nodes for $T_j$.
For example, as it can be seen from Figure \ref{irregular_grid}, where linear elements are used, 
the hanging nodes are marked with black squares.
\begin{figure}[h]
\centering
\includegraphics[scale=0.32]{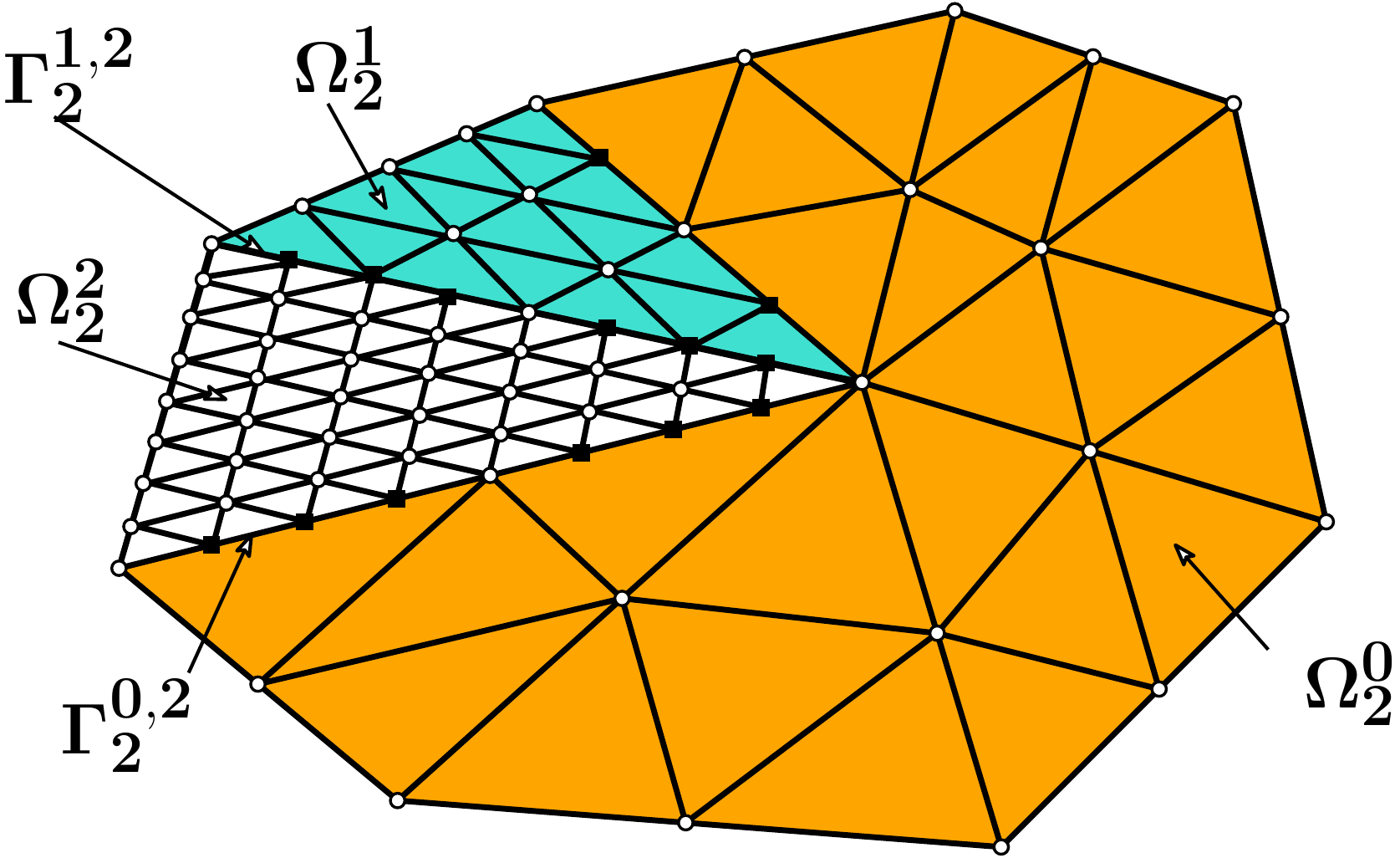}
\hspace{0.5cm}
\includegraphics[scale=0.32]{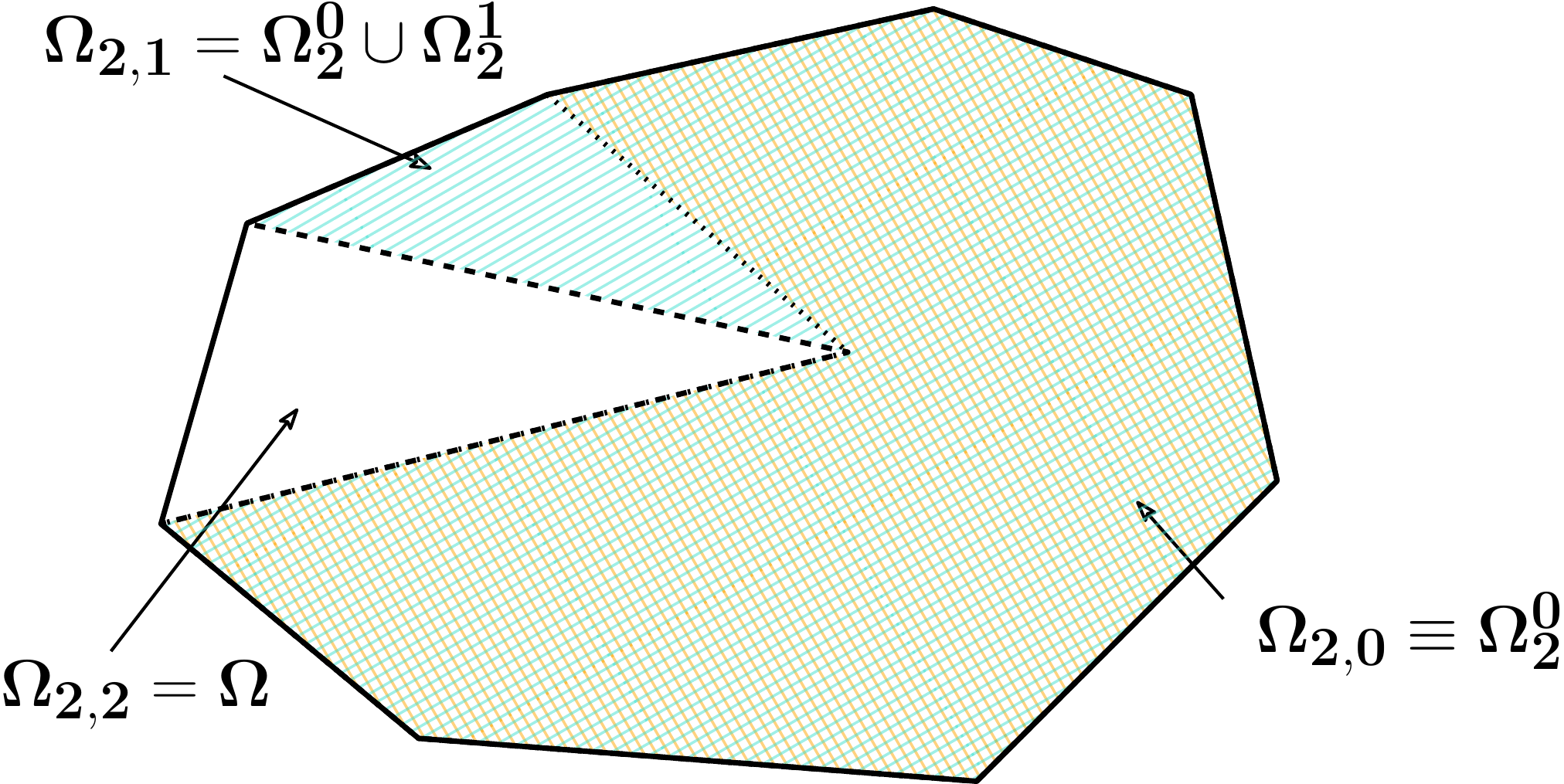}
\caption{\new{An irregular grid with hanging nodes in $2$D with $J=2$. The subdomains involved in the formulation are also highlighted}}\label{irregular_grid}
\end{figure}

Using the triangulations we defined above, for all $k=0, \dots, J$, we can define finite element spaces in the following way: 
\begin{align}\label{fem_spaces}
V^l_k &= \{ v \in H^1(\Omega^l_k) : v|_{T} \in \mathcal{P}_{\alpha}, \, \,  \forall \,T \in \mathcal{T}^l_k \},
\quad l=0, \dots, k, \\
V_k &= \{ v \in H^1(\Omega) : 
v|_{\new{int(T)}} \in \mathcal{P}_{\alpha}, \, \,  \forall \,T \in \mathcal{T}_k \} , \nonumber \\
\widehat{V}_k &= \{ \widehat v \in H^1(\Omega) \cap C^0(\Omega) : 
\widehat v|_{T} \in \mathcal{P}_{\alpha}, \, \,  \forall \,T \in \mathcal{T}_k  \} , \nonumber
\end{align}
where $\mathcal{P}_{\alpha}$ represents the set of polynomials of degree $\alpha$.
Note that the spaces $V^l_k$ are continuous by construction since the functions are defined on regular grids.
The space $V_k$ is built on an irregular triangulation and contains continuous and discontinuous functions. 
For the space $\widehat{V}_k$, we enforce continuity by removing from $V_k$ the hanging nodal functions, and enriching specific basis functions with contributions from the removed ones. 
An explicit characterization of such finite element spaces will be discussed in the next sections.

\subsection{Construction of the uniform finite element spaces $\mathbf{V_k^l}$}
Let  $X_k^l$ be the set of nodal points of the triangulation $\mathcal{T}^l_k$, 
$X_{k,l} = \cup_{m=0}^l X_k^m$ be the set of nodal points of $\mathcal{T}_{k,l}$
and $X_k = X_{k,k} $ be the set of nodal points of $\mathcal{T}_k$.
Denote with $X \equiv X_J$ the set of nodal points of the triangulation $\mathcal{T}_J$. 
Also, let $card(X_k) = N_k$ and $card(X) = N$. Depending on $\alpha$,
a nodal point could be a vertex, an edge mid-point, a face center, an element center, and more.
\begin{remark}
 There exist points $x_i \in X_k$ for which $x_i \in \bigcap\limits_{l \in A_{i,k}} X^l_k$, for some
 set $A_{i,k} \subseteq \{0,\ldots,k \}$. 
\end{remark}
This means that there are nodal points that belong to different $X^l_k$, since by Definition \ref{Omegadownup}, the $\Omega_k^l$ are allowed to overlap on portions of their boundaries.
\begin{definition}
 We define  $\varphi_{k,i}^l$ to be the standard Lagrangian nodal basis of $V^l_k$ associated with $x_i^l \in X^l_k$.
\end{definition}
Three considerations about the functions $\varphi^l_{k,i}$ are listed:
\begin{itemize}[leftmargin= * ]
\item The functions $\varphi^l_{k,i}$ satisfy the delta property
\begin{align}\label{delta_property}
\varphi_{k,i}^l(x_j^l)=\delta_{ij}\, \mbox{ for all } x_j^l\in X^l_k.
\end{align}
and they are not defined outside $\Omega_k^l$.

\item There exist basis functions $\varphi_{k,i}^m$ and $\varphi_{k,i}^n$,
defined on different subdomains $\Omega^m$ and $\Omega^n$ associated with the same nodal
point, $x_i^m = x_i^n = x_i \in \Gamma_k^{m,n}$ (the \textit{master} node). 
The two functions take different values on the shared interface $\Gamma_k^{m,n}$ and if $m<n$, then $supp(\varphi_{k,i}^n|_{\Gamma_k^{m,n}}) \subset supp(\varphi_{k,i}^m|_{\Gamma_k^{m,n}})$. 
\item Let $m<n$, there exist basis functions $\varphi_{k,i}^n$ defined on the finer triangulation associated with nodal points $x_i \in \Gamma_k^{m,n} $ (the hanging nodes)
 for which no corresponding functions $\varphi_{k,i}^m$ defined on the coarse triangulation exist.
\end{itemize}

\begin{remark}\label{disjointness}
 If for any given $m$ and $n$ the point $x_i \in X_k$ belongs to $\Gamma_k^{m,n}$, then it is either a master or a hanging node.
 Nodes in $X_k$ that are neither master nor hanging will be referred to as interior nodes.
\end{remark}

\subsection{Construction of the discontinuous finite element space $\mathbf{V_k}$}
The next goal is to build basis functions $\varphi_{k,i}$ on the whole domain $\Omega$, 
that in the interior of each subdomain $\Omega_k^l $ take the corresponding value of $\varphi^l_{k,i}$, 
and on a shared interface $\Gamma_k^{m,n}$ take the value from the coarser triangulation.
Assuming $\varphi^m_{k,i}$ is the function associated with the coarser triangulation, such a value is given by $\varphi^m_{k,i}$,
if $x_i$ is a master node, and it is zero if $x_i$ is a hanging node. 
\begin{definition}\label{Ek}
For $k \leq J$ and $l=1,\ldots,k$ we define the sets 
\begin{align}
\mathcal{E}_k^l \equiv \Omega_{k,l} \setminus \Omega_{k,l-1},
\end{align}
and $\mathcal{E}_k^0 \equiv \Omega_{k,0}$.
\end{definition}

Note that $\mathcal{E}_k^l \neq \Omega_{k}^{l}$ in general, 
since the sets $\{\Omega_{k}^{l}\}_{l=0}^k$ in Definition \ref{Omegadownup} are not disjoint. 
Any point $x \in \mathcal{E}_k^l$ is either in the interior of $\Omega^l_k$ 
or on the shared interface $\Gamma_k^{l,n}$, with $n>l$, 
but it is not on the shared interface $\Gamma_k^{l,m}$, with $m<l$.
It follows from Definition \ref{Ek} that  $\{\mathcal{E}_k^l\}_{l=0}^k$ is a pairwise disjoint collection of subsets of $\Omega$ and
\begin{align}
 \Omega = \bigcup\limits_{l=0}^k \mathcal{E}_k^l.
\end{align}
Therefore,  $\{\mathcal{E}_k^l\}_{l=0}^k$ represents a disjoint cover of $\Omega$, and so
if $x \in \Omega$, then there exists a unique $\gamma \in \{ 0,1,\ldots,k\}$ such that $x \in \mathcal{E}_k^{\gamma}$.
\begin{definition}\label{alg1}
For all $k=0, \ldots, J$ and $x_i \in X_k$, the functions $\varphi_{k,i}$ 
are defined for all $x$ in $\Omega$ in the following way:
 
 $$ \varphi_{k,i}(x) = 
 \left\{
 \begin{array}{l l}
 \varphi^{\gamma}_{k,i}(x) & \mbox{if }x_i \in X_k^{\gamma} \\ 
 0 & \mbox{otherwise}
 \end{array}
\right.,$$
where $\gamma$ is the unique integer for which $x \in \mathcal{E}_k^{\gamma}$.
\end{definition}
Let us highlight the main features of the functions $\varphi_{k,i}$: 
\begin{itemize} [leftmargin= * ]
\item The functions $\varphi_{k,i}$ can be either continuous or discontinuous and are defined on all $\Omega$.
\item $\varphi_{k,i}$ is continuous if and only if its nodal point is located in the interior 
of some $\Omega_k^l$ (where there is a regular triangulation $\mathcal T_k^l$) or on the exterior boundary if surrounded by  a regular triangulation.
When $\varphi_{k,i}$ is continuous, it can be considered the zero extension of $\varphi^l_{k,i}$ to the whole domain, 
and it satisfies the delta property
\begin{equation}
\varphi_{k,i}(x_j)=\delta_{ij}\, \mbox{ for all } x_j\in X_k \label{delta_property2}.
\end{equation}
\item $\varphi_{k,i}$ is discontinuous if and only if its nodal point is either 
a master or a hanging node.
\item A function $\varphi_{k,i}$
associated with a master node does not satisfy the delta property \eqref{delta_property2} 
for some $x_j$, with $x_j$ being a hanging node.
\item A function $\varphi_{k,i}$
associated with a hanging node does not satisfy the delta property \eqref{delta_property2},
but instead it satisfies the zero property
\begin{equation}
\varphi_{k,i}(x_j)=0 \, \mbox{ for all } x_j\in X_k. \label{zero_property}
\end{equation}
\item
Let $x_j \in X_k$, hence there exists at least one $X_k^l$ for which $x_j \in X_k^l$.
For all $x_i \in X_k$, we have
\begin{align}\label{lim_delta}
 \lim\limits_{x\in \mathcal {E}_k^{l} \rightarrow x_j} {\varphi}_{k,i}(x) = \delta_{ij}.
\end{align}
In fact, by Definition \ref{alg1}, on $\mathcal {E}_k^{l}$ 
the function ${\varphi}_{k,i}$ is either zero or equal the function $\varphi^{l}_{k,i}$ which is continuous and satisfies the delta property
\eqref{delta_property}.
\end{itemize}
We now have to introduce a more complex idea than the concept of master and hanging nodes.
\begin{definition}
A node $x_i \in X_k$ is a {\it{parent}} of a node $x_j \neq x_i$, $x_j \in X_k$ if 
$$\varphi_{k,i}(x_j) \neq 0.$$
The node $x_j$ is called a {\it{child}} of $x_i$.
\end{definition}
Parent nodes are master nodes while child nodes are hanging nodes.
If a parent node $x_i$ has a child $x_j$ which itself has a child $x_n$, then
$x_i$ is a {\it{grandparent}} of $x_n$ and $x_n$ is a  {\it{grandchild}} of $x_i$.
Note that being both a child and a parent, $x_j$ is both a master and a hanging node.
Hence $\varphi_{k,j}(x_j)=0$, but $\varphi_{k,j}(x_n) \neq 0$.
Nodes that are both master and hanging ultimately are considered hanging nodes, as the following definition states.
\begin{definition}\label{fundSets}
Let $k \in \{0,1,\ldots,J\}$ be fixed and recall that $\Omega = \bigcup\limits_{l=0}^{k} \Omega_k^l$.
We define the following sets
\new{
\begin{align*}
\textit{Interior}&:& \,\mathcal{I}_k = \{ x_i \in X_k \,\, | \,\, x_i \in int(\Omega_k^l) \,\, \mbox{for some} \,\, l\},\\
\textit{Hanging}&:& \,\mathcal{H}_k = \{ x_i \in X_k \,\, | \,\, \varphi_{k,j}(x_i) \neq 0 \,\, \mbox{for at least one} \,\, x_j \neq x_i \},\\
\textit{Master}&:& \,\mathcal{M}_k = \{ x_i \in X_k \,\, | \,\, \varphi_{k,i}(x_j) \neq 0 \,\, \mbox{for at least one} \,\, x_j \in \mathcal{H}_k, \mbox{and } x_i \notin \mathcal{H}_k \}.
\end{align*}
}\end{definition}
The set $\mathcal{I}_k$ is the set of interior nodes at the level $k$.
We refer to $\mathcal{M}_k$ as the set of master nodes at the level $k$, and it is the set of all nodes that are parents without ever being children.
We call $\mathcal{H}_k$ the set of hanging nodes at level $k$, and it is the set of all nodes that are children at least once. It includes all the parents that are also children. Note that the sets in Definition \ref{fundSets} are disjoint and that $\mathcal{I}_k \cup \mathcal{M}_k \cup \mathcal{H}_k = X_k$, recall Remark \ref{disjointness}.
\new{
\begin{lemma}\label{phi_li}
 The set $\{ \varphi_{k,i} : x_i \in X_k\} $ is linearly independent for all $k=0, \ldots, J$.
\end{lemma}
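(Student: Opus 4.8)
The plan is to show directly that any vanishing linear combination $\sum_{x_i \in X_k} c_i \varphi_{k,i} = 0$ forces all coefficients $c_i$ to be zero, by evaluating the combination at the nodal points $x_j \in X_k$ and using the limit relation \eqref{lim_delta} together with the parent/child structure. The key difficulty is that the functions $\varphi_{k,i}$ are not interpolatory at every node — a function associated with a master node fails the delta property at its hanging children, and a function associated with a hanging node vanishes at all nodes — so plain evaluation at $x_j$ does not immediately decouple the coefficients.

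First I would exploit the disjoint cover $\Omega = \bigcup_{l=0}^k \mathcal{E}_k^l$ and the one-sided limit \eqref{lim_delta}. Fix a node $x_j \in X_k$. For each $l$ with $x_j \in X_k^l$, taking the limit along $\mathcal{E}_k^l \to x_j$ in the identity $\sum_i c_i \varphi_{k,i}(x) = 0$ gives, by \eqref{lim_delta}, that $c_j + \sum_{i \neq j} c_i \,\big(\lim_{x\in\mathcal{E}_k^l\to x_j}\varphi_{k,i}(x)\big) = 0$; but by \eqref{lim_delta} again each such limit is $\delta_{ij}$, hence it equals $0$ for $i \neq j$. Wait — this shows $c_j = 0$ for every $j$ in one stroke, which is almost too clean; the subtlety is whether \eqref{lim_delta} genuinely holds for every pair $(i,j)$ regardless of the parent/child relationship. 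Re-examining \eqref{lim_delta}: on $\mathcal{E}_k^l$ the function $\varphi_{k,i}$ is, by Definition \ref{alg1}, either identically zero (if $x_i \notin X_k^l$) or equal to $\varphi_{k,i}^l$, which is continuous on $\Omega_k^l$ and satisfies the genuine delta property \eqref{delta_property} among the points of $X_k^l$. Since $x_j \in X_k^l$ is such a point, the limit is $\varphi_{k,i}^l(x_j) = \delta_{ij}$ in the first case and $0$ in the second; in either case it equals $\delta_{ij}$. So the argument does go through, and the limit relation \eqref{lim_delta} is exactly the tool that bypasses the failure of the delta property for the discontinuous $\varphi_{k,i}$.

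Concretely, I would organize the proof as follows: (i) suppose $\sum_{x_i\in X_k} c_i\varphi_{k,i}=0$ on $\Omega$; (ii) fix an arbitrary $x_j\in X_k$ and pick any $l\in\{0,\ldots,k\}$ with $x_j\in X_k^l$ (such $l$ exists since $X_k=\bigcup_l X_k^l$); (iii) evaluate the limit of the combination as $x\to x_j$ with $x\in\mathcal{E}_k^l$, invoking \eqref{lim_delta} term by term — this is legitimate since the sum is finite — to obtain $\sum_i c_i\delta_{ij}=c_j=0$; (iv) conclude $c_j=0$ for all $j$, hence linear independence. The main obstacle is purely expository: one must be careful that $\mathcal{E}_k^l$ is nonempty near $x_j$ and that the limit along $\mathcal{E}_k^l$ is well-defined, i.e. that $x_j$ is genuinely a limit point of $\mathcal{E}_k^l$; this follows because $x_j\in X_k^l$ lies in the closure of $\Omega_k^l$ and, by the structure of the disjoint cover, $\mathcal{E}_k^l$ contains a neighborhood of $x_j$ within $\Omega_k^l$ (more precisely, $\mathcal{E}_k^l$ fails to approach $x_j$ only from the direction of interfaces $\Gamma_k^{l,m}$ with $m<l$, along which one simply does not take the limit). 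I would state this observation explicitly and then the computation in (iii) finishes the proof.
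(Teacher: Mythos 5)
Your proof is correct and follows essentially the same route as the paper: both arguments rest on the limit relation \eqref{lim_delta} taken along the set $\mathcal{E}_k^{l}$ toward a node $x_j \in X_k^{l}$, which substitutes for the missing delta property of the discontinuous functions $\varphi_{k,i}$. The only difference is cosmetic — you argue directly that every coefficient of a vanishing combination must be zero, while the paper phrases the same computation as a contradiction after writing one $\varphi_{k,j}$ as a combination of the others — and your explicit remark that $x_j$ is a limit point of $\mathcal{E}_k^{l}$ is a harmless expository addition to what the paper leaves implicit.
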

\begin{proof}
Let $\varphi_{k,j}$ be associated with $x_j \in X_k$ and assume by contradiction that 
\begin{equation} \label{contrad}
\varphi_{k,j}(x) = \sum\limits_{i: x_i \in X_k, x_i \neq x_j} c_i \varphi_{k,i}(x),
\end{equation}
where $c_i$ are some real coefficients, not all equal to zero.
Recalling Eq.~\eqref{lim_delta}, there exists at least one $X_k^l$ for which $x_j \in X_k^l$, and for all $x_i \in X_k$,
$$  \lim\limits_{x\in \mathcal {E}_k^{l} \rightarrow x_j} {\varphi}_{k,i}(x) = \delta_{ij}.$$
Hence, the following contradiction is obtained
$$ 1 = \lim\limits_{x\in \mathcal {E}_k^{l} \rightarrow x_j} \varphi_{k,j}(x) = 
\lim\limits_{x\in \mathcal {E}_k^{l} \rightarrow x_j} \sum\limits_{i: x_i \in X_k, x_i \neq x_j} c_i \varphi_{k,i}(x) = 0.
$$
\end{proof}
}
With the above lemma we can formally define the space $V_k$ first introduced in \eqref{fem_spaces}.
\begin{definition}\label{def1}
 We define $V_k$ to be the set spanned by $\{ \varphi_{k,i}\}_{i=1}^{N_k}$, namely 
 $V_k = span(\{ \varphi_{k,i}\}_{i=1}^{N_k})$.
 Because of Lemma \ref{phi_li}, it follows that $\{ \varphi_{k,i}\}_{i=1}^{N_k}$ also forms a basis for $V_k$,
 where $V_k$ is a vector space with the standard addition and scalar multiplication for real valued functions.
\end{definition}

\subsection{Construction of the continuous finite element space $\mathbf{\widehat V_k}$}
The next step is to build basis functions $\widehat\varphi_{k,i}$ on the whole domain $\Omega$ that are also continuous.
Let $x_i \in \mathcal{M}_k$ and define the set $\mathcal{H}^1_{x_i}$ as the set of all hanging nodes associated with node $x_i$.
Nodes in $\mathcal{H}^1_{x_i}$ are children of $x_i$.
With the same idea, we can define the following sets:
\begin{align*}
&\mathcal{H}^2_{x_i} = \{ x \, | \, x \in \mathcal{H}^1_{y} \,\, \mbox{for some} \,\, y \in \mathcal{H}^1_{x_i}\},\\
&\mathcal{H}^3_{x_i} = \{ x \, | \, x \in \mathcal{H}^1_{y} \,\, \mbox{for some} \,\, y \in \mathcal{H}^2_{x_i}\},\\[-1ex]
&\hspace{2cm}\vdots\\[-1ex]
&\mathcal{H}^{\alpha}_{x_i} = \{ x \, | \, x \in \mathcal{H}^1_{y} \,\, \mbox{for some} \,\, y \in \mathcal{H}^{\alpha-1}_{x_i}\}.
\end{align*}

The set $\mathcal{H}^2_{x_i}$ is the set of hanging nodes associated with nodes in $\mathcal{H}^1_{x_i}$, therefore any node in $\mathcal{H}^2_{x_i}$
is a {\it{grandchild}} of $x_i$.
The set $\mathcal{H}^3_{x_i}$ is the set of hanging nodes associated with nodes in $\mathcal{H}^2_{x_i}$ so it contains the {\it{great-grandchildren}} of $x_i$ 
and so on.
If $x \in \mathcal{H}^{\alpha}_{x_i}$ we say that $x$ is an hanging node associated with $x_i$ of {\it{degree}} at least $\alpha$.
Note that a given node can be a hanging node for more than just one master (a given node can be the child of more than one parent).
Therefore, for given $\alpha$, we introduce the sequence
$\{x_{i,\beta}^{\alpha}\}_{\beta=1}^{\beta^{max}_{\alpha}}$: this is the sequence of hanging nodes of degree $\alpha$ associated with the master 
node $x_i$ but considering {\it repetition}. Observe that $\{x_{i,\beta}^{\alpha}\}_{\beta=1}^{\beta^{max}_{\alpha}} \subseteq \mathcal{H}^{\alpha}_{x_i}$.
Moreover, we also point out that for $\alpha =1$ the elements of the sequence $\{x_{i,\beta}^{\alpha}\}_{\beta=1}^{\beta^{max}_{\alpha}}$ do not repeat and
if $\alpha=0$ there is only $x_{i,1}^{0} \equiv x_i$.
We define the sets
\begin{align*}
&\mathcal{H}_{x_i}^{1,1} \equiv \mathcal{H}_{x_i}^{1},\\
&\mathcal{H}_{x_i}^{\alpha,\beta} = \{ x \in \mathcal{H}_{x_i}^{\alpha} \, | \, x \in \mathcal{H}^1_{x_{i,\beta}^{\alpha-1}} \},\, \mbox{for} \,\, \alpha \geq 2 \,\, \mbox{and} \,\, \beta = 1, \ldots, \beta_{\alpha-1}^{max}.
\end{align*}
\begin{definition}
If $x_j \in \mathcal{H}_{x_i}^{\alpha,\beta}$ then there exist $\alpha+1$ nodes $x_{j_0},x_{j_1},x_{j_2},\ldots,x_{j_{\alpha}}$ (with $x_{j_0} \equiv x_i$ and $x_{j_{\alpha}} \equiv x_j$) 
that we refer to as the ancestors of $x_j$, such that
for all $m=0, \ldots, \alpha$, we have $x_{j_m} = x_{i,\beta_{j_m}}^m$ for some $\beta_{j_m} \in \{1,2,\ldots,\beta_m^{max}\}$.
Note that $\beta_{j_0} = 1$ always.
\end{definition}
The {\it ancestors} of $x_j$ depend on $\alpha$ and $\beta$.
However, not to make the notation too heavy, such letters do not appear
in the {\it ancestor} sequence.
Moreover, we remark that the only $m \in \{0,\ldots, \alpha \}$ for which $x_{j_{m}} \equiv x_j$ is $ m = \alpha$.

For $\alpha \geq 2$, and any $x_j \in \mathcal{H}_{x_i}^{\alpha,\beta}$, define the set
$$ U_{x_i, x_j}^{\alpha,\beta} =  \bigcup\limits_{m=1}^{\alpha-1} \mathcal{H}_{x_i}^{\alpha-m,\beta_{j_{\alpha-m-1}}}.$$
The above set represents the set of all {\it uncles} (parents included) and {\it great-uncles} (grandparents included) of $x_j \in \mathcal{H}_{x_i}^{\alpha,\beta}$.
Moreover, for $\alpha \geq 2$, we also define 
$$ S_{x_i, x_j}^{\alpha,\beta} = \{ x_{j_{\gamma}} \in \{ x_{j_m}\}_{m=2}^{\alpha} \,\, | \,\, x_{j_{\gamma}} \in U_{x_i,x_{j_{\gamma}}}^{\gamma,\beta_{j_{\gamma-1}}}\}, $$
where $x_{j_{\gamma}} \in \mathcal{H}_{x_i}^{\gamma,\beta_{j_{\gamma-1}}}$.
This is the set of all nodes in the {\it ancestor} sequence $\{ x_{j_m}\}_{m=1}^{\alpha}$ of $x_j$ that are {\it uncles} or {\it great-uncles} of {\it themselves}.
Finally, we introduce the set of all nodes in $\mathcal{H}_{x_i}^{\alpha,\beta}$ 
for which no ancestor is {\it uncle} or {\it great-uncle} of {\it itself}, namely
\begin{align*}
&\widetilde{\mathcal{H}}_{x_i}^{1,1} = \mathcal{H}_{x_i}^{1,1},\\
&\widetilde{\mathcal{H}}_{x_i}^{\alpha,\beta} = \{x_j \in \mathcal{H}_{x_i}^{\alpha,\beta} \,\, | S_{x_i, x_j}^{\alpha,\beta} = \emptyset\}, \,\, \mbox{for} \,\, \alpha \geq 2 \,\, \mbox{and} \,\, \beta = 1, \ldots, \beta_{\alpha-1}^{max}.
\end{align*}

Let $\beta_0^{max} \equiv 1$ and $\alpha_i^{max} = \max \{\alpha | \widetilde{\mathcal{H}}_{x_i}^{\alpha, \beta} \neq \emptyset, \,\, \mbox{with} \,\, \beta=1, \ldots, \beta_{\alpha-1}^{max} \}$.
\begin{definition}\label{alg2}
  For all $x_i \in X_k$, we define functions $\widehat\varphi_{k,i}$ using the previously introduced 
  functions $\{\varphi_{k,i}\}_{i=1}^{N_k}$ in the following way
$$ \widehat{\varphi}_{k,i} = 
 \left\{
 \begin{array}{l l}
 \varphi_{k,i} & \mbox{if} \,\, i \in \mathcal{I}_k \\ 
 \varphi_{k,i} + \sum\limits_{\alpha=1}^{\alpha_i^{max}} \sum\limits_{\beta=1}^{\beta^{max}_{\alpha-1}}\Big( \sum\limits_{j: x_j \in \mathcal{\widetilde{H}}^{\alpha,\beta}_{x_i} } \Big( \prod\limits_{m=1}^{\alpha} \varphi_{k,j_{m-1}}(x_{j_m})\Big) \varphi_{k,j} \Big) & \mbox{if} \,\, i \in \mathcal{M}_k \\
 0 & \mbox{if} \,\, i \in \mathcal{H}_k 
  \end{array}
\right..
  $$
\end{definition}
We reorder the basis functions just defined so that the interior nodes come first, the master nodes second and the hanging nodes last.
In matrix-vector form, the vector ${\widehat {\bm \varphi}_k}$ of the reordered $\widehat{\varphi}_{k,i}$ functions can be written as:
\begin{equation}
{\widehat {\bm \varphi}_k} = 
\left[\begin{array}{l}
 {\widehat {\bm \varphi}_{k,\mathcal{I}_k}}\\
 {\widehat {\bm \varphi}_{k,\mathcal{M}_k}}\\
 {\widehat {\bm \varphi}_{k,\mathcal{H}_k}}
\end{array}
\right]
=\left[ \begin{matrix}
I &  0 & 0\\
0 &  I & \Phi_k \\
0 &  0 & 0
\end{matrix}
\right]
\left[\begin{array}{l}
{{\bm \varphi}_{k,\mathcal{I}_k}}\\
{{\bm \varphi}_{k,\mathcal{M}_k}}\\
{{\bm \varphi}_{k,\mathcal{H}_k}}
\end{array}
\right]
=
\widehat R_k {\bm \varphi_k}. \label{matrix_vector}
\end{equation}
\begin{remark}
\new{
The hanging nodes are considered in the definition of the functions $\widehat{\varphi}_{k,i}$ to maintain the same dimension for vectors and matrices used in either $V_k$ or $\widehat V_k$.
In this way, an identity mapping between the degrees of freedom of $V_k$ and $\widehat V_k$ is preserved.
This is convenient for the implementation of the algorithm, especially in a parallel environment,
and, as it will be shown later, for extending the formulation to a multilevel setting.}
\end{remark}
Let us now consider two simple examples that do not fall in the case of 1-irregular meshes. 
These are illustrative to better understand the algorithm described in this section. Their generalization
to complex arbitrary-level hanging node meshes as the ones used in the next sections is straightforward.

We start with a two-dimensional example to visualize the construction of a function associated with a node
in $\mathcal{M}_k$, as described in Definition \ref{alg2}.
The triangulation considered in the example is depicted in Figure \ref{exampleTri}. This can be part of a larger triangulation
but for simplicity we just focus on the portion in the figure.
We consider bilinear triangular elements with a maximum of two level local refinement (so $J=2$) and 
focus on the master node $x_1 \in \mathcal{M}_2$.

\begin{figure}[!b]
\begin{center}
\begin{minipage}[t]{0.5\linewidth}
\centering
\vspace{-1.8cm}\includegraphics[scale=0.45]{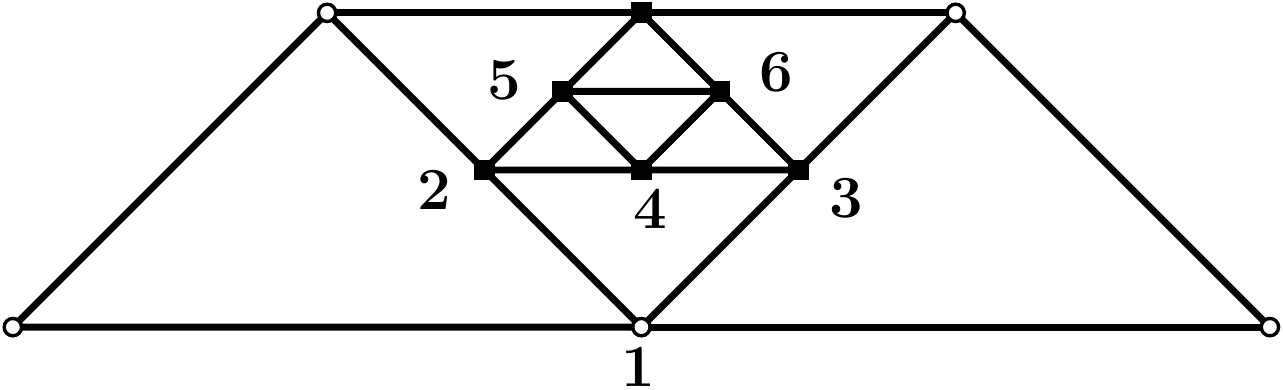}
\caption{Example of a two-dimensional bilinear grid with a two level local refinement.}\label{exampleTri}
\end{minipage} 
\hspace{0.125in}
\begin{minipage}[t]{0.4\linewidth}
\centering
\includegraphics[scale=0.5]{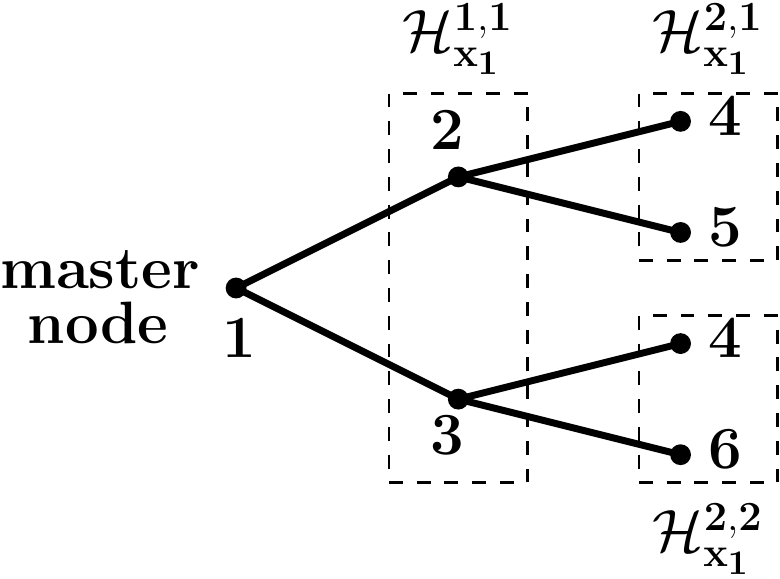}
\caption{Tree diagram that displays the family tree of node $x_1$.}\label{treeTri}
\end{minipage}
\end{center}
\end{figure}

\begin{figure}[!b]
\begin{center}
\begin{minipage}[t]{0.52\linewidth}
\centering
\includegraphics[scale=0.4]{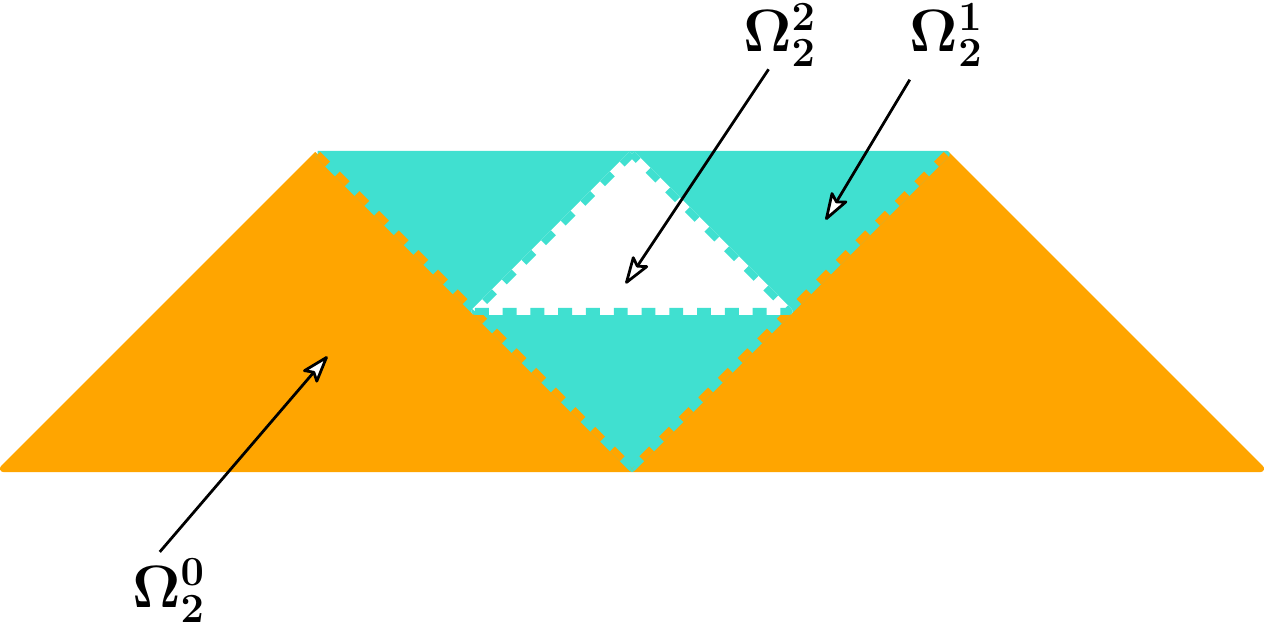}
\caption{Subdomains associated with the different degrees of refinement of the grid in Figure \ref{exampleTri}.}\label{exampleTriOmegas}
\end{minipage} 
\hspace{0.125in}
\begin{minipage}[t]{0.4\linewidth}
\centering
\includegraphics[scale=0.4]{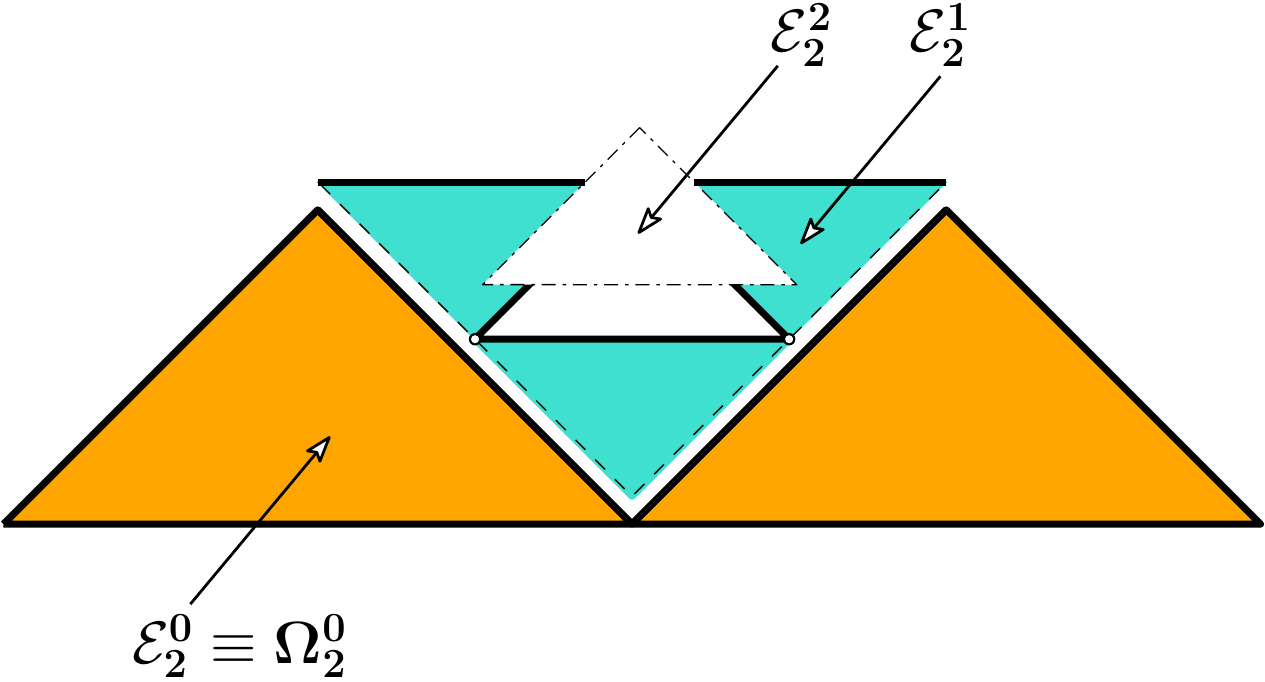}
\vspace{-0.4cm}\caption{Collection $\{\mathcal{E}_2^l\}_{l=0}^{2}$ associated with the grid in Figure \ref{exampleTri}.}\label{exampleTriE}
\end{minipage}
\end{center}
\end{figure}


Let us start describing the sets and sequences involved in Definition \ref{alg2} for this specific example.
\begin{align*}
& \mathcal{H}_{x_1}^1 = \{x_2,x_3\}, \quad \mathcal{H}_{x_1}^2 = \{x_4, x_5, x_6\}, \\
& \{x_{1,\beta}^{1}\}_{\beta=1}^{2} = \{ x_2, \, x_3\},\qquad
 \{x_{1,\beta}^{2}\}_{\beta=1}^{4} = \{ x_4, \, x_5, \, x_4, \,x_6\},\\
&\widetilde{\mathcal{H}}_{x_1}^{1,1} = \mathcal{H}_{x_1}^{1,1} =\{ x_2, x_3\},\quad
\widetilde{\mathcal{H}}_{x_1}^{2,1} = \mathcal{H}_{x_1}^{2,1} = \{x_4, x_5\}, \quad \widetilde{\mathcal{H}}_{x_1}^{2,2} = \mathcal{H}_{x_1}^{2,2} = \{x_4, x_6 \}.
\end{align*} 
Note that, in this case, there are no hanging nodes that are uncles or great-uncles of themselves.
In Figure \ref{treeTri}, the family tree associated with node $x_1$ is reported. \new{ 
\begin{remark}
In the construction of the tree structure of the mesh, the child of a master node is
always located on the same edge (or face) of the parent. Only when considering grandchildren,
great-grandchildren and so on, the search extends to neighboring nodes. This situation is embedded in
the nature of arbitrary-level hanging nodes, and it cannot be avoided \cite{vsolin2008arbitrary,vsolin2004goal}.
For such a reason, an exclusive edge-by-edge approach is not possible in case
of arbitrary-level hanging nodes.
The search for the children, however, remains localized on the support of the coarse
master node basis, so the proposed approach is still local.
\end{remark}
}
Using Definition \ref{alg2} we obtain 
\begin{align*}
  \widehat\varphi_{k,1} &= \varphi_{k,1} + \varphi_{k,1}(x_2) \varphi_{k,2} + \varphi_{k,1}(x_3) \varphi_{k,3} + \varphi_{k,1}(x_2)\varphi_{k,2}(x_4)\varphi_{k,4} \\
 & + \varphi_{k,1}(x_2)\varphi_{k,2}(x_5)\varphi_{k,5} + \varphi_{k,1}(x_3)\varphi_{k,3}(x_4)\varphi_{k,4} +\varphi_{k,1}(x_3)\varphi_{k,3}(x_6)\varphi_{k,6} \\
 & = \varphi_{k,1} + 0.5 \, \varphi_{k,2} + 0.5 \, \varphi_{k,3} + 0.5  \cdot  0.5 \, \varphi_{k,4} + 0.5 \cdot 0.5 \, \varphi_{k,5}  \\
 & + 0.5 \cdot 0.5 \, \varphi_{k,4} + 0.5 \cdot 0.5 \, \varphi_{k,6} \\
 & = \varphi_{k,1} + 0.5 \, \varphi_{k,2} + 0.5 \, \varphi_{k,3} + 0.25\, \varphi_{k,4} + 0.25 \, \varphi_{k,5} + 0.25\varphi_{k,4} + 0.25\varphi_{k,6} \\
 & = \varphi_{k,1} + 0.5 \, \varphi_{k,2} + 0.5 \, \varphi_{k,3} + 0.5\, \varphi_{k,4} + 0.25 \, \varphi_{k,5} + 0.25\varphi_{k,6},\\
 \widehat\varphi_{k,2} &\equiv 0,\quad
 \widehat\varphi_{k,3} \equiv 0, \quad
 \widehat\varphi_{k,4} \equiv 0, \quad
 \widehat\varphi_{k,5} \equiv 0,  \quad
 \widehat\varphi_{k,6} \equiv 0. 
\end{align*}
The matrix-vector form corresponding to Eq.~\eqref{matrix_vector} is given by:
\begin{equation*}
\left[\begin{array}{l}
 \widehat\varphi_{k,1}\\
 \widehat\varphi_{k,2}\\
 \widehat\varphi_{k,3}\\
 \widehat\varphi_{k,4}\\
 \widehat\varphi_{k,5} \\
 \widehat\varphi_{k,6}
\end{array}
\right]
=\left[ \begin{matrix}
1 & 0.5 & 0.5   &0.5 & 0.25 &0.25\\
0 & 0 & 0 & 0 & 0 &0\\
0 & 0 & 0   &0 &0 &0\\
0 & 0 & 0 &0 &0 &0\\
0 & 0 & 0 &0 &0 &0 \\
0 & 0 & 0 &0 &0 &0
\end{matrix}
\right]
\left[\begin{array}{l}
 \varphi_{k,1}\\
 \varphi_{k,2}\\
 \varphi_{k,3}\\
 \varphi_{k,4}\\
 \varphi_{k,5} \\
 \varphi_{k,6}
\end{array}\right]. 
\end{equation*}

We continue with a three-dimensional example of an L-shaped domain, visible in Figure \ref{example3D}.
Once again the maximum degree of local refinement is two, so $J=2$, and we only focus on the node $x_1 \in \mathcal{M}_2$.
We present this specific example because the sets $\mathcal{H}_{x_1}^{2,1}$
and $\widetilde{\mathcal{H}}_{x_1}^{2,1}$ are different in this case and so
it will be clear why we consider the sets $\widetilde{\mathcal{H}}_{x_1}^{2,1}$ instead of $\mathcal{H}_{x_1}^{2,1}$
in Definition \ref{alg2}.
In Figures \ref{example3DOmegas} and \ref{example3DE} the subdomains associated with the different
degrees of refinement are shown together with the collection $\{\mathcal{E}_2^l\}_{l=0}^{2}$.
Since the goal of this example is to illustrate a situation where 
the sets $\mathcal{H}_{x_1}^{2,1}$ and $\widetilde{\mathcal{H}}_{x_1}^{2,1}$ are different,
we only consider the contributions given by the numbered nodes in Figure \ref{example3D}.

\begin{figure}[!t]
\begin{center}
\begin{minipage}[t]{0.5\linewidth}
\centering
\includegraphics[scale=0.4]{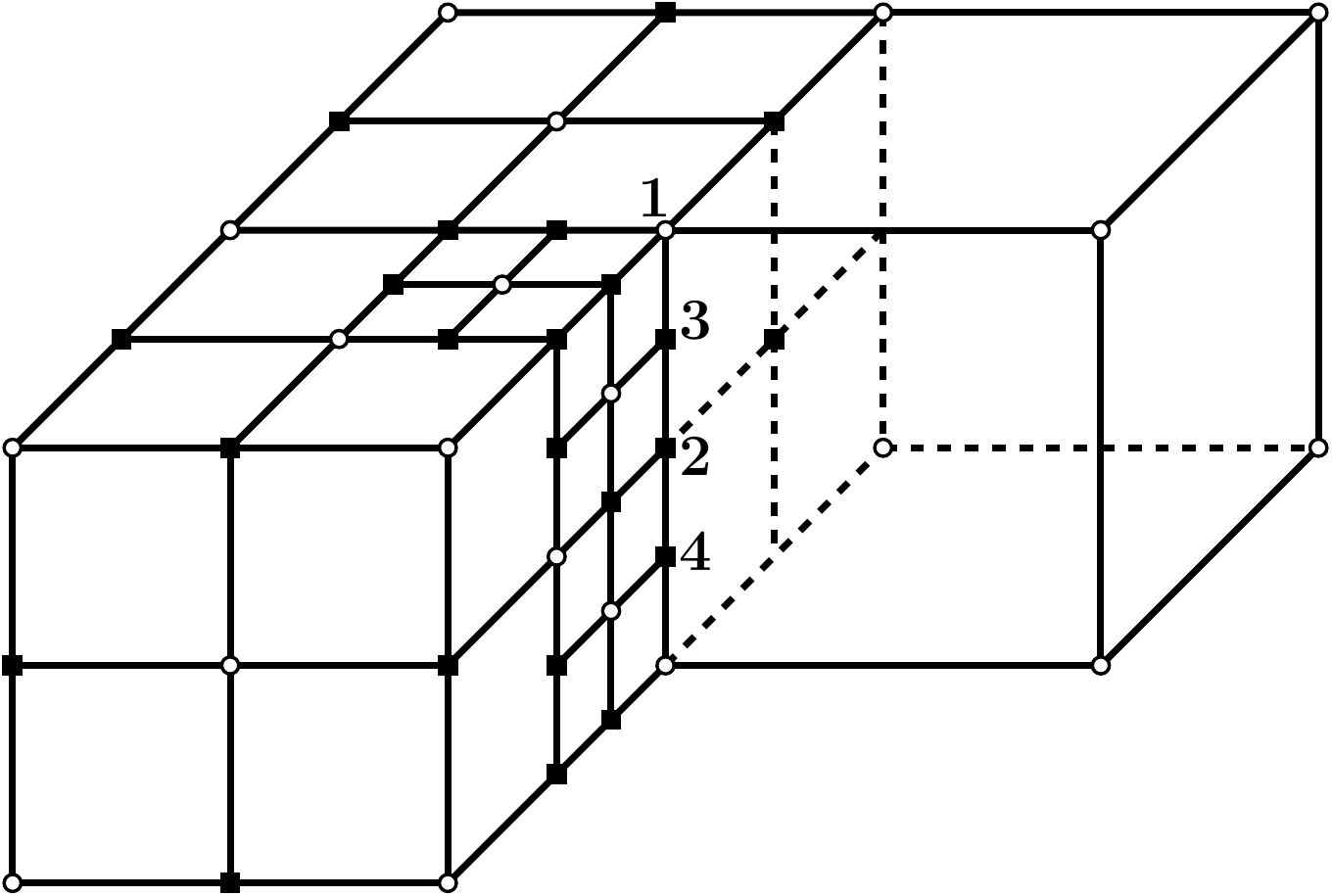}
\caption{Example of three-dimensional bilinear grid on an L-shaped domain with a three level local refinement.}\label{example3D}
\end{minipage} 
\hspace{0.125in}
\begin{minipage}[t]{0.4\linewidth}
\centering
\includegraphics[scale=0.5]{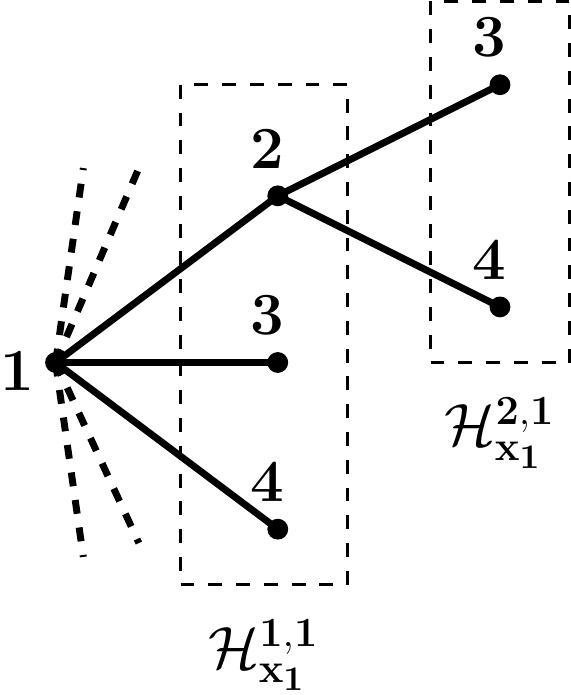}
\caption{Tree diagram that displays the family tree of node $x_1$.}\label{example3Dtree}
\end{minipage}
\end{center}
\end{figure}
\begin{figure}[!b]
\begin{center}
\begin{minipage}[t]{0.52\linewidth}
\centering
\includegraphics[scale=0.3]{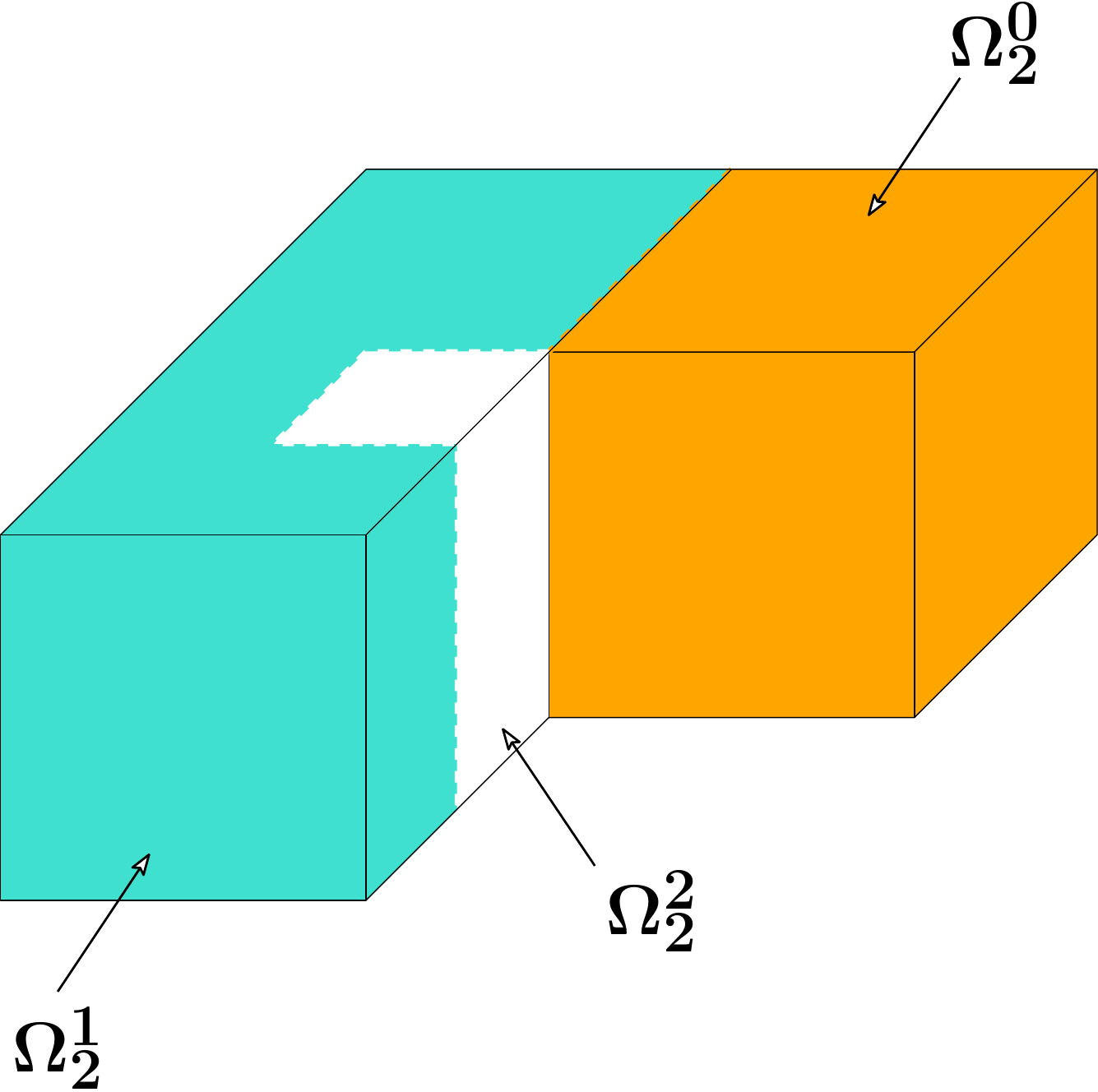}
\vspace{-0.25cm}\caption{Subdomains associated with the different degrees of refinement of the grid in Figure \ref{example3D}.}\label{example3DOmegas}
\end{minipage} 
\hspace{0.125in}
\begin{minipage}[t]{0.4\linewidth}
\centering
\vspace{-3.4cm} \includegraphics[scale=0.3]{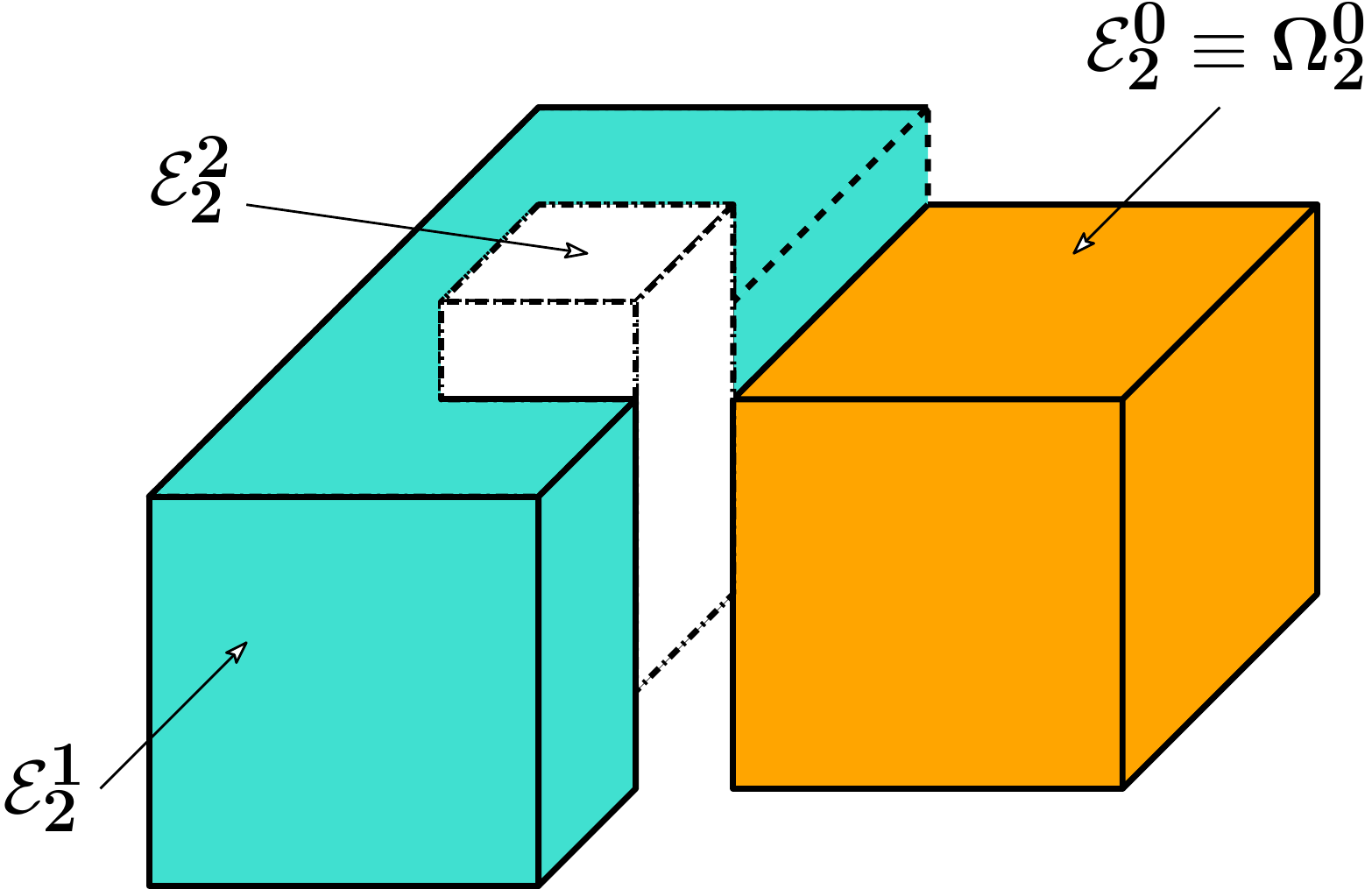}
\caption{Collection $\{\mathcal{E}_2^l\}_{l=0}^{2}$ associated with the grid in Figure \ref{example3D}.}\label{example3DE}
\end{minipage}
\end{center}
\end{figure}

Let \new{us} describe the sets and sequences involved in Definition \ref{alg2} for this three-dimensional example.
\begin{align*}
 & \mathcal{H}_{x_1}^1 = \{x_2,x_3, x_4\}, \quad \mathcal{H}_{x_1}^2 = \{x_3, x_4\},\\
 & \{x_{1,\beta}^{1}\}_{\beta=1}^{3} = \{ x_2, \, x_3, \, x_4\},\quad \{x_{1,\beta}^{2}\}_{\beta=1}^{2} = \{ x_3, \, x_4\},\\
 & \widetilde{\mathcal{H}}_{x_1}^{1,1} = \mathcal{H}_{x_1}^{1,1} =\{ x_2, x_3, x_4\}, \quad \widetilde{\mathcal{H}}_{x_1}^{2,1} = \emptyset, \quad \mathcal{H}_{x_1}^{2,1} = \{ x_3, \, x_4\}. 
\end{align*}
The reason why $\widetilde{\mathcal{H}}_{x_1}^{2,1} = \emptyset$ is because the nodes $x_3$ and $x_4$ are uncles of themselves, so they
are removed from $\mathcal{H}_{x_1}^{2,1}$. However, since this set is composed only of such nodes, it becomes the empty set.
The family tree of node $x_1$ is visible in Figure \ref{example3Dtree}. 
Using Definition \ref{alg2} we obtain
\begin{align*}
  \widehat\varphi_{k,1} &= \varphi_{k,1} + \varphi_{k,1}(x_2) \varphi_{k,2} + \varphi_{k,1}(x_3) \varphi_{k,3} + \varphi_{k,1}(x_4) \varphi_{k,4} + f_{k,1} \\
                        &= \varphi_{k,1} + 0.5 \, \varphi_{k,2} + 0.75 \, \varphi_{k,3} + 0.25 \, \varphi_{k,4} + f_{k,1},\\
 \widehat\varphi_{k,2} &\equiv 0,\quad  \widehat\varphi_{k,3} \equiv 0, \quad \widehat\varphi_{k,4} \equiv 0,
\end{align*}
where $f_{k,i}$ is a function that gives the contributions of the other hanging nodes associated with $x_1$ that have not been
made explicit in this example.
\begin{remark}
In these examples we only addressed bilinear and trilinear elements for simplicity. The same method applies
to all Lagrangian elements of any polynomial degree satisfying the delta properties. 
In the numerical example section we used bi/trilinear, quadratic, bi/triquadratic elements, and combinations of them.
\end{remark}
\begin{theorem}
The functions $ \widehat \varphi_{k,i} $ are continuous for all $x_i \in X_k$.
\end{theorem}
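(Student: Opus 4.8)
The plan is to analyze the three cases of Definition~\ref{alg2} separately and reduce each to the continuity of the already-understood functions $\varphi_{k,i}$. For the interior case, $\widehat\varphi_{k,i} = \varphi_{k,i}$ with $x_i \in \mathcal{I}_k$, and the second bullet following Definition~\ref{alg1} already states that $\varphi_{k,i}$ is continuous precisely when its nodal point lies in the interior of some $\Omega_k^l$ (or on the exterior boundary surrounded by a regular triangulation); so there is nothing to prove. The hanging case is trivial since $\widehat\varphi_{k,i} \equiv 0$. The entire content of the theorem therefore lies in the master case, where $\widehat\varphi_{k,i} = \varphi_{k,i} + \sum_{\alpha,\beta}\sum_{j}\big(\prod_{m=1}^{\alpha}\varphi_{k,j_{m-1}}(x_{j_m})\big)\varphi_{k,j}$.

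First I would reduce to a \emph{local} question: a function on $\Omega$ built from piecewise polynomials is continuous iff it is continuous across every shared interface $\Gamma_k^{m,n}$ between adjacent subdomains, and since each $\varphi_{k,i}$ is separately continuous in every open $\mathcal{E}_k^\gamma$, it suffices to check matching of one-sided limits along each such interface. Fix an interface $\Gamma_k^{m,n}$ with $m<n$ (coarse side $m$, fine side $n$). On the coarse side, only $\varphi_{k,i}$ itself and contributions from \emph{master} nodes $x_i$ whose coarse basis function is supported there are nonzero; on the fine side, the relevant hanging-node functions $\varphi_{k,j}$ live. The key algebraic identity to establish is that, restricted to $\Gamma_k^{m,n}$, the enriched function $\widehat\varphi_{k,i}$ agrees on the fine side with the trace of $\varphi^m_{k,i}$ from the coarse side. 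This is exactly the statement that the coefficients $\prod_{m=1}^{\alpha}\varphi_{k,j_{m-1}}(x_{j_m})$ are the correct interpolation weights: evaluating $\varphi^m_{k,i}$ at a hanging node $x_j$ of degree $\alpha$ unfolds, via the nodal interpolation of the coarse function restricted successively down the chain of ancestors $x_{j_0}=x_i, x_{j_1},\dots,x_{j_\alpha}=x_j$, into precisely that telescoping product of point values. I would prove this by induction on $\alpha$, the degree of the hanging node: for $\alpha=1$ it is the standard constrained-approximation relation (a first-level hanging node value equals the coarse function evaluated there, i.e.\ $\varphi_{k,i}(x_j)$); the inductive step uses that the fine basis function on one level, restricted to the next finer interface, is itself the zero-extension of a regular nodal function whose values at the next generation of hanging nodes are given by the delta property and the interpolation formula. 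The role of the tilde-sets $\widetilde{\mathcal H}_{x_i}^{\alpha,\beta}$ (excluding ancestors that are uncles/great-uncles of themselves, as in the 3D L-shaped example) is to ensure each contributing hanging node is counted with exactly the right total multiplicity so that no double-counting corrupts the trace; this bookkeeping is where I would spend the most care.

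The main obstacle, and the part needing the most careful argument, is precisely this combinatorial/multiplicity bookkeeping: showing that the sum over $\beta$ (repetitions) together with the restriction to $\widetilde{\mathcal H}_{x_i}^{\alpha,\beta}$ reconstructs the trace of the coarse function exactly once on each fine sub-face, with no omitted or duplicated terms, in the presence of arbitrary-level hanging nodes where a single fine node may descend from a master through several distinct ancestor chains. Once the trace-matching identity $\widehat\varphi_{k,i}\big|_{\Gamma,\text{fine}} = \varphi^m_{k,i}\big|_{\Gamma,\text{coarse}}$ is in hand for every interface and every master node, continuity of $\widehat\varphi_{k,i}$ follows immediately: both sides are continuous within their respective (closed) subdomains by the regularity of $\mathcal T_k^l$, and they agree on the common boundary. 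Summing (the spaces being finite-dimensional) and recalling that on interfaces \emph{not} touching $\mathrm{supp}(\varphi^m_{k,i})$ all terms vanish, we conclude $\widehat\varphi_{k,i}\in C^0(\Omega)$ for all $x_i\in X_k$.
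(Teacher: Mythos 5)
Your overall strategy---reduce continuity to matching across interfaces, show that the enriched function's values at the hanging nodes reproduce the coarse values, then upgrade nodal agreement to agreement of whole traces because the traces are polynomials determined by their values at the $p$ interface nodes---is the same skeleton the paper uses. The second half of your plan is sound and is essentially the paper's argument with $g|_{e_j}\in V_k^c\big|_{e_j}\subseteq V_k^f\big|_{e_j}$ and agreement at the interface nodes.

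The genuine gap is in the first half, which you yourself flag as ``the main obstacle'' but never actually prove. You assert that the coefficients $\prod_{m=1}^{\alpha}\varphi_{k,j_{m-1}}(x_{j_m})$, summed over the with-repetition sequences and restricted to the sets $\widetilde{\mathcal H}_{x_i}^{\alpha,\beta}$, are ``the correct interpolation weights,'' and that an induction on $\alpha$ would give the trace-matching identity; but the induction is only sketched, and in the arbitrary-level setting the claim is not a routine unfolding of the $1$-irregular constraint. A hanging node $x_j$ may descend from $x_i$ through several distinct ancestor chains, so its coefficient is a sum of products (e.g.\ $0.25+0.25$ in the paper's 2D example), and this value must coincide with the limit of $\widehat\varphi_{k,i}$ from \emph{every} adjacent set $\mathcal E_k^{\gamma}$, which splits into two qualitatively different regimes: on sides where $x_j\in X_k^{\gamma}$ the function $\varphi_{k,j}$ tends to $1$ while all ancestor functions tend to $0$; on sides where $x_j\notin X_k^{\gamma}$ the ancestor functions carry the value and $\varphi_{k,j}\equiv 0$. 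Verifying that both regimes produce the same number is exactly the content of the paper's two-case limit computation around Eq.~\eqref{simplified}. Moreover, the role of the tilde sets is not only multiplicity bookkeeping: excluding ancestors that are uncles or great-uncles of themselves is precisely what guarantees that, in the second regime, every contributing ancestor belongs to $X_k^{\gamma}$, so its one-sided limit equals its nodal value (the paper's Case 2); in the 3D L-shaped example, retaining the excluded terms would change the value at $x_3$ and $x_4$ and the one-sided limits would no longer match. Since this verification is the entire substance of the theorem in the master-node case, leaving it as an acknowledged plan leaves the proof incomplete.
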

\begin{proof}
For any node $x_i \in \mathcal I_k$, the continuity is inherited from $\varphi_{k,i}$, while
if $x_i \in \mathcal H_k$, then $\widehat \varphi_{k,i}$ is identically $0$, thus it is continuous.
For $x_i \in \mathcal M_k$, the proof is more complex. Vaguely speaking we want to show
that if $\varphi_{k,i}$ is a discontinuous function associated with a master node $x_i$,
it is possible to build a continuous function $\widehat \varphi_{k,i}$ with the
contributions from hanging node functions,  $\varphi_{k,j}$, for some $j$.
Therefore, let $x_i \in \mathcal M_k$, we will show that $\widehat \varphi_{k,i}$ is continuous
at all hanging nodes $x_j$ associated with $x_i$. 
Note that as we have already discussed,
$x_j$ could belong to more than just one $\widetilde{\mathcal{H}}_{x_i}^{\alpha,\beta}$, meaning
that $x_j$ could be a child of more than just one parent.
Let $e_j$ be the edge (or face) containing $x_j$ that belongs to the coarsest
triangulation that has $x_j$ among its nodal points. 
On $e_j$ then, there will be  $\eta_{max}$ parent nodes $x_{j_{\eta}} \in \mathcal{H}^{\alpha}_{x_i}$ for some $\alpha$,
for which $\varphi_{j_{\eta}}(x_j) \neq 0$.
Considering that $x_j$ can posses multiple degrees as hanging node of $x_i$, 
let $\alpha_{\eta}$ be the degree of $x_j$ as child of $x_{j_{\eta}}$. Then there will be $\eta_{max}$ {\it ancestor} sequences 
$\{ x_{j_{\eta},\zeta}\}_{\zeta=0}^{\alpha_{\eta}}$, with $x_{j_{\eta},{\alpha_{\eta}}} \equiv x_j$, 
$x_{j_{\eta},\alpha_{\eta}-1} \equiv x_{j_{\eta}}$, and $x_{j_{\eta},0} \equiv x_i$.
If we denote by $\varphi_{k,j_{\eta},\zeta}$ the functions whose nodal point is $x_{j_{\eta},\zeta}$,
then $\widehat \varphi_{k,i}$ can be rewritten as
\begin{align} \label{simplified}
\widehat \varphi_{k,i} = \sum\limits_{\eta=1}^{\eta_{max}} \Big[ \Big(\prod\limits_{\zeta=1}^{\alpha_{\eta}-1}\varphi_{k,j_{\eta},\zeta-1}(x_{j_{\eta},\zeta})\Big)\varphi_{k,j_{\eta}}
+ \Big(\prod\limits_{\zeta=1}^{\alpha_{\eta}} \varphi_{k,j_{\eta},\zeta-1}(x_{j_{\eta},\zeta})\Big)\varphi_{k,j}\Big] + f_j,
\end{align}
where $f_j$ is a function for which $f_j(x_j)=0$ and $\lim\limits_{x\rightarrow x_j} f_j(x) = 0$.
Since $x_j$ is a hanging node, we have that $\varphi_{k,j}(x_j)=0$. Hence, recalling that $x_{j_{\eta},{\alpha_{\eta}}} \equiv x_j$, 
and $\varphi_{k,j_{\eta},\alpha-1} \equiv \varphi_{k,j_\eta}$, we have
$$\widehat \varphi_{k,i}(x_j) = \sum\limits_{\eta=1}^{\eta_{max}} \Big(\prod\limits_{\zeta=1}^{\alpha_{\eta}}\varphi_{k,j_{\eta},\zeta-1}(x_{j_\eta,\zeta})\Big).$$
Now, let's define the set 
\begin{align}\label{Z}
Z_j = \bigcup\limits_{\gamma}\mathcal{E}_k^{\gamma},
\end{align}
where each $\mathcal{E}_k^{\gamma}$ in the above union is such that $x_j$ belongs to the closure of $\mathcal{E}_k^{\gamma}$.
We observe that $\mathcal{E}_k^{\gamma}$ (and so $Z_j$) is independent of $\eta$ and that $e_j \subseteq \mathcal{E}_k^{\gamma}$ for some $\gamma$.
Moreover, $\lim\limits_{x \in \Omega \rightarrow x_j}\widehat \varphi_{k,i}(x) = \lim\limits_{x \in Z_j\rightarrow x_j}\widehat \varphi_{k,i}(x)$,
therefore to show existence of the limit $\lim\limits_{x \in \Omega \rightarrow x_j}\widehat \varphi_{k,i}(x)$ we have to show that 
the limits $\lim\limits_{x \in \mathcal{E}_k^{\gamma}\rightarrow x_j}\widehat \varphi_{k,i}(x)$ all exist and are equal for all $\gamma$.
On every set $\mathcal{E}_k^{\gamma}$, by definition, exactly one of the following two cases can happen:
\begin{enumerate} [leftmargin= * ]
 \item $x_{j} \in X_k^{\gamma}$, $x_j \notin \mathcal{E}_k^{\gamma}$ and $\lim\limits_{x \in \mathcal{E}_k^{\gamma}\rightarrow x_j}\varphi_{k,j_{\eta}}(x) = 0$, for all $\eta$ .
 
 The above limit is obtained for the following reason:
 if $x_{j_{\eta}}\in X_k^{\gamma}$, then $\varphi_{k,j_{\eta}} \equiv \varphi^{\gamma}_{k,j_{\eta}}$ on
 $\mathcal{E}_k^{\gamma}$, for all $\eta$, and it is true that $\lim\limits_{x \in \mathcal{E}_k^{\gamma}\rightarrow x_j}\varphi^{\gamma}_{k,j_{\eta}}(x) = 0$ .
 If $x_{j_{\eta}}\notin X_k^{\gamma}$, then $\varphi_{k,j_{\eta}} \equiv 0$ on $\mathcal{E}_k^{\gamma}$, so the same result is verified.
 Note that since $x_j \in X_k^{\gamma}$, from Definition \ref{alg1} we have that $\varphi_{k,j} \equiv \varphi^{\gamma}_{k,j}$ on $\mathcal{E}_k^{\gamma}$.
 
 \item $x_{j} \notin X_k^{\gamma}$, $x_j \in \mathcal{E}_k^{\gamma}$  and $\lim\limits_{x \in \mathcal{E}_k^{\gamma}\rightarrow x_j}\varphi_{k,j_\eta}(x) = \varphi_{k,j_\eta}(x_j)$, for all $\eta$ .
  
 In this case, $x_{j_{\eta}}\in X_k^{\gamma}$ always, and we have $\varphi_{k,j_{\eta}} \equiv \varphi^{\gamma}_{k,j_{\eta}}$ on
 $\mathcal{E}_k^{\gamma}$, for all $\eta$.
 Nodes $x_{j_{\eta}}\notin X_k^{\gamma}$ on $e_j$ cannot exist. If they did, then $x_j$ would be {\it uncle} or {\it great-uncle} 
 of itself but such nodes have been removed from the sets $\mathcal{H}_{x_i}^{\alpha,\beta}$ so this cannot happen.
  Moreover, since $x_{j} \notin X_k^{\gamma}$, and $x_j \in \mathcal{E}_k^{\gamma}$, Definition \ref{alg1} gives $\varphi_{k,j} \equiv 0$ on $\mathcal{E}_k^{\gamma}$. 
 \end{enumerate}
 
 Hence, let $\mathcal{E}_k^{\gamma}$ be given and assume we are in Case $1$. Then 
$$\lim\limits_{x \in \mathcal{E}_k^{\gamma}\rightarrow x_j}\varphi_{k,j_{\eta}}(x) = 0 \,\, \mbox{for all} \,\, \eta, \qquad \lim\limits_{x \in \mathcal{E}_k^{\gamma}\rightarrow x_j}\varphi_{k,j}(x)= \lim\limits_{x \in \mathcal{E}_k^{\gamma}\rightarrow x_j}\varphi^{\gamma}_{k,j}(x) = \delta_{jj}=1 .$$
Referring to Eq. \eqref{simplified} this implies that 
$$ \lim\limits_{x \in \mathcal{E}_k^{\gamma}\rightarrow x_j} \widehat{\varphi}_{k,i}(x) =  \sum\limits_{\eta=1}^{\eta_{max}} \Big(\prod\limits_{\zeta=1}^{\alpha_{\eta}}\varphi_{k,j_\eta,\zeta-1}(x_{j_\eta,\zeta})\Big).$$
In Case $2$, we have for all $\eta$,
$$\lim\limits_{x \in \mathcal{E}_k^{\gamma}\rightarrow x_j}\varphi_{k,j_\eta}(x) = \lim\limits_{x \in \mathcal{E}_k^{\gamma}\rightarrow x_j}\varphi^{\gamma}_{k,j_\eta}(x) = \varphi^{\gamma}_{k,j_{\eta}}(x_j) =  \varphi_{k,j_\eta}(x_j), \quad \lim\limits_{x \in \mathcal{E}_k^{\gamma}\rightarrow x_j}\varphi_{k,j}(x)= 0.$$
Again referring to Eq. \eqref{simplified} this gives 
$$ \lim\limits_{x \in \mathcal{E}_k^{\gamma}\rightarrow x_j} \widehat{\varphi}_{k,i}(x) =  \sum\limits_{\eta=1}^{\eta_{max}} \Big(\prod\limits_{\zeta=1}^{\alpha_{\eta}}\varphi_{k,j_\eta,\zeta-1}(x_{j_\eta,\zeta})\Big).$$
This proves that $\lim\limits_{x \in \Omega \rightarrow x_j} \widehat{\varphi}_{k,i}(x)$ exists and
$$\widehat{\varphi}_{k,i}(x_j) =  \sum\limits_{\eta=1}^{\eta_{max}} \Big(\prod\limits_{\zeta=1}^{\alpha_{\eta}}\varphi_{k,j_\eta,\zeta-1}(x_{j_\eta,\zeta})\Big) = \lim_{x \in \Omega \rightarrow x_j} \widehat{\varphi}_{k,i}(x).$$
Hence, the continuity of $\widehat{\varphi}_{k,i}$ at any hanging node $x_j$ of $x_i$ is proved.

To complete the proof, we have to show that $\widehat{\varphi}_{k,i}$ is continuous on all edges or faces in the triangulation $\mathcal T_k$
that contain at least one hanging node.
Continuity at any other point in the domain is inherited from the continuity of $\varphi_{k,i}$.
Let $e_{j}$ be such edge or face that contains $p$ nodes $x_{j_\ell}$, with $\ell=1,\ldots,p$,
where at least one of them is a hanging node.
Then there exist at least two elements $\Omega_{c} \in \mathcal{T}^{c}_k$ and $\Omega_{f} \in \mathcal{T}^{f}_k$ such that:
\begin{itemize} [leftmargin= * ]
 \item $e_j = \Omega_c \cap \Omega_f$,
 \item $\Omega_c \subseteq \mathcal{E}_k^{c}$ and $\Omega_f \subseteq \overline{\mathcal{E}_k^{f}}$,
\end{itemize}
where $\overline{\mathcal{E}_k^{f}}$ denotes the closure of $\mathcal{E}_k^{f}$.
We remark that the superscripts $c$ and $f$ stand for coarse and fine, respectively.
If there exist more than two elements that share the same $e_j$ (this is possible if $e_j$ is an edge in a 3D triangulation) then
we consider them in pairs, where $f$ is fixed and always refers to the element on the finest triangulation
while $c$ spans the remaining elements, one at the time.

Let us define the following function
    $$ h(x) := 
 \left\{
 \begin{array}{c l l}
 \widehat{\varphi}_{k,i}(x)& & \mbox{if} \,\, x \in int(\Omega_f) \\ 
  \lim\limits_{t \in int(\Omega_f) \rightarrow x} \widehat{\varphi}_{k,i}(t) && \mbox{if} \,\, x \in e_j 
  \end{array}
\right..
  $$
Then $h$ is a polynomial function on its domain. Let $h|_{e_j}$ be the trace of $h$ on $e_j$. Its 
value on $e_j$ is uniquely determined by the values of $h$ at the $p$ interface nodes.
Namely, the space of the traces, referred as $V_k^f\big|_{e_j} \subseteq H^{1/2}(e_j)$, has dimension $p$
and 
$$ h|_{e_j}(x) = \sum_{\ell=1}^{p} h(x_{j_\ell}) \,\, \varphi_{k,j_\ell}^f\big|_{e_j}(x).$$
Let $g(x) := \widehat{\varphi}_{k,i}(x)$ for all $x \in \Omega_c$.
The function $g$ is also a polynomial function on its domain.
Let $g|_{e_j}$  be the trace of $g$ on $e_j$ and $V_k^c\big|_{e_j}$ the corresponding space of the traces.
Since $g|_{e_j} \in V_k^c\big|_{e_j} \subseteq V_k^f\big|_{e_j}$, the function $g|_{e_j}$ can also be represented
with the bases of $V_k^f\big|_{e_j}$, namely
$$ g|_{e_j}(x) = \sum_{\ell=1}^{p} g(x_{j_\ell}) \,\, \varphi_{k,j_\ell}^f\big|_{e_j}(x).$$
Recalling that by continuity of $\widehat{\varphi}_{k,i}$ at the $p$ nodes, on $e_j$ we also have
$$h(x_{j_\ell}) = \widehat{\varphi}_{k,i}(x_{j_\ell})= g(x_{j_\ell}), \,\, \mbox{for all} \,\, \ell=1,\ldots, p,$$
and consequently $h(x) = g(x)$ for all $x \in e_j$.
Therefore, for all $x \in e_j$
$$\lim\limits_{t \in int(\Omega_f) \rightarrow x} \widehat{\varphi}_{k,i}(t) =
h(x) = g(x) = \widehat{\varphi}_{k,i}(x) =  \lim\limits_{t \in int(\Omega_c) \rightarrow x} \widehat{\varphi}_{k,i}(t).$$
Thus, $\widehat{\varphi}_{k,i}$ is continuous at every face or edge $e_j$ that contains at least one hanging node.
This completes the proof.
 \end{proof}

\begin{lemma}\label{hatphi_li}
The set $\{ \widehat \varphi_{k,i}$ : $x_i \in \mathcal I_k \cup \mathcal M_k\}$, is linearly independent.
\end{lemma}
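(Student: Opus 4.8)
The plan is to exploit the block structure of the matrix $\widehat R_k$ in Eq.~\eqref{matrix_vector} in order to reduce the statement to the already-established linear independence of $\{\varphi_{k,i} : x_i \in X_k\}$ from Lemma \ref{phi_li}. First I would record the key structural fact coming from Definition \ref{alg2}: for every $x_i \in \mathcal I_k \cup \mathcal M_k$ the function $\widehat\varphi_{k,i}$ is of the form $\varphi_{k,i} + \sum_{j : x_j \in \mathcal H_k} c_{ij}\,\varphi_{k,j}$ for suitable real coefficients $c_{ij}$ (with all $c_{ij}=0$ when $x_i \in \mathcal I_k$); that is, $\widehat\varphi_{k,i}$ contains $\varphi_{k,i}$ with coefficient $1$, while every other contributing function is indexed by a hanging node. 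Restricting \eqref{matrix_vector} to the rows associated with $\mathcal I_k$ and $\mathcal M_k$, this reads
$$
\begin{bmatrix} \widehat{\bm\varphi}_{k,\mathcal I_k} \\ \widehat{\bm\varphi}_{k,\mathcal M_k} \end{bmatrix}
=
\begin{bmatrix} I & 0 & 0 \\ 0 & I & \Phi_k \end{bmatrix}
\begin{bmatrix} \bm\varphi_{k,\mathcal I_k} \\ \bm\varphi_{k,\mathcal M_k} \\ \bm\varphi_{k,\mathcal H_k} \end{bmatrix},
$$
and the rectangular matrix on the right has full row rank $\lvert \mathcal I_k\rvert + \lvert \mathcal M_k\rvert$ thanks to its two leading identity blocks. (The functions $\widehat\varphi_{k,i}$ with $x_i \in \mathcal H_k$ are identically $0$, so the lemma correctly considers only $\mathcal I_k \cup \mathcal M_k$.)

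Next I would run the standard argument. Suppose $\sum_{i\in\mathcal I_k} a_i\,\widehat\varphi_{k,i} + \sum_{i\in\mathcal M_k} b_i\,\widehat\varphi_{k,i} = 0$. Substituting the expansion above, the left-hand side becomes a linear combination of the family $\{\varphi_{k,i} : x_i \in X_k\}$ in which the coefficient of $\varphi_{k,i}$ is $a_i$ when $x_i\in\mathcal I_k$, is $b_i$ when $x_i\in\mathcal M_k$, and is $\sum_{i\in\mathcal M_k} b_i\,c_{ij}$ when $x_j\in\mathcal H_k$. Here I would invoke the disjointness of $\mathcal I_k$, $\mathcal M_k$, $\mathcal H_k$ and the fact that $\mathcal I_k\cup\mathcal M_k\cup\mathcal H_k = X_k$ (the remark following Definition \ref{fundSets}, recall Remark \ref{disjointness}) to guarantee that no node index is counted twice, so each of these coefficients is genuinely read off from the combination. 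By Lemma \ref{phi_li} every coefficient must vanish; in particular $a_i = 0$ for all $x_i\in\mathcal I_k$ and $b_i = 0$ for all $x_i\in\mathcal M_k$, which is exactly the asserted linear independence. Equivalently, one may phrase the whole step as: applying a matrix of full row rank to a linearly independent family produces a linearly independent family.

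I do not expect a genuine obstacle here; the only point requiring care is the index bookkeeping, namely verifying that after expanding the $\widehat\varphi_{k,i}$ in the basis $\{\varphi_{k,i}\}$ the coefficient attached to each $\varphi_{k,i}$ with $x_i \in \mathcal I_k \cup \mathcal M_k$ receives a contribution from exactly one term (so it equals $a_i$ or $b_i$ with no cross terms). This is precisely what the disjointness of the three node classes ensures, and once it is in place the conclusion is immediate from Lemma \ref{phi_li}.
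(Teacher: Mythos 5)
Your proof is correct, but it takes a genuinely different route from the paper's. The paper never invokes Lemma \ref{phi_li} here: it proves directly that the new functions satisfy the nodal delta property $\widehat\varphi_{k,i}(x_j)=\delta_{ij}$ for all $x_j\in\mathcal I_k\cup\mathcal M_k$ (recorded as \eqref{delta_property3}), using the fact that a master-node function $\varphi_{k,i}$ satisfies the delta property at every non-hanging node while every hanging-node function satisfies the zero property \eqref{zero_property}, and then linear independence follows by evaluating a vanishing combination at those nodes. You instead expand each $\widehat\varphi_{k,i}$ in the discontinuous family $\{\varphi_{k,i}:x_i\in X_k\}$, observe from Definition \ref{alg2} (equivalently from the block rows $[\,I\ \ 0\ \ 0\,]$ and $[\,0\ \ I\ \ \Phi_k\,]$ of $\widehat R_k$ in \eqref{matrix_vector}) that the diagonal coefficient is $1$ and every off-diagonal contribution is indexed by a hanging node, and then let Lemma \ref{phi_li} together with the disjointness of $\mathcal I_k$, $\mathcal M_k$, $\mathcal H_k$ force all coefficients to vanish; your structural claim is right, since every node in $\widetilde{\mathcal H}^{\alpha,\beta}_{x_i}$ is a child of some node and hence lies in $\mathcal H_k$. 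What your version buys is a purely algebraic argument (a block-triangular, full-row-rank matrix applied to a linearly independent family yields a linearly independent family) that avoids any pointwise evaluation of the discontinuous functions and reuses the already-proved Lemma \ref{phi_li}; what the paper's version buys is the interpolation-type delta property \eqref{delta_property3} of the continuous basis at interior and master nodes as a by-product, a fact useful in its own right, at the cost of arguing about nodal values of functions that are discontinuous elsewhere. Both proofs are short and complete, and the index bookkeeping you flag as the delicate point is handled correctly.
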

\begin{proof}$\,$\\
If $x_i \in \mathcal I_k$ then $\widehat \varphi_{k,i} = \varphi_{k,i}$ 
and the statement follows from \eqref{delta_property2}.\\
If $x_i \in \mathcal M_k$ then, according to \eqref{matrix_vector}, each $\widehat \varphi_{k,i}$ 
is a linear combination of one master node function $\varphi_{k,i}$, 
which satisfies the delta property for all $x_j \notin \mathcal H_k$, 
and several hanging node functions $\varphi_{k,j}$, which satisfy the zero property \eqref{zero_property}. 
Hence, letting $x_j\notin \mathcal H_k$, we have
\begin{equation}
\widehat {\bm \varphi}_{k,\mathcal M}(x_j) = {\bm \varphi}_{k,\mathcal M}(x_j) + \Phi_k {\bm \varphi}_{k,\mathcal H}(x_j) 
= \delta_{ij} \mathbf{1} + \mathbf{0}.
\end{equation}
This means that 
\begin{equation}
\widehat \varphi_{k,i}(x_j)=\delta_{ij}\,, \mbox{ for all } x_j\in \mathcal I_k \cup \mathcal M_k \label{delta_property3}.
\end{equation}
Since the $ \widehat \varphi_{k,i}$ have the delta property, 
the set $\{ \widehat \varphi_{k,i}$ : $x_i \in \mathcal I_k \cup \mathcal M_k\}$, is linearly independent.
\end{proof}

\begin{definition}\label{def2}
Let $\widehat{N}_k$ be the total number of interior and master nodes. 
We define the set $\widehat{V}_k$ initially introduced in \eqref{fem_spaces} to be the set spanned by 
$\{ \widehat{\varphi}_{k,i}\}_{i=1}^{\widehat{N}_k}$, 
namely $\widehat{V}_k \equiv span(\{ \widehat{\varphi}_{k,i}\}_{i=1}^{\widehat{N}_k})$.
Because of Lemma \ref{hatphi_li}, it follows that $\{ \widehat{\varphi}_{k,i}\}_{i=1}^{\widehat{N}_k}$ 
also forms a basis for $\widehat{V}_k$, where $\widehat{V}_k$ is a vector space
with the standard addition and scalar multiplication for real valued functions.
\end{definition}

\begin{remark}\label{weird}
To the unique representation of an element $\widehat v \in \widehat{V}_k$
$$ \widehat v = \sum_{i: x_i \in \mathcal I_k \cup \mathcal M_k} \widehat{ \mathrm{v}}_i \widehat{\varphi}_{k,i}, $$
we artificially add the hanging node functions, which are identically zero, i.e.
$$ \widehat v = \sum_{i: x_i \in \mathcal I_k \cup \mathcal M_k} \widehat{ \mathrm{v}}_i \widehat{\varphi}_{k,i}
+ \sum_{i: x_i \in \mathcal H_k} \widehat{ \mathrm{v}}_i \widehat{\varphi}_{k,i}.
$$
The new representation of $\widehat v$ is not unique anymore, since the coefficients ${\mathrm{\widehat v}}_i$ 
associated with the zero hanging node functions are arbitrary. This choice may seem odd,
but we again emphasize that it is motivated by the fact that in the numerical implementation of the algorithm we want
to preserve the same dimensions between the arrays and matrices associated with the spaces $V_k$ and $\widehat V_k$.
\end{remark}

\begin{proposition}
The space $\widehat{V}_k$ is a subspace of $V_k$.
\end{proposition}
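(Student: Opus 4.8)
The plan is to show that $\widehat V_k$ is contained in $V_k$ and is closed under the vector space operations inherited from $V_k$, which together give the claim. First I would recall that, by Definition~\ref{def1}, $V_k$ is a vector space of real-valued functions on $\Omega$ equipped with the standard pointwise addition and scalar multiplication, and that it contains each of its generators $\varphi_{k,i}$, $i=1,\ldots,N_k$.

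The key step is the observation that every $\widehat\varphi_{k,i}$ lies in $V_k$. Indeed, by Definition~\ref{alg2} — equivalently, by the matrix--vector identity \eqref{matrix_vector}, $\widehat{\bm\varphi}_k = \widehat R_k\,\bm\varphi_k$ — each $\widehat\varphi_{k,i}$ is a \emph{finite} linear combination of the functions $\varphi_{k,j}$: it equals $\varphi_{k,i}$ when $x_i\in\mathcal I_k$; it equals $\varphi_{k,i}$ plus a finite sum of hanging node functions weighted by the product coefficients when $x_i\in\mathcal M_k$; and it is the zero function when $x_i\in\mathcal H_k$. Since $V_k$ is a vector space and each $\varphi_{k,j}\in V_k$, each such combination belongs to $V_k$; hence $\widehat\varphi_{k,i}\in V_k$ for all $i$.

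Finally, since $\widehat V_k = \mathrm{span}\big(\{\widehat\varphi_{k,i}\}_{i=1}^{\widehat N_k}\big)$ by Definition~\ref{def2} and every generator belongs to the vector space $V_k$, any element of $\widehat V_k$ — being a finite linear combination of elements of $V_k$ — also belongs to $V_k$, so $\widehat V_k\subseteq V_k$. Moreover, being the span of a finite set of functions, $\widehat V_k$ contains the zero function and is closed under addition and scalar multiplication; hence it is itself a vector space under the operations inherited from $V_k$, i.e.\ a subspace of $V_k$.

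I do not expect any genuine obstacle here: the only point requiring care is the bookkeeping check that the addition and scalar multiplication used to form $\mathrm{span}(\{\widehat\varphi_{k,i}\})$ coincide with the operations that make $V_k$ a vector space, which is immediate since in both cases they are the standard pointwise operations on real-valued functions on $\Omega$. One may also note, redundantly, that by Remark~\ref{weird} the hanging-node generators contribute nothing, so in fact $\widehat V_k = \mathrm{span}\big(\{\widehat\varphi_{k,i} : x_i\in\mathcal I_k\cup\mathcal M_k\}\big)$; this does not affect the argument.
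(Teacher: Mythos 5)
Your argument is correct and is essentially the paper's own proof: both rest on the observation that each $\widehat\varphi_{k,i}$ is by Definition~\ref{alg2} (equivalently, Eq.~\eqref{matrix_vector}) a finite linear combination of the basis functions $\varphi_{k,j}$ of $V_k$, so its span lies in $V_k$. Your additional closure and bookkeeping remarks are fine but merely elaborate what the paper deems immediate.
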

\begin{proof}
The proof is immediate, since each function $\widehat \varphi_{k,i}$ is constructed as a linear combination of 
functions $\varphi_{k,i}$, which belong to a basis for $V_k$. 
\end{proof}

\begin{proposition}\label{magheggio}
Let $a\in V_k$ and $\widehat b\in \widehat V_k$. Let 
$ \mathbf{a}=[\mathbf{a}_{\mathcal I}, \mathbf{a}_{\mathcal M},\mathbf{a}_{\mathcal H} ]^{\mathsf{T}} $
and $ \widehat{\mathbf{b}}=[\widehat{\mathbf{b}}_{\mathcal I}, \widehat{\mathbf{b}}_{\mathcal M},\widehat{\mathbf{b}}_{\mathcal H} ]^{\mathsf{T}} $
be the coefficient representation vectors of $a$ and $\widehat b$, i.e.
$${a} = {{\bm \varphi}^{\mathsf{T}}_k}{\mathbf{a}}, \quad\mbox{ and } \quad
\widehat{b} = {\widehat {\bm \varphi}_k}^{\mathsf{T}} \widehat {\mathbf{b}},$$
Then $a=\widehat b$ iff 
$\qquad
\mathbf{a}_{\mathcal I}=\widehat{\mathbf{b}}_{\mathcal I},\quad 
  \mathbf{a}_{\mathcal M}=\widehat{\mathbf{b}}_{\mathcal M}, \quad\mbox{and}\quad
  \mathbf{a}_{\mathcal H}=\Phi_k^{\mathsf{T}} {\mathbf{a}}_{\mathcal M}.
 $
\end{proposition}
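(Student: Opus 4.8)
The plan is to turn the statement into a single matrix identity and then read off the three block equations. The essential ingredient is that $\{\varphi_{k,i}\}_{i=1}^{N_k}$ is a basis for $V_k$ (Lemma \ref{phi_li} together with Definition \ref{def1}), so every function in $V_k$ has a unique coefficient vector with respect to $\bm\varphi_k$. Using the matrix form \eqref{matrix_vector}, namely $\widehat{\bm\varphi}_k = \widehat R_k\,\bm\varphi_k$, I would first rewrite $\widehat b$ in the $\bm\varphi_k$ basis:
$$ \widehat b = {\widehat{\bm\varphi}_k}^{\mathsf T}\widehat{\mathbf b} = {(\widehat R_k\bm\varphi_k)}^{\mathsf T}\widehat{\mathbf b} = {\bm\varphi_k}^{\mathsf T}\,\widehat R_k^{\mathsf T}\widehat{\mathbf b}. $$
Since $a = {\bm\varphi_k}^{\mathsf T}\mathbf a$ as well, linear independence of $\{\varphi_{k,i}\}$ gives the equivalence
$$ a = \widehat b \quad\Longleftrightarrow\quad \mathbf a = \widehat R_k^{\mathsf T}\widehat{\mathbf b}, $$
which covers both directions at once.

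The next step is the block computation of $\widehat R_k^{\mathsf T}\widehat{\mathbf b}$. Transposing the block structure in \eqref{matrix_vector},
$$ \widehat R_k^{\mathsf T} = \left[\begin{matrix} I & 0 & 0\\ 0 & I & 0\\ 0 & \Phi_k^{\mathsf T} & 0\end{matrix}\right], \qquad \widehat R_k^{\mathsf T}\widehat{\mathbf b} = \left[\begin{matrix}\widehat{\mathbf b}_{\mathcal I}\\ \widehat{\mathbf b}_{\mathcal M}\\ \Phi_k^{\mathsf T}\widehat{\mathbf b}_{\mathcal M}\end{matrix}\right]. $$
Matching this against $\mathbf a = [\mathbf a_{\mathcal I},\mathbf a_{\mathcal M},\mathbf a_{\mathcal H}]^{\mathsf T}$ yields $\mathbf a_{\mathcal I}=\widehat{\mathbf b}_{\mathcal I}$, $\mathbf a_{\mathcal M}=\widehat{\mathbf b}_{\mathcal M}$ and $\mathbf a_{\mathcal H}=\Phi_k^{\mathsf T}\widehat{\mathbf b}_{\mathcal M}$; substituting the second identity into the third gives $\mathbf a_{\mathcal H}=\Phi_k^{\mathsf T}\mathbf a_{\mathcal M}$, which is exactly the asserted condition. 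Conversely, the three conditions in the statement plainly force $\mathbf a = \widehat R_k^{\mathsf T}\widehat{\mathbf b}$, hence $a = \widehat b$.

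The only subtle point — and the one I would flag explicitly — is the redundancy described in Remark \ref{weird}: because the hanging-node functions $\widehat\varphi_{k,j}$ vanish identically, the block $\widehat{\mathbf b}_{\mathcal H}$ is not determined by $\widehat b$. This causes no trouble, since the third block-column of $\widehat R_k^{\mathsf T}$ is zero, so $\widehat R_k^{\mathsf T}\widehat{\mathbf b}$, and therefore the whole equivalence, is independent of the choice of $\widehat{\mathbf b}_{\mathcal H}$. Apart from making that observation, the argument is a routine block-matrix manipulation once the basis property of $V_k$ is invoked, so I do not expect any genuine obstacle.
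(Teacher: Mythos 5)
Your proof is correct and follows essentially the same route as the paper, which simply cites Eq.~\eqref{matrix_vector}: you rewrite $\widehat b$ in the $\bm\varphi_k$ basis via $\widehat{\bm\varphi}_k=\widehat R_k\bm\varphi_k$, invoke the basis property of $V_k$ to reduce $a=\widehat b$ to $\mathbf a=\widehat R_k^{\mathsf T}\widehat{\mathbf b}$, and read off the blocks. Your explicit treatment of the arbitrary $\widehat{\mathbf b}_{\mathcal H}$ block matches Remark~\ref{same_representation}, so there is nothing to add.
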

\begin{proof}
The proof of the proposition follows from Eq. \eqref{matrix_vector}. 
\end{proof}
\begin{remark} \label{same_representation}
 Note that the above proposition implies that the equality of $a\in V_k$ and $\widehat b\in \widehat V_k$ is independent of the 
 value of $\widehat{\mathbf{b}}_{\mathcal H}$.
 Moreover if  $a\in V_k$ is such that $\mathbf{a} = [\mathbf{a}_{\mathcal I}, \mathbf{a}_{\mathcal M},\Phi_k ^T \mathbf{a}_{\mathcal M} ]^{\mathsf{T}}$,
 then $a$ is also in $\widehat V_k$ and we can choose the same vector $\mathbf{a}$ as representation of $a$ in ${\widehat V}_k$.
  
\end{remark}

\subsection{The inter-space operators}
\begin{definition}
 The prolongation operator $\widehat {\mathcal P}_k : \widehat V_k  \rightarrow V_k$ 
is the natural injection and its action on $\widehat v$ is given by 
$$\widehat {\mathcal P}_k \widehat v = {\bm \varphi}^{\mathsf{T}}_k \left(\widehat P_k \widehat {\mathbf{v}}\right),$$
\new{where $\widehat P_k$ is the matrix representation of $\widehat {{\mathcal P}}_k$.}
\end{definition}
\begin{proposition}
Let $\widehat v$ be given as a linear combination of the basis of $\widehat V_k$ as in Remark \ref{weird}. 
In vector notation $$\widehat v = {\widehat {\bm \varphi}_k}^{\mathsf{T}} \widehat { \mathbf{v} },$$
for some coefficient vector $\widehat {\mathbf{v}} = \left[\widehat{ \mathrm{v}}_1, \widehat {\mathrm{v}}_2\dots,\widehat {\mathrm{v}}_{N_k}\right]^{\mathsf{T}} \in \mathbb{R}^{N_k}$. 
Then, for the matrix representation of $\widehat {\mathcal P}_k$ we have 
$\widehat P_k = \widehat R_k^\mathsf{T}, $ where
$\widehat R_k$ is the matrix from Eq. \eqref{matrix_vector}.
\end{proposition}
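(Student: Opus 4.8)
The plan is to compute the action of $\widehat{\mathcal P}_k$ on an arbitrary element of $\widehat V_k$ directly from the matrix--vector identity \eqref{matrix_vector}, and then to extract $\widehat P_k$ using linear independence of the $\varphi_{k,i}$. First I would take $\widehat v\in\widehat V_k$ and write it as $\widehat v=\widehat{\bm\varphi}_k^{\mathsf T}\widehat{\mathbf v}$ with $\widehat{\mathbf v}\in\mathbb R^{N_k}$ in the (non-unique) form of Remark \ref{weird}. Since $\widehat V_k\subseteq V_k$, the natural injection acts as $\widehat{\mathcal P}_k\widehat v=\widehat v$ once we regard $\widehat v$ as a member of $V_k$. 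Substituting $\widehat{\bm\varphi}_k=\widehat R_k\bm\varphi_k$ from \eqref{matrix_vector} gives
$$\widehat{\mathcal P}_k\widehat v=\widehat{\bm\varphi}_k^{\mathsf T}\widehat{\mathbf v}=(\widehat R_k\bm\varphi_k)^{\mathsf T}\widehat{\mathbf v}=\bm\varphi_k^{\mathsf T}\bigl(\widehat R_k^{\mathsf T}\widehat{\mathbf v}\bigr).$$

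Next I would compare this expression with the defining relation $\widehat{\mathcal P}_k\widehat v=\bm\varphi_k^{\mathsf T}(\widehat P_k\widehat{\mathbf v})$ of the matrix representation. Subtracting, $\bm\varphi_k^{\mathsf T}(\widehat P_k-\widehat R_k^{\mathsf T})\widehat{\mathbf v}=0$ for every $\widehat{\mathbf v}\in\mathbb R^{N_k}$. By Lemma \ref{phi_li} the family $\{\varphi_{k,i}:x_i\in X_k\}$ is linearly independent, so a combination $\bm\varphi_k^{\mathsf T}\mathbf w$ vanishes only when $\mathbf w=\mathbf 0$; taking $\mathbf w=(\widehat P_k-\widehat R_k^{\mathsf T})\widehat{\mathbf v}$ and letting $\widehat{\mathbf v}$ run over the canonical basis of $\mathbb R^{N_k}$ forces $\widehat P_k=\widehat R_k^{\mathsf T}$, which is the claim.

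The computation is elementary; the only point I would treat with some care is that the representative $\widehat{\mathbf v}$ of $\widehat v$ in Remark \ref{weird} is not unique, so I should verify that the conclusion does not depend on the choice. This is immediate from the block structure in \eqref{matrix_vector}: the last block row of $\widehat R_k$ is zero, hence the last block column of $\widehat R_k^{\mathsf T}$ is zero and $\widehat R_k^{\mathsf T}\widehat{\mathbf v}=[\widehat{\mathbf v}_{\mathcal I},\ \widehat{\mathbf v}_{\mathcal M},\ \Phi_k^{\mathsf T}\widehat{\mathbf v}_{\mathcal M}]^{\mathsf T}$ depends only on $\widehat{\mathbf v}_{\mathcal I}$ and $\widehat{\mathbf v}_{\mathcal M}$, never on the arbitrary hanging-node entries $\widehat{\mathbf v}_{\mathcal H}$; moreover the output has precisely the shape identified in Remark \ref{same_representation}, which re-confirms that $\widehat{\mathcal P}_k\widehat v$ lies in $V_k$ with a well-defined coefficient vector. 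I expect no genuine obstacle beyond this bookkeeping.
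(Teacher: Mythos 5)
Your proposal is correct and follows essentially the same route as the paper: regard $\widehat v$ as an element of $V_k$ via the natural injection, substitute $\widehat{\bm\varphi}_k=\widehat R_k\bm\varphi_k$ from Eq.~\eqref{matrix_vector}, and read off $\widehat P_k=\widehat R_k^{\mathsf T}$. Your explicit appeal to Lemma~\ref{phi_li} to equate coefficient vectors, and your check that the arbitrary hanging-node entries of $\widehat{\mathbf v}$ do not affect the result, are left implicit in the paper but are consistent with it and add only harmless bookkeeping.
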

\begin{proof}
Since $\widehat v \in \widehat V_k \subseteq V_k$,
its prolongation into $V_k$ is the natural injection, namely
$${\bm \varphi}^{\mathsf{T}}_k \left(\widehat P_k \widehat {\mathbf{v}}\right)=
\widehat{\mathcal{P}}_k \widehat v 
= \widehat v 
= {\widehat {\bm \varphi}_k}^{\mathsf{T}} \widehat {\mathbf{v}}
= \left({\widehat R_k {\bm \varphi}_k}\right)^{\mathsf{T}} \widehat {\mathbf{v}}  = 
 {\bm \varphi}^{\mathsf{T}}_k \left(\widehat R^{\mathsf{T}}_k \widehat {\mathbf{v}}\right), 
 \mbox{ for all } \widehat v \in \widehat V_k.$$
Thus, the matrix representation of $\widehat{\mathcal{P}}_k$ is given by $\widehat P_k = \widehat R^\mathsf{T}_k$.
\end{proof}

\begin{definition}
Let  $\langle \cdot , \cdot \rangle$ denote the $L^2(\Omega)$ inner product.
The restriction operator $\widehat{\mathcal R}_k : V_k \rightarrow \widehat V_k$ is defined as the adjoint
of $\widehat{\mathcal P}_k$ with respect to the $L^2(\Omega)$ inner product. In other words
$$\langle \widehat {\mathcal R}_k v,\widehat u\rangle = \langle v, \widehat {\mathcal P}_k \widehat u\rangle, \mbox{ for all } v\in V_k,\; \widehat u \in \widehat V_k.$$
Clearly, its matrix representation is the matrix $\widehat{R}_k$ from Eq. \eqref{matrix_vector}.
\end{definition}

Note that if we choose $\widehat u = \widehat {\varphi}_{k,i}$ for some $i$, then 
$$
\widehat {\mathcal P}_k \widehat \varphi_{k,i} 
= \widehat \varphi_{k,i}  
= \widehat R_{k,i}\, {\bm \varphi}_k,$$
where $\widehat R_{k,i}$ corresponds to the $i$-th row of the matrix $\widehat R_k$ defined in \eqref{matrix_vector}.
Consequently, for any $v$ in $V_k$
\begin{equation}  \label{restrictioni}
\langle \widehat {\mathcal R}_k v,\widehat \varphi_{k,i}\rangle 
= \langle v, \widehat {\mathcal P}_k \widehat \varphi_{k,i}\rangle 
=  \langle v, \widehat R_{k,i} \bm \varphi_{k}\rangle = \sum_{j=1}^{N_k} \widehat R _{k,ij} \langle v,\varphi_{k,j}\rangle.
\end{equation}
Let $g \in H^{-1}(\Omega)$ be given and define in an entry-wise fashion the vectors $\widehat {\mathbf{f}}$ and $\mathbf{f}$ 
$$ \widehat {\mathbf{f}}_i(g) = \langle g ,\widehat \varphi_{k,i}\rangle, \mbox{ and } 
   \mathbf{f}_i(g) = \langle g, \varphi_{k,i}\rangle, $$ for all $i=1,\cdots, N_k$. Then Eq.~\eqref{restrictioni}
   can be rewritten as
   $$ \widehat {\mathbf{f}}_i(\widehat {\mathcal R}_k v) = \sum_{j=1}^{N_k} \widehat R _{k,ij} \mathbf{f}_j(v) = \widehat R _{k,i} \mathbf{f}(v),$$ for all $i=1,\cdots, N_k$.
Then in matrix-vector notation, for any $v$ in $V_k$
$$\widehat {\mathbf{f}}(\widehat {\mathcal R}_k v) = \widehat R_{k} \mathbf{f}(v).$$

\begin{lemma}\label{matrixAhat}
For any bilinear form $a(\cdot,\cdot)$, define the matrices $A_k$ and $\widehat{A}_k$ by
\begin{align*}
& A_{k,{ij}} = a(\varphi_{k,i},\varphi_{k,j}), \mbox{ for all } i,\,j=1,\cdots,N_k,\\ 
& \widehat A_{k,{ij}} =  a(\widehat\varphi_{k,i},\widehat\varphi_{k,j}), \mbox{ for all } i,\,j=1,\cdots, N_k.
\end{align*}
Then  
$ \widehat A_k  = \widehat R_k A_k \widehat R_k^{\mathsf{T}}= \widehat R_k A_k \widehat P_k.$
Namely, with a slight abuse of terminology, $\widehat A_k$ is the restriction of $A_k$ to the space $\widehat V_k$.
\end{lemma}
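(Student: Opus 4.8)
The plan is to unwind everything back to the single matrix identity $\widehat{\bm\varphi}_k = \widehat R_k\bm\varphi_k$ from Eq.~\eqref{matrix_vector}, which expresses every continuous basis function as an explicit linear combination of the discontinuous ones, $\widehat\varphi_{k,i} = \sum_{p=1}^{N_k}\widehat R_{k,ip}\,\varphi_{k,p}$. Substituting this into the definition of $\widehat A_{k,ij}$ and invoking bilinearity of $a(\cdot,\cdot)$ in each argument separately should collapse the expression directly into the desired matrix triple product.

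Concretely, first I would write, for fixed $i,j \in \{1,\dots,N_k\}$,
\begin{align*}
\widehat A_{k,ij} = a(\widehat\varphi_{k,i},\widehat\varphi_{k,j})
= a\Big(\sum_{p=1}^{N_k}\widehat R_{k,ip}\,\varphi_{k,p},\ \sum_{q=1}^{N_k}\widehat R_{k,jq}\,\varphi_{k,q}\Big).
\end{align*}
Then I would pull both (finite) sums out using linearity in the first and second slot, obtaining
\begin{align*}
\widehat A_{k,ij} = \sum_{p=1}^{N_k}\sum_{q=1}^{N_k}\widehat R_{k,ip}\,\widehat R_{k,jq}\,a(\varphi_{k,p},\varphi_{k,q})
= \sum_{p=1}^{N_k}\sum_{q=1}^{N_k}\widehat R_{k,ip}\,A_{k,pq}\,\widehat R_{k,jq},
\end{align*}
which is precisely the $(i,j)$ entry of $\widehat R_k A_k \widehat R_k^{\mathsf{T}}$. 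Since this holds for every $i,j$, the first claimed equality $\widehat A_k = \widehat R_k A_k \widehat R_k^{\mathsf{T}}$ follows, and the second equality $\widehat R_k A_k \widehat R_k^{\mathsf{T}} = \widehat R_k A_k \widehat P_k$ is immediate from the earlier proposition stating $\widehat P_k = \widehat R_k^{\mathsf{T}}$.

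Honestly, there is no serious obstacle here: the only thing to be careful about is bookkeeping with the index ranges, noting that the rows of $\widehat R_k$ indexed by hanging nodes vanish by \eqref{matrix_vector}, so the double sum is effectively supported on interior and master indices — but this requires no separate argument since the zero entries simply contribute nothing. The final remark about $\widehat A_k$ being ``the restriction of $A_k$ to $\widehat V_k$'' needs no proof; it is just terminology consistent with $\widehat{\mathcal R}_k$ and $\widehat{\mathcal P}_k$ being the $L^2$-adjoint pair of inter-space operators, so I would simply state it.
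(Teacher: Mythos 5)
Your proposal is correct and follows essentially the same route as the paper: expand $\widehat\varphi_{k,i}=\widehat R_{k,i}\bm\varphi_k$ via Eq.~\eqref{matrix_vector}, use bilinearity of $a(\cdot,\cdot)$ to identify the $(i,j)$ entry with $\widehat R_{k,i} A_k \widehat R_{k,j}^{\mathsf{T}}$, and conclude $\widehat A_k=\widehat R_k A_k\widehat R_k^{\mathsf{T}}=\widehat R_k A_k\widehat P_k$ using $\widehat P_k=\widehat R_k^{\mathsf{T}}$. Nothing further is needed.
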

\begin{proof}
For all $i,j$ 
\begin{align*}
\widehat A_{k,{ij}}& =  a(\widehat\varphi_{k,i},\widehat\varphi_{k,j}) = a(\widehat R_{k,i} \bm \varphi_{k},\widehat R_{k,j} \bm \varphi_{k}) 
= \sum_{l=1}^{N_k} \sum_{m=1}^{N_k} \widehat R_{k,il}  a( \varphi_{k,l}, \varphi_{k,m}) \widehat R_{k,jm} \\[-2ex]
&= \sum_{l=1}^{N_k} \sum_{m=1}^{N_k} \widehat R_{k,il} A_{k,lm} \widehat R_{k,jm} =
 \sum_{l=1}^{N_k} \sum_{m=1}^{N_k} \widehat R_{k,il} A_{k,lm} \widehat R^{\mathsf{T}}_{k,mj}=
\widehat R_{k,i} A_k \widehat R_{k,j}^\mathsf{T}, 
\end{align*}
thus the result follows.
\end{proof}

The rows and the columns corresponding to the hanging nodes are not removed but zeroed. 
In order to get a nonsingular matrix, the diagonal entries of $\widehat A_k$ corresponding to the hanging nodes are set equal to one.
%

Note that from Definition \ref{alg1} and \ref{alg2} it follows that 
$$V_1 \subseteq V_2 \subseteq \ldots \subseteq V_k \qquad \mbox{and} \qquad \widehat{V}_1 \subseteq \widehat{V}_2 \subseteq \ldots \subseteq\widehat{V}_k.$$
For this reason, prolongation operators can be defined.
\new{
\begin{definition}
The prolongation operator $\mathcal {Q}_k: V_{k-1} \rightarrow V_{k}$ is the natural injection between $V_{k-1}$ and $V_{k}$, while
the prolongation operator $ \widehat{\mathcal{Q}}_k: \widehat{V}_{k-1} \rightarrow \widehat{V}_{k}$ is the natural injection 
between $\widehat V_{k-1}$ and $\widehat V_{k}$. The respective matrix representations are denoted by
$Q_k$ and $\widehat{Q}_k$.
\end{definition}
Since $\widehat{V}_{k-1} \subseteq {V}_{k-1} \subseteq V_k $, it immediately follows that 
the prolongation operator $\mathcal {Q}_k\mathcal {\widehat P}_{k-1}:\widehat{V}_{k-1} \rightarrow V_{k}$ 
with matrix representation given by ${Q}_k {\widehat P}_{k-1}$ is the natural injection between 
$\widehat{V}_{k-1}$ and ${V}_k$.
}
\begin{lemma} \label{lemmaProj}
The natural injection from $\widehat{V}_{k-1}$ to $V_{k}$ can be obtained 
as a composition of the two prolongations 
$\mathcal {\widehat P}_{k-1}: \widehat{V}_{k-1} \rightarrow V_{k-1}$ and 
$\mathcal {Q}_k: {V}_{k-1} \rightarrow V_k$
$$ a = \mathcal {Q}_k(\mathcal {\widehat P}_{k-1}(\widehat {v}_{k-1})) = \widehat {v}_{k-1}, \mbox{ for all } \widehat {v}_{k-1} \in \widehat {V}_{k-1},$$
or in vector-matrix notation
$ \mathbf{a} =  {Q}_k {\widehat P}_{k-1}\widehat {\mathbf v}_{k-1},$
\new{where $\mathbf{a}$ is the vector representation of $a$ in $V_k$.}
\end{lemma}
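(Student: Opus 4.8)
The plan is to argue that this lemma is essentially a restatement of the fact that a composition of natural injections is again a natural injection, combined with the matrix identities already assembled in the excerpt. First I would recall that, since $\widehat V_{k-1} \subseteq V_{k-1}$, the operator $\widehat{\mathcal P}_{k-1}$ acts as the identity on functions: for $\widehat v_{k-1} \in \widehat V_{k-1}$ one has $\widehat{\mathcal P}_{k-1}(\widehat v_{k-1}) = \widehat v_{k-1}$, now regarded as an element of $V_{k-1}$. Likewise, the nesting $V_{k-1} \subseteq V_k$, which follows from Definitions \ref{alg1} and \ref{alg2}, means $\mathcal Q_k$ is the identity on functions as well, so $\mathcal Q_k(v) = v$ for every $v \in V_{k-1}$. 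Composing the two gives $\mathcal Q_k(\widehat{\mathcal P}_{k-1}(\widehat v_{k-1})) = \mathcal Q_k(\widehat v_{k-1}) = \widehat v_{k-1}$, which is exactly the function-level identity $a = \widehat v_{k-1}$ in the statement, with $a$ now viewed inside $V_k$.

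For the matrix-vector form I would track the coefficient vectors through the two maps. Applying $\widehat{\mathcal P}_{k-1}$ sends the coefficient vector $\widehat{\mathbf v}_{k-1}$ of $\widehat v_{k-1}$ (in the augmented basis of $\widehat V_{k-1}$ of Remark \ref{weird}) to $\widehat P_{k-1}\widehat{\mathbf v}_{k-1}$, the coefficient vector of the same function in the basis $\{\varphi_{k-1,i}\}$ of $V_{k-1}$, where $\widehat P_{k-1} = \widehat R_{k-1}^{\mathsf T}$. Applying $\mathcal Q_k$ then sends this to $Q_k(\widehat P_{k-1}\widehat{\mathbf v}_{k-1})$, the coefficient vector of that function in the basis $\{\varphi_{k,j}\}$ of $V_k$. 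Hence $\mathbf a = Q_k \widehat P_{k-1}\widehat{\mathbf v}_{k-1}$, consistent with the observation preceding the lemma that $Q_k\widehat P_{k-1}$ is precisely the matrix representation of the natural injection $\widehat V_{k-1}\to V_k$.

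There is no serious obstacle here; the only points requiring care are bookkeeping ones. One must make sure the nesting $V_{k-1}\subseteq V_k$ is genuinely available, i.e. that each $\varphi_{k-1,i}$ admits an expansion in $\{\varphi_{k,j}\}$, which is exactly what the matrix $Q_k$ encodes, and that the use of the non-unique augmented representation of $\widehat v_{k-1}$ from Remark \ref{weird} does not affect the resulting function (it does not, since the extra coefficients multiply functions that are identically zero). With these remarks in place the proof reduces to the two short displays above.
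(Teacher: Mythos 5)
Your argument is correct and coincides with the paper's treatment: the paper gives no separate proof, regarding the lemma as immediate from the nesting $\widehat{V}_{k-1} \subseteq V_{k-1} \subseteq V_k$ and the fact that a composition of natural injections is a natural injection, with $Q_k \widehat{P}_{k-1}$ as its matrix representation. Your extra bookkeeping (tracking coefficient vectors and noting that the non-unique augmented representation of Remark \ref{weird} is harmless) simply makes explicit what the paper leaves implicit.
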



\begin{proposition} 
The matrix \,${Q}_k {\widehat P}_{k-1}$ can be also used as matrix representation 
of the prolongation $\mathcal {\widehat Q}_k : \widehat V_{k-1} \rightarrow \widehat V_k$.
\end{proposition}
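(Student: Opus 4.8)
The plan is to combine Lemma \ref{lemmaProj} with Proposition \ref{magheggio} and Remark \ref{same_representation}. Fix an arbitrary $\widehat v_{k-1} \in \widehat V_{k-1}$ and let $\widehat{\mathbf v}_{k-1}$ be its coefficient vector relative to the basis of $\widehat V_{k-1}$ (extended by zero hanging-node functions as in Remark \ref{weird}). By definition $\widehat{\mathcal Q}_k$ is the natural injection, so $\widehat{\mathcal Q}_k \widehat v_{k-1} = \widehat v_{k-1}$, now viewed as an element of $\widehat V_k$. The goal is to show that the vector $\mathbf a := Q_k \widehat P_{k-1} \widehat{\mathbf v}_{k-1}$ is a legitimate coefficient representation of this element in $\widehat V_k$, which is exactly the assertion that $Q_k \widehat P_{k-1}$ represents $\widehat{\mathcal Q}_k$.

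First I would invoke Lemma \ref{lemmaProj}: since $\widehat V_{k-1} \subseteq V_{k-1} \subseteq V_k$, the vector $\mathbf a = Q_k \widehat P_{k-1} \widehat{\mathbf v}_{k-1}$ is precisely the coefficient representation of the function $a = \widehat v_{k-1}$ in the basis $\{\varphi_{k,i}\}$ of $V_k$. Writing $\mathbf a = [\mathbf a_{\mathcal I}, \mathbf a_{\mathcal M}, \mathbf a_{\mathcal H}]^{\mathsf T}$ in the interior/master/hanging block ordering, it remains only to identify the hanging block. Because $a = \widehat v_{k-1}$ also lies in $\widehat V_k$ (as $\widehat V_{k-1} \subseteq \widehat V_k$), it admits a representation $\widehat b = \sum_{i: x_i \in \mathcal I_k \cup \mathcal M_k} \widehat{b}_i \widehat\varphi_{k,i} \in \widehat V_k$ with $a = \widehat b$; applying Proposition \ref{magheggio} to this pair then forces $\mathbf a_{\mathcal I} = \widehat{\mathbf b}_{\mathcal I}$, $\mathbf a_{\mathcal M} = \widehat{\mathbf b}_{\mathcal M}$, and in particular $\mathbf a_{\mathcal H} = \Phi_k^{\mathsf T} \mathbf a_{\mathcal M}$. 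Having established that $\mathbf a$ has this compatible structure, Remark \ref{same_representation} applies verbatim: the same vector $\mathbf a$ may be taken as the coefficient representation of $a$ in $\widehat V_k$. Since $a = \widehat{\mathcal Q}_k \widehat v_{k-1}$ and $\widehat v_{k-1}$ was arbitrary, the action of $\widehat{\mathcal Q}_k$ is realized in coordinates by left multiplication by $Q_k \widehat P_{k-1}$.

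The only delicate point is the non-uniqueness of coordinate representations in $\widehat V_k$ recalled in Remark \ref{weird}: a priori $\widehat{\mathcal Q}_k$ has not a single matrix but a whole class of matrices differing in the rows associated with hanging nodes. What the argument really shows is that $Q_k \widehat P_{k-1}$ is the distinguished member of that class whose hanging-node rows are chosen consistently, namely so that $\mathbf a_{\mathcal H} = \Phi_k^{\mathsf T} \mathbf a_{\mathcal M}$; this is precisely the choice that makes the triangle formed by $\widehat{\mathcal P}_{k-1}$, $\mathcal Q_k$ and $\widehat{\mathcal P}_k$ commute, and it is what one wants for the multigrid construction that follows. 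I expect the write-up itself to be short, the main conceptual care being to keep straight which basis each coefficient vector refers to and to handle correctly the freedom in the hanging-node entries.
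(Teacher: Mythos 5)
Your proposal is correct and follows essentially the same route as the paper: Lemma \ref{lemmaProj} identifies $\mathbf a = Q_k \widehat P_{k-1}\widehat{\mathbf v}_{k-1}$ as the representation of $\widehat v_{k-1}$ in $V_k$, and Remark \ref{same_representation} then lets the same vector serve as its representation in $\widehat V_k$. Your extra invocation of Proposition \ref{magheggio} to verify explicitly that $\mathbf a_{\mathcal H} = \Phi_k^{\mathsf T}\mathbf a_{\mathcal M}$ before applying the remark is a small but welcome tightening of the step the paper takes implicitly.
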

\begin{proof}
Let $\widehat {\mathbf v}_{k-1}$  be the vector representation of any $\widehat {v}_{k-1} \in \widehat {V}_{k-1}$. 
We want to show that if $\widehat{\mathbf{b}} \equiv {Q}_k {\widehat P}_{k-1} \widehat {\mathbf v}_{k-1}$, then the following equality holds,
$$ \widehat{b} \equiv {\widehat {\bm \varphi}_k}^T \widehat{\mathbf{b}} = \widehat {v}_{k-1}. $$
From Lemma \ref{lemmaProj}, the vector $\mathbf{a} = {Q}_k {\widehat P}_{k-1}\widehat {\mathbf v}_{k-1} $
is the vector representation of $a\in V_k$, with $a = \widehat {v}_{k-1}$.
Since $a = \widehat v_{k-1} \in \widehat{V}_{k-1} \subseteq \widehat{V}_{k} \subseteq V_k $, 
$a$ is also an element of $\widehat{V}_{k}$.
According to Remark \ref{same_representation} we can 
choose $\mathbf{a}$ as vector representation of $a$ in $\widehat V_k$.

Then, we have two elements $\widehat b$ and $a$ in $\widehat V_k$ with the same vector representation, 
$\widehat{\mathbf{ b}} = \mathbf{a}$, thus they are equal. This means $\widehat b = a = \widehat v_{k-1}$.
\end{proof}

Similarly, as we did before for the nested spaces $V_k$ and $\widehat V_k$, we can show that the matrix representation
of the restriction operator between $\widehat V_k$ and $\widehat V_{k-1}$ is $\widehat Q_k^{\mathsf{T}}$,
the transpose of the matrix representation of the prolongation operator.

\begin{lemma}
For any bilinear form $a(\cdot,\cdot)$, let $\widehat{A}_k$ be the matrix defined in Lemma \ref{matrixAhat} and let $\widehat{A}_{k-1}$ be
$$\widehat A_{{k-1},{ij}} =  a(\widehat\varphi_{{k-1},i},\widehat\varphi_{{k-1},j}), \mbox{ for all } i,\,j=1,\cdots, N_{k-1}.$$
Then,  
$ \widehat A_{k-1} = \widehat Q_k^{\mathsf{T}} \widehat A_k \widehat Q_k,$
namely, with a slight abuse of terminology, $\widehat A_{k-1}$ is the restriction of $\widehat A_{k}$ to the space $\widehat V_{k-1}$.

\end{lemma}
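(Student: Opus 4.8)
The plan is to replay the argument of Lemma~\ref{matrixAhat}, now applied to the natural injection $\widehat{\mathcal Q}_k:\widehat V_{k-1}\to\widehat V_k$ between the nested spaces $\widehat V_{k-1}\subseteq\widehat V_k$, with the prolongation matrix $\widehat Q_k$ playing the role that $\widehat R_k$ played there. The single structural ingredient needed is an expansion of each coarse continuous basis function in the fine continuous basis. Recall that, by the proposition shown above, the matrix $Q_k\widehat P_{k-1}$ serves as a matrix representation of $\widehat{\mathcal Q}_k$; denoting it $\widehat Q_k$, and observing that the coordinate vector of $\widehat\varphi_{k-1,i}$ in $\widehat V_{k-1}$ is the $i$-th canonical vector while $\widehat{\mathcal Q}_k$ acts as the identity on functions, the coordinate vector of $\widehat\varphi_{k-1,i}$ in $\widehat V_k$ is the $i$-th column of $\widehat Q_k$, i.e.
\[
\widehat\varphi_{k-1,i}=\sum_{l=1}^{N_k}(\widehat Q_k)_{li}\,\widehat\varphi_{k,l},\qquad i=1,\dots,N_{k-1}.
\]

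First I would record this identity, stressing that it merely restates the preceding proposition together with the fact that the natural injection alters the coordinate description of a function but not the function itself. Second, I would substitute the expansion into the definition of $\widehat A_{k-1,ij}$ and expand using the bilinearity of $a(\cdot,\cdot)$:
\[
\widehat A_{k-1,ij}
=a\!\left(\sum_{l=1}^{N_k}(\widehat Q_k)_{li}\widehat\varphi_{k,l},\ \sum_{m=1}^{N_k}(\widehat Q_k)_{mj}\widehat\varphi_{k,m}\right)
=\sum_{l=1}^{N_k}\sum_{m=1}^{N_k}(\widehat Q_k)_{li}\,\widehat A_{k,lm}\,(\widehat Q_k)_{mj}.
\]
Rewriting $(\widehat Q_k)_{li}=(\widehat Q_k^{\mathsf T})_{il}$, the double sum is precisely the $(i,j)$ entry of $\widehat Q_k^{\mathsf T}\widehat A_k\widehat Q_k$, which yields the claim $\widehat A_{k-1}=\widehat Q_k^{\mathsf T}\widehat A_k\widehat Q_k$ and, with the stated abuse of terminology, exhibits $\widehat A_{k-1}$ as the restriction of $\widehat A_k$ to $\widehat V_{k-1}$.

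I do not expect any genuine obstacle here: this is the same abstract Galerkin-projection identity already established in Lemma~\ref{matrixAhat}, and once the basis expansion above is written down the proof collapses to a one-line bilinearity computation. The points that deserve a word of care are purely bookkeeping ones. The matrices $\widehat A_{k-1}$ and $\widehat A_k$ are indexed over the full node sets, so the rows and columns attached to hanging nodes vanish identically, since the corresponding $\widehat\varphi$ are zero; the identity is stated for these raw matrices, prior to the implementation step, described above, of resetting the hanging-node diagonal entries to one. One should also check that the dimensions match --- $\widehat Q_k$ is $N_k\times N_{k-1}$, hence $\widehat Q_k^{\mathsf T}\widehat A_k\widehat Q_k$ is $N_{k-1}\times N_{k-1}$ --- which is automatic. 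A quick sanity check is furnished by the relation $\widehat Q_k=Q_k\widehat P_{k-1}$, which makes the lemma consistent with composing the level prolongation $\mathcal{Q}_k$ with the space prolongation $\widehat{\mathcal P}_{k-1}$ and with the level-$k$ identity $\widehat A_k=\widehat R_k A_k\widehat R_k^{\mathsf T}$ of Lemma~\ref{matrixAhat}.
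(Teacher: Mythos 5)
Your proposal is correct and follows essentially the same route as the paper, which simply states that the proof is analogous to Lemma~\ref{matrixAhat}: expand each coarse continuous basis function in the fine continuous basis via the columns of $\widehat Q_k$ and apply bilinearity of $a(\cdot,\cdot)$. Your added bookkeeping remarks (zero hanging-node functions making the nonunique hanging-node coordinates harmless, and the identity holding for the raw matrices before the diagonal entries at hanging nodes are reset to one) are consistent with the paper's conventions.
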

\begin{proof}
 The proof is similar to the one in Lemma \ref{matrixAhat}.
\end{proof}

Once again, the diagonal entries of $\widehat A_{k-1}$ corresponding to the hanging nodes are set equal to one.

Schematics of all the inter-space operators defined above, and how to use them, are given in Figure \ref{mg1}. 
\new{All the prolongation and restriction operators are global and are implemented in the FEMuS library \cite{femus-web-page} using PETSc sparse parallel matrices \cite{petsc-web-page}. The restriction operator $\widehat{R}_k$ from $V_k$ to $\widehat{V}_k$ is pre-computed when the irregular mesh is generated since it depends only on the mesh and the finite element family. Similarly, the prolongation operator $Q_k$ from $V_{k-1}$ to $V_k$ is pre-computed once the meshes at the two consecutive refinement levels are available, since it also depends only on the mesh and the finite element family.
The prolongation operator $\widehat{Q}_k$ is given by the product between the two matrices and is pre-computed as well.}
\begin{figure}[!t]
\centering
\includegraphics[scale=0.45]{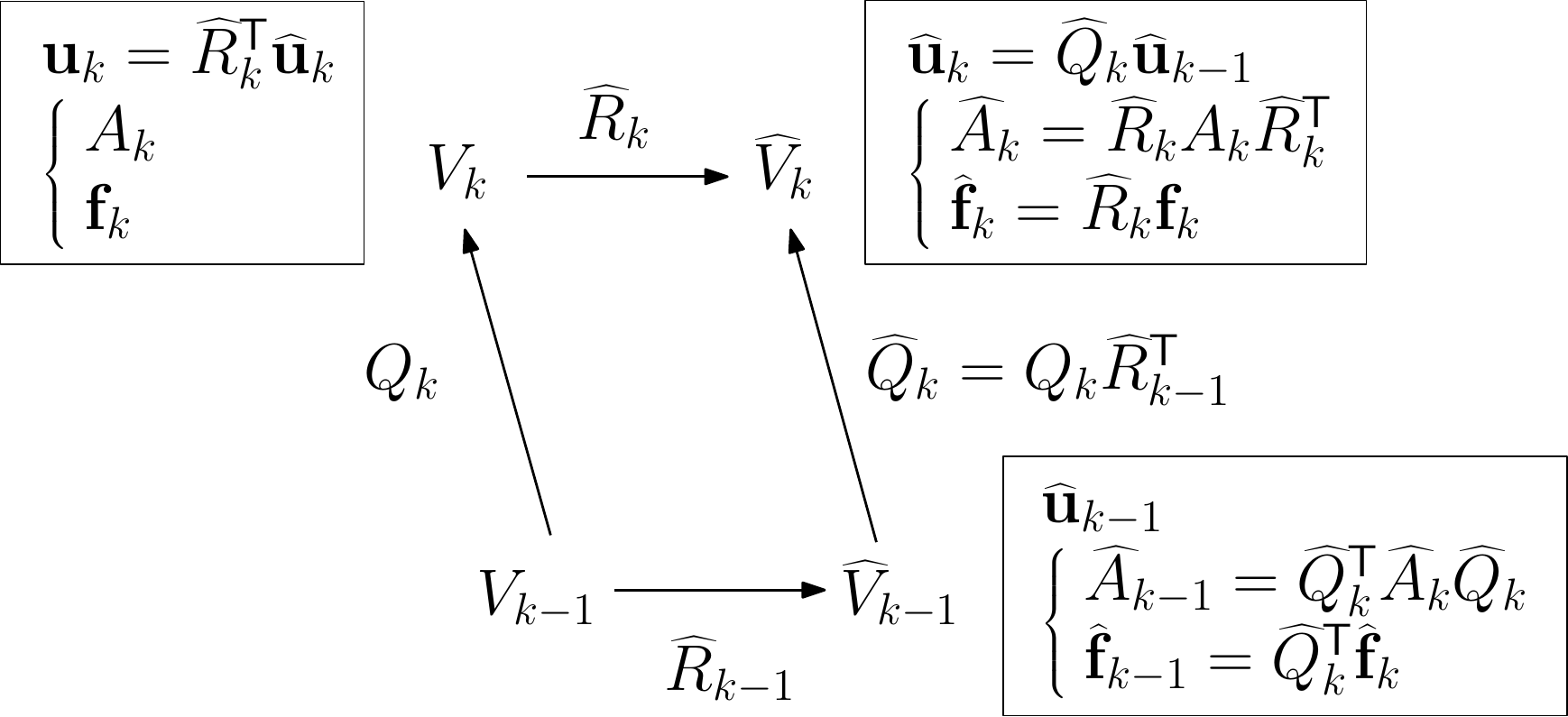}
\caption{Inter-space operators }\label{mg1}
\end{figure}

\subsection{The numerical algorithm} \label{num_alg}
Once the inter-space operators are available in terms of matrices, the numerical solution of the problem
$ a(u,v) = \langle f,v\rangle,$
is sought in the continuous space $\widehat V_J$. Namely, we seek the solution of the discretized problem
$ \widehat A_J \widehat{\mathbf{u}}_J = \widehat {\mathbf f}_J.$
We do not work directly in the continuous space $\widehat V_J$.
We rather work in the discontinuous space $V_J$ 
and use the restriction operator $\widehat R_J$ to move objects between spaces. 
Since an iterative solver is used the above system is rewritten in its residual form. 

For $i=0$, let $\widehat{\mathbf{u}}_J^0$ be the initial guess, then
\begin{enumerate}[leftmargin= * ]
\item assemble the matrix $A_J$ in $V_J$ using the test functions $\bm \varphi_{J}$;
\item restrict $A_J$ in the $\widehat V_J$ space
$$\widehat A_J = \widehat R_J A_J \widehat R_J^{\mathsf{T}};$$
\item prolong the current solution in the $V_J$ space
$$\mathbf{u}_J^i = \widehat R_J^{\mathsf{T}}\widehat{\mathbf{u}}_J^i;$$
\item assemble the residual in $V_J$
$$\mathbf r_J^i = \mathbf f_J - A_J \mathbf{u}_J^i;$$
\item restrict the residual to $\widehat V_J$
$$\widehat{\mathbf r}_J^i = \widehat{R}_J \mathbf r_J^i; $$
\item Let $\widehat{D}_J$ be an ``\textit{easy to invert}'' approximation of the matrix $\widehat A_J$, then 
solve $$ \widehat{D}_J \widehat{\mathbf{w}}_J^{i} = \widehat{\mathbf{r}}_J^{i} \mbox{, and set }
\widehat{\mathbf{u}}_J^{i+1} = \widehat{\mathbf{u}}_J^{i} + \widehat{\mathbf{w}}_J^{i}; $$
\item if $\|\widehat {\mathbf{w}}_J^{i}\|\le \varepsilon$ exit, otherwise 
set $i=i+1$ and go back to step 3.
\end{enumerate}
A few remarks on the above algorithm:

We construct the matrix $A_J$ and the residual $\mathbf r_J$ in $V_J$, since the assembling is done element-wise following
the standard finite element approach for unstructured grids. Moreover, let $x_i$, $x_j$ in $X_J^l$ be two nodal
points belonging to a generic element $E$ of the triangulation $\mathcal T_J^l$. The two pairs of test functions 
$\varphi_{J,i},\varphi_{J,j}$ and $\varphi^l_{J,i},\varphi^l_{J,j}$ 
differ at most on the boundary of the element $E$.  Then on $E$
\begin{align*}
a(\varphi_{J,i},\varphi_{J,j})\big|_{E} = a(\varphi^l_{J,i},\varphi^l_{J,j})\big|_{E},&&\mbox{and }&&
 \langle f, \varphi_{J,i} \rangle\big|_{E} = \langle f, \varphi^l_{J,i} \rangle\big|_{E},
\end{align*}
for all $f\in H^{-1}(\Omega)$. Thus to construct $A_J$ and $\mathbf r_J$ element-wise we use the standard 
test functions $\varphi^l_{J,i}$, which is a very convenient approach since no new bases have to be really constructed.

The operator $\widehat D_J$ is abstract, and its inversion can be interpreted as the action of a linear algebra solver 
which in turn can also be preconditioned.
In our applications, we used geometric multigrid (MG) either as a solver (in Section \ref{eigAnalysis}) 
or as a preconditioner for the conjugate gradient (CG) method and the generalized minimal residual (GMRES) (in Sections \ref{numex1} and \ref{numex2}).
For this purpose we used the C++ library PETSc \cite{petsc-web-page} as a black box, 
providing  as inputs the matrices $\widehat A_k$ for $k=J,\dots,0$, the matrices $\widehat Q_k$ for $k=J,\dots,1$ 
and the residual vector $\widehat{\mathbf r}^i_J$,
and letting PETSc compute $\widehat{\mathbf w}_J^i$. This algebraic approach is convenient and elegant since all the burden 
of dealing with the hanging nodes at different level triangulations $\mathcal T_k$ has been incorporated in 
the prolongation operator $\widehat Q_k$
and no ad-hoc multigrid solver or preconditioner needs to be implemented.

The above algorithm can be easily  extended to nonlinear problems. 
In this case the residual $\mathbf r_J^i$ at step 4 is a more complex function of $\mathbf u_J^i$ and $\widehat D_J$ is 
an ``\textit{easy to invert}'' approximation of the tangent matrix  
$$\widehat J_J(\widehat {\mathbf u}_J^i) = 
\widehat R_J \left( -\frac{\partial  {\mathbf r}_J^i}{\partial {\mathbf u}_J^i}\right) \widehat R_J^{\mathsf{T}}.$$
\new{This corresponds to a Newton-Raphson iterative scheme \cite{ke2017block}}. Such a scheme is used in the last example 
where a Navier-Stokes flow is considered.

\section{Eigenvalue analysis of the multigrid method}\label{eigAnalysis}
Historically, the theory of multigrid methods has been developed for linear equations of the form
$A u = f,$
where the operator $A$ is symmetric and positive definite (SPD)
\cite{hackbusch2013multi, bramble1993multigrid, yserentant1993old, bramble1987new, xu1992iterative, bramble1991convergence}.
Other types of boundary value problems have also been addressed, where $A$ is nonsymmetric or indefinite
\cite{olshanskii2004convergence, reusken2002convergence, wu2006analysis, bramble1988analysis}.

In \cite{bramble1991convergence}, the authors obtained convergence estimates for a multigrid algorithm without making 
regularity assumptions on the solution. 
This was a breakthrough in the theoretical analysis of multigrid, since previous convergence estimates used to rely on 
both a smoothing property, and an approximation property,
where the latter is typically obtained assuming a certain degree of regularity.
Moreover, in the same paper, it was shown that convergence of the multigrid algorithm could be obtained also when smoothing
is performed only on a subspace of the multigrid space at every level.
This allows applying the theory proposed in \cite{bramble1991convergence} to local midpoint refinement applications,
where the smoothing is performed on subspaces that do not have degrees of freedom associated with the interface nodes.
This includes both hanging and master nodes.
A weaker convergence estimate with respect to the classical multigrid convergence \cite{brenner2007mathematical} was obtained, in the sense that the error bound $\delta_J$ for the multigrid error $E_J$, depended on the number
of multigrid levels $J$, 
\begin{align*}
& a(E_Jv,v) \le \delta_J a(v,v) , \mbox{ with } \delta_J = 1-(C\,J)^{-1},
\end{align*}
where is $a(\cdot,\cdot)$ is the energy norm and $C$ is a constant independent of $J$.
It is clear when looking at $\delta_J$ that, from a theoretical point of view, increasing the pre and/or post-smoothing iterations at each level does not guarantee an improvement of the convergence bound.
From a practical point of view as well, since smoothing is done only on subspaces of the actual multigrid spaces,
an increase in the number of smoothing steps can only improve the convergence bound up to some saturation value.
In this work, using the continuous nodal basis functions in Definition \ref{alg2}, we are allowed to perform
the smoothing procedure on the entire multigrid space. This results in an improved convergence when the number of smoothing
steps is increased, as it will be shown from the numerical results.
For the rest of the paper, we refer to the local smoothing approach outlined in \cite{bramble1991convergence} as BPWX algorithm after the authors: Bramble, Pasciak, Wang and Xu.

\begin{figure}[!t] 
\begin{minipage}{1.0\textwidth}
\begin{center}
\begin{minipage}{0.32\textwidth}
\begin{center}
\includegraphics[width=.8\linewidth]{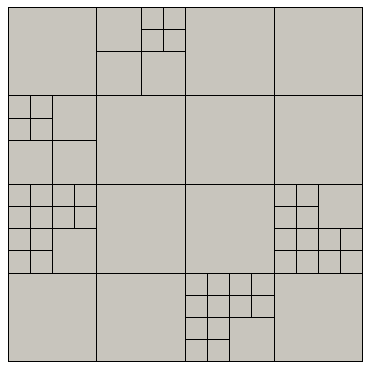}\\
(a) level 3 
\end{center}
\end{minipage}
\begin{minipage}{0.32\textwidth}
\begin{center}
\includegraphics[width=.8\linewidth]{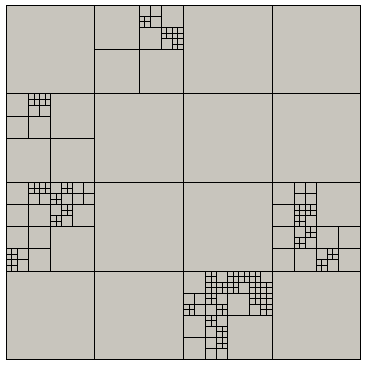} \\
(b) level 5
\end{center}
\end{minipage}
\begin{minipage}{0.32\textwidth}
\begin{center}
\includegraphics[width=.8\linewidth]{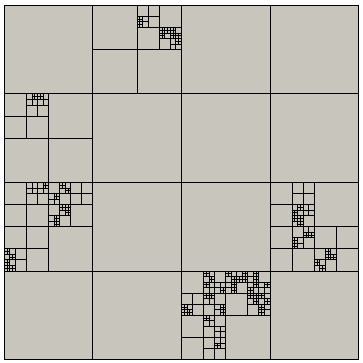}\\
(c) level 6 
\end{center}
\end{minipage}
\end{center}
\end{minipage}
\caption{Refinement of the quadrilateral element mesh: 50\% of the elements from the previous level are randomly selected for refinement.} 
\label{fig:quad_refinement}
\end{figure}

\begin{figure}[!b] 
\begin{minipage}{1.0\textwidth}
\begin{center}
\begin{minipage}{0.32\textwidth}
\begin{center}
\includegraphics[width=.8\linewidth]{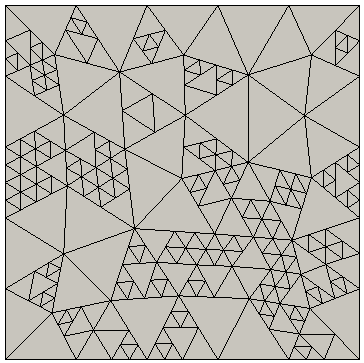}\\
(a) level 2 
\end{center}
\end{minipage}
\begin{minipage}{0.32\textwidth}
\begin{center}
\includegraphics[width=.8\linewidth]{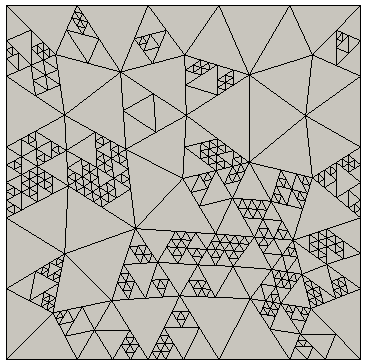} \\
(b) level 3 
\end{center}
\end{minipage}
\begin{minipage}{0.32\textwidth}
\begin{center}
\includegraphics[width=.8\linewidth]{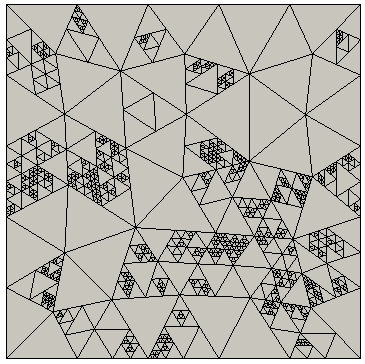}\\
(c) level 4
\end{center}
\end{minipage}
\end{center}
\end{minipage}
\caption{Refinement of the triangular element mesh: 50\% of the elements from the previous level are randomly selected for refinement.} 
\label{fig:tri_refinement}
\end{figure}

In this section, we compare the performance of our algorithm with  the BPWX, and show that
since our multigrid performs a global smoothing on all nodes at a given multigrid level,
the drawbacks of the BPWX method are eliminated.

We do so by evaluating the spectral radius of $E_J = I-\widehat D_J^{-1} \widehat A_J$, where the matrix $\widehat D_J^{-1}$ 
corresponds to one V-cycle of either the BPWX multigrid or the one proposed in this work.
For this purpose, we use the method developed in \cite{oosterlee1998evaluation}. 

\begin {table}[!t]
\caption{
\new{Spectral radius considering the meshes depicted in Figure \ref{fig:quad_refinement} and Figure\ref{fig:tri_refinement}}.
One pre and post-smoothing iteration is considered.}
\begin{center}
	\begin{tabular}{|c|c|c|c|c|} \hline	
	Methods & \multicolumn{2}{|c|}{Proposed Method} 
	        & \multicolumn{2}{|c|}{BPWX Method} \\ \hline 
	     ~L~ & ~Quad.~ & ~Tri.~ & ~Quad.~ & ~Tri.~ \\ \hline   
	     (a) & 1.75e-2 & 5.35e-2 & 1.37e-1 & 3.03e-1 \\ \hline 
	     (b) & 2.25e-2 & 5.79e-2 & 1.97e-1 & 3.04e-1 \\ \hline 
	     (c) & 1.74e-2 & 7.87e-2 & 2.04e-1 & 3.04e-1 \\ \hline 
	\end{tabular}
\end{center}
\label{Spectral_levels}
\end{table}

Consider one iteration for the algorithm described in Section \ref{num_alg}.
The solution at the first iteration step is given by 
\[\widehat{\vec{u}}_J^{1} = \widehat{\vec{u}}_J^{0} + \widehat{\vec{w}}_J^1 
= (I-\widehat D_J^{-1}\widehat A_J)\,\widehat{\vec{u}}_J^{0} +  \widehat D_J^{-1} \widehat{\vec{f}}_J. \]
Let $ \widehat{\vec{u}}_J^{0} $ be zero, impose zero Dirichlet boundary conditions for $\widehat{\vec{u}}_J^{1}$, and set 
$ \widehat{\vec{f}}_J = \vec{e}_{l}$, with entries one at the $l$-th row and zero elsewhere. Then, the system becomes 
\[\widehat{\vec{u}}_{J,l}^{1} = \widehat D_J^{-1} \vec{e}_{l},~ \mbox{ for } l=1,\dots, N_J.\]
The inverse of the multigrid matrix is obtained using the solution $\widehat{\vec{u}}_{J,l}^{1}$ as $l$-th column of $\widehat D_J^{-1}$.
The spectrum of $\widehat D_J^{-1} \widehat A_J$ follows.

We remark that in the following examples only one iteration of the multigrid V-cycle is carried out.
At each level, we employ a Richardson smoother with damping factor 0.6, preconditioned with ILU.
Tests are made considering the bilinear operator $a(u,v) = \langle \nabla u , \nabla v \rangle$.
We use biquadratic and triquadratic Lagrangian test functions in 2D and 3D, respectively,
and study the performances of the two methods
with respect to different element types and 
different numbers of pre and post-smoothing iterations.

Convergence of the multigrid solver is obtained only if the spectral radius of $I-\widehat D_J^{-1} \widehat A_J$ is less than one. Moreover, the smaller such a quantity is, the better $\widehat D_J^{-1}$ approximates the inverse of $A_J$ and the faster the solver converges to the solution.

\begin {table}[!b]
\caption{Spectral radius for several numbers of pre and post-smoothing iterations. 
The chosen refined configuration corresponds
to case (c) for both mesh types in Figure \ref{fig:quad_refinement} and Figure\ref{fig:tri_refinement}.}
\begin{center}
	\begin{tabular}{|c|c|c|c|c|} \hline	
	Methods & \multicolumn{2}{|c|}{Proposed Method} 
	        & \multicolumn{2}{|c|}{BPWX Method} \\ \hline 
	     ~Smoothing~ & ~Quad.~ & ~Tri.~ & ~Quad.~ & ~Tri.~ \\ \hline   
	     1 & 1.74e-2 & 7.87e-2 & 2.04e-1 & 3.04e-1 \\ \hline 
	     2 & 5.80e-2 & 2.10e-2 & 2.02e-1 & 2.62e-1 \\ \hline 
	     4 & 2.10e-2 & 3.90e-3 & 2.02e-1 & 2.50e-1 \\ \hline 
	     8 & 9.16e-4 & 1.00e-3 & 2.02e-1 & 2.47e-1 \\ \hline 
	\end{tabular}
\end{center}
\label{Spectral_smoothing}
\end{table}

We first consider two types of elements for the two-dimensional unit square geometry: quadrilaterals and triangles. 
The criterion for the local selective refinement is that 50\% of the elements from the previous level are randomly selected for refinement.
For each element type we study the three irregular triangulations depicted in Figures \ref{fig:quad_refinement}    
and \ref{fig:tri_refinement}. 
In the quadrilateral case, the coarse mesh consists of a $2\times2$ regular triangulation, that is uniformly refined once and then 
selectively refined $2$, $4$ and $5$ times, for a total of $323$, $1201$ and $2265$ degrees of freedom, 
respectively. See cases (a), (b) and (c) in Figure \ref{fig:quad_refinement}.    
In the triangular case, the coarse mesh has $62$ elements 
that are selectively refined $2$, $3$ and $4$ times, for a total of $1145$, $2349$ and $4603$ degrees of freedom, respectively.
See cases (a), (b) and (c) in Figure \ref{fig:tri_refinement}.     

Both our method and the BPWX method are tested. 
Table \ref{Spectral_levels} shows the spectral radius of $I-\widehat D_J^{-1} \widehat A_J$ for the six cases in Figures \ref{fig:quad_refinement}    
and \ref{fig:tri_refinement},  when only one pre and post-smoothing iterations are used. 
As the number of levels increases, the spectral radius has slight variations but does not change dramatically 
for both quadrilateral and triangular elements. 
More importantly, at each level, with the same element type, the spectral radius of the new method 
is 3 to 11 times smaller than the one in the BPWX method. 
Roughly speaking this indicates that our solver would converge 3 to 11 times faster than the BPWX. 

Next we investigate the effect of the number of pre and post-smoothing iterations on the spectral radius at level c, see Table \ref{Spectral_smoothing}. 
Here we consider 1, 2, 4 and 8 symmetric pre and post-smoothing iterations.
With the proposed method, under the same element type, the spectral radius reduces 3 to 5 times as we double the number of iterations,  
while in the BPWX method it quickly saturates. 
Thus we expect that, with the proposed method, the number of linear iterations required by the solver to converge would decrease significantly,
while they would stay the same for the BPWX method.

We conclude that for two-dimensional examples, 
our method always has an advantage over the BPWX in terms of convergence rate of the solver. 
Moreover, this advantage increases dramatically as the number of smoothing iterations increases.

\begin{figure}[!t] 
\begin{minipage}{1.0\textwidth}
\begin{center}
\begin{minipage}{0.49\textwidth}
\begin{center}
\includegraphics[width=.7\linewidth]{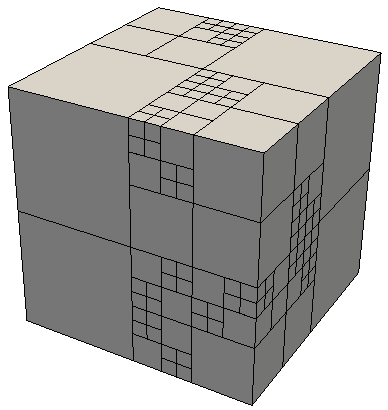}\\
hexahedral elements 
\end{center}
\end{minipage}
\begin{minipage}{0.49\textwidth}
\begin{center}
\includegraphics[width=.7\linewidth]{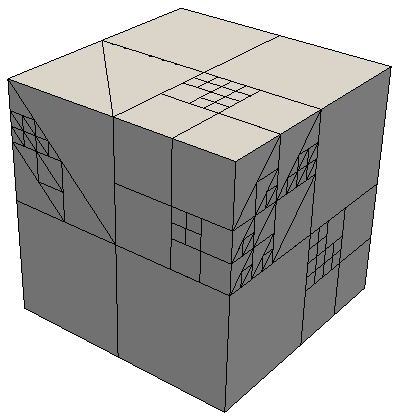}\\
mixed elements 
\end{center}
\end{minipage}
\end{center}
\end{minipage}
\caption{Refinement of different types of elements starting with one uniform level and refining up to three nonuniform levels: 
50\% of the elements from the previous level are randomly selected for refinement.} 
\label{fig:3D_refinement}
\end{figure}
\begin {table}[!b]
\caption{Spectral radius for different refined meshes.
One pre and one post-smoothing iteration is considered.}
\begin{center}
	\begin{tabular}{|c|c|c|c|c|c|c|c|c|} \hline	
	 & \multicolumn{4}{|c|}{Proposed Method} 
	        & \multicolumn{4}{|c|}{BPWX Method} \\ \hline 
	     L & Hex. & Wedge & Tet. & Mixed &  Hex. & Wedge & Tet. & Mixed \\ \hline   
	     1 & 7.50e-3 & 3.95e-2 & 2.31e-2 & 3.58e-2 & 9.34e-2 & 2.08e-1 & 6.96e-2 & 1.59e-1 \\ \hline
	     2 & 1.61e-2 & 5.71e-2 & 5.80e-2 & 1.01e-1 & 2.55e-1 & 2.07e-1 & 1.19e-1 & 2.29e-1 \\ \hline 
	     3 & 1.76e-2 & 6.25e-2 & 6.00e-2 & 1.12e-1 & 2.63e-1 & 2.98e-1 & 1.37e-1 & 3.17e-1 \\ \hline 
	\end{tabular}
\end{center}\label{Spectral_levels3d}
\end{table}

We then investigate a three-dimensional example in a unit box geometry 
considering triangulations made of four types of elements: hexahedra, wedges, tetrahedra and a combination of the three.
The criterion for the local selective refinement remains the same as the two-dimensional case. 
The coarse grid consists of $8$, $16$, $6$ and $20$ elements for 
the hexahedra, wedges, tetrahedra and the combination of the three, respectively.
Each coarse grid is selectively refined $1$, $2$ and $3$ times. 
The total number of degrees of freedom is $401$, $1389$ and $5087$ for the hexahedral case,
$506$, $1817$, $6538$ for the wedge-shaped case, $146$, $456$, $1684$ for the tetrahedral case, 
and $656$, $2306$, $8560$ for the mixed case.
Figure \ref{fig:3D_refinement} shows the three-dimensional irregular triangulations for the hexahedral and mixed cases at level $3$.

The results for the spectral radius with one pre and post-smoothing iteration are listed in 
Table \ref{Spectral_levels3d} for both the proposed method and the BPWX method.
Notice that the spectral radius under the same type of elements 
does not change significantly from level $2$ to $3$.
In some cases a larger variation occurs from level $1$ to $2$,   
that we attribute to the presence of relatively few degrees of freedom at level $1$. 
At each level, considering the same element type, the spectral radius of the new method 
is 2 to 15 times smaller than the one obtained with the BPWX method. 
Roughly speaking this indicates that our solver would converge 2 to 15 times faster than the BPWX one. 
In particular the hexahedral case performs the best, while the mixed is 2-4 times better.

Next we investigate the effect of the number of pre and post-smoothing iterations on the spectral radius at level $3$, see Table \ref{Spectral_smoothing3d}. Again we consider 1, 2, 4 and 8 symmetric pre and post-smoothing iterations.
Under the same element type in our method the spectral radius reduces 2 to 5 times doubling the number of iterations, 
while in the BPWX method it is invariant. 
Thus, we expect that the number of linear iterations required by solver to converge would decrease significantly with the proposed method, while it would stay the same for the BPWX method.
Once again the hexahedra perform the best, followed by the wedges, the tetrahedra and the mixed ones.
\begin {table}[!t]
\caption{Spectral radius for several numbers of pre and post-smoothing iterations. 
The chosen refined configuration corresponds to level 3 for all mesh types.}
\begin{center}
	\begin{tabular}{|c|c|c|c|c|c|c|c|c|} \hline	
	 & \multicolumn{4}{|c|}{Proposed Method} 
	        & \multicolumn{4}{|c|}{BPWX Method} \\ \hline 
	     S &Hex.& Wedge& Tet. & Mixed &  Hex. & Wedge & Tet. & Mixed \\ \hline   
	     1 & 1.76e-2 & 6.25e-2 & 6.00e-2 & 1.12e-1 & 2.63e-1 & 2.98e-1 & 1.37e-1 & 3.17e-1 \\ \hline
	     2 & 7.20e-3 & 2.23e-2 & 1.53e-2 & 3.45e-2 & 2.63e-1 & 2.65e-1 & 1.29e-1 & 3.16e-1 \\ \hline 
	     4 & 3.00e-3 & 1.05e-2 & 6.20e-3 & 2.08e-2 & 2.63e-1 & 2.61e-1 & 1.29e-1 & 3.16e-1 \\ \hline 
	     8 & 6.64e-4 & 3.40e-3 & 1.80e-3 & 1.06e-2 & 2.63e-1 & 2.61e-1 & 1.29e-1 & 3.16e-1 \\ \hline 
	\end{tabular}
\end{center}
\label{Spectral_smoothing3d}
\end{table}
%
%
%

\begin{figure}[!b]
\begin{center}
\begin{minipage}[t]{0.45\linewidth}
\centering
\includegraphics[width=.65\linewidth]{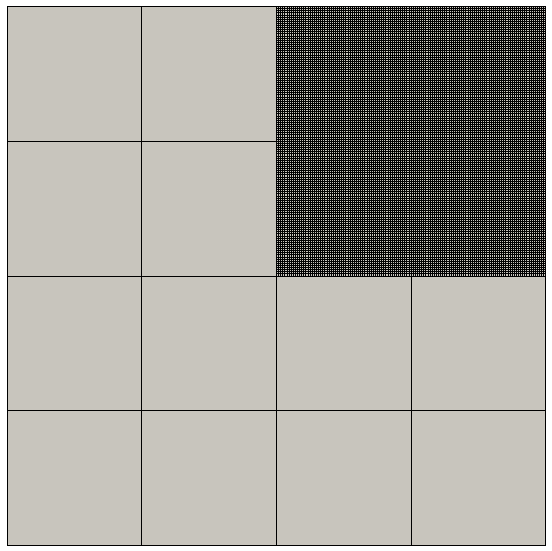}
\caption{Refinement of the first quadrant after 7 refinements.} 
\label{fig:quadrant_refinement}
\end{minipage} 
\hspace{0.125in}
\begin{minipage}[t]{0.45\linewidth}
\centering
\includegraphics[width=.65\linewidth]{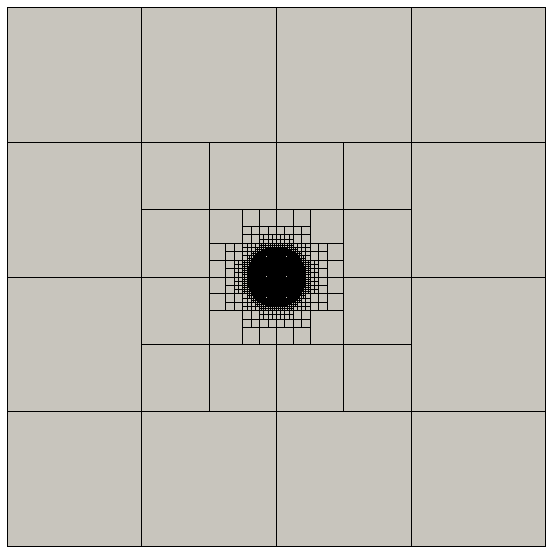}
\caption{Refinement of the circle after 8 refinements.} 
\label{fig:circle_refinement}
\end{minipage}
\end{center}
\end{figure}
\section{The Poisson problem}\label{numex1}
We continue our investigation by considering the weak formulation of the Poisson problem discussed in \cite{janssen2011adaptive}.
Namely find $u_h \in {V}_h \subset H_0^1(\Omega)$ such that
\begin{equation}
\label{Poisson_weak} 
(\nabla u_h, \nabla v_h) = (f,v_h), 
\mbox{ for all } v_h \in {V}_h,
\end{equation}
with $\Omega = [-1, 1]\times[-1,1]$, and $f \equiv 1$.
The space ${V}_h$ denotes an appropriate finite element space.
Note that the multigrid method for irregular triangulations proposed in \cite{janssen2011adaptive}
\new{only applies to $1$-irregular grids and has been implemented
only on quadrilateral and hexahedral element}. 
Our method has no such limitations.  

We start with a coarse grid consisting of $4$ elements, which are uniformly refined once.
Two selective refinement strategies are used:
refine only the elements in the first quadrant ($x \geq 0, y \geq 0 $),
and refine all the elements for which the centers are located within the circle of radius
$\frac{\pi}{4 k}$ ($k = 1,...,J-1$) centered at the origin. 
Figure \ref{fig:quadrant_refinement} shows the irregular triangulation of the first quadrant
for  $J=7$. Figure \ref{fig:circle_refinement} shows the irregular triangulation of the circle
for  $J=8$.

%
%
%
%
%
\begin {table}[!t]
\caption{Number of iterations and average logarithmic convergence rate for the Poisson problem.}
\begin{center}
	\begin{tabular}{|c|c|c|c|c|c|c|c|c|} \hline	
	\multirow{2}{*} {L}   & \multicolumn{2}{|c|}{Quadrant} & \multicolumn{2}{|c|}{Quadrant \cite{janssen2011adaptive}} 
			      & \multicolumn{2}{|c|}{Circle} & \multicolumn{2}{|c|}{Circle \cite{janssen2011adaptive}} \\ \cline {2-9} 
				& $n_{10}$ & $\bar{r}$ & $n_{10}$& $\bar{r}$ & $n_{10}$ & $\bar{r}$ & $n_{10}$ & $\bar{r}$ \\ \hline 
	     3 & 6 & 1.72 & 4 & 2.56 & 6 & 1.94 & 5 & 2.29 \\ \hline 
	     4 & 7 & 1.55 & 6 & 1.69 & 6 & 1.67 & 6 & 1.83 \\ \hline 
	     5 & 7 & 1.47 & 7 & 1.61 & 7 & 1.62 & 6 & 1.76 \\ \hline 
	     6 & 7 & 1.46 & 7 & 1.60 & 7 & 1.57 & 7 & 1.44 \\ \hline 
	     7 & 8 & 1.41 & 7 & 1.60 & 7 & 1.53 & 7 & 1.61 \\ \hline 
	     8 & 8 & 1.39 & 7 & 1.60 & 7 & 1.50 & 7 & 1.57 \\ \hline 
	     9 & 8 & 1.38 & 7 & 1.60 & 7 & 1.50 & 7 & 1.59 \\ \hline
	    10 & 8 & 1.37 & 7 & 1.60 & 7 & 1.52 & 7 & 1.59 \\ \hline
	\end{tabular}
\end{center}
\label{possion_interation}
\end{table}
\begin {table}[!b]
\caption{Comparison of different degrees of finite element families for the Poisson problem.}
\setlength\tabcolsep{5.5pt} 
\begin{center}
	\begin{tabular}{|c|c|c|c|c|c|c|c|c|c|c|c|c|} \hline	
	\multirow{3}{*} {L}   & \multicolumn{6}{|c|}{Quadrant} & \multicolumn{6}{|c|}{Circle} \\ \cline {2-13}
	& \multicolumn{2}{|c|} {Bilinear} & \multicolumn{2}{|c|} {Quadratic} & \multicolumn{2}{|c|} {Biquadratic} 
	& \multicolumn{2}{|c|} {Bilinear} & \multicolumn{2}{|c|} {Quadratic} & \multicolumn{2}{|c|} {Biquadratic} \\ \hline		      
	& $n_{10}$ & $\bar{r}$ & $n_{10}$& $\bar{r}$ & $n_{10}$ & $\bar{r}$ & $n_{10}$ & $\bar{r}$ & $n_{10}$ & $\bar{r}$ & $n_{10}$ & $\bar{r}$ \\ \hline 
	     3 & 6 & 1.72 & 7 & 1.52 & 7 & 1.57 & 6 & 1.94 & 7 & 1.56 & 7 & 1.61 \\ \hline 
	     4 & 7 & 1.55 & 7 & 1.51 & 7 & 1.46 & 6 & 1.67 & 7 & 1.51 & 7 & 1.51 \\ \hline 
	     5 & 7 & 1.47 & 7 & 1.48 & 8 & 1.41 & 7 & 1.62 & 7 & 1.50 & 7 & 1.48 \\ \hline 
	     6 & 7 & 1.46 & 7 & 1.46 & 8 & 1.39 & 7 & 1.57 & 7 & 1.50 & 7 & 1.49 \\ \hline 
	     7 & 8 & 1.41 & 7 & 1.44 & 8 & 1.39 & 7 & 1.53 & 7 & 1.52 & 7 & 1.56 \\ \hline 
	     8 & 8 & 1.39 & 7 & 1.43 & 8 & 1.39 & 7 & 1.50 & 7 & 1.55 & 7 & 1.57 \\ \hline 
	     9 & 8 & 1.38 & 8 & 1.44 & 8 & 1.38 & 7 & 1.50 & 7 & 1.57 & 7 & 1.57 \\ \hline
	    10 & 8 & 1.37 & 8 & 1.42 & 8 & 1.37 & 7 & 1.52 & 7 & 1.60 & 7 & 1.61 \\ \hline
	\end{tabular}
\end{center}
\label{possion_interation_degree}
\end{table}

Continuous piecewise-bilinear approximation is considered for both $u_h$ and $v_h$.
The linear system arising from the weak formulation in Eq.\eqref{Poisson_weak} 
defined on the above irregular triangulations is solved 
using a Conjugate Gradient (CG) solver, preconditioned with the proposed V-cycle Multigrid.
At each level, we employ a Richardson smoother with one iteration for pre and post-smoothing and damping factor 0.8.
The smoother is further preconditioned with ILU. 
In Table \ref{possion_interation}, we compare our results with the ones in \cite{janssen2011adaptive}.
The multigrid smoother in \cite{janssen2011adaptive} is a symmetric multiplicative Schwarz smoother \cite{xu1992iterative} and the smoothing procedure is performed only
locally.
We present the number of CG steps $n_{10}$ needed to reduce the norm of the residual $r$ by a factor of $10^{10}$, 
and the average logarithmic convergence rate according to Janssen \cite{janssen2011adaptive} and Varga \cite{varga2009matrix}:
$$ \overline{r} = \frac{1}{n} \log_{10} \frac{|{r_0}|}{|{r_n}|},  $$
where $|{r_n}|$ is the Euclidean norm of the residual vector $r_n$ at the $n-$th CG step.
We observe that as the local refinement level increases, the values of both $n_{10}$ and $\overline{r}$ saturate and 
are very close to the results from Janssen \cite{janssen2011adaptive}.
No significant degeneration is observed showing 
that our method is suitable for more general applications with respect to the $1$-irregular grid constraint.

Table \ref{possion_interation_degree} shows more results for various finite element families: bilinear, quadratic and biquadratic.
For both refinements of the  circle and  of the first quadrant, the number of iterations and average logarithmic convergence rate 
are almost independent of the refinement level and of the degree of the finite element family.

\begin{figure}[!b] 
\begin{minipage}{1.0\textwidth}
\begin{center}
\begin{minipage}{0.42\textwidth}
\begin{center}
\includegraphics[width=.75\linewidth]{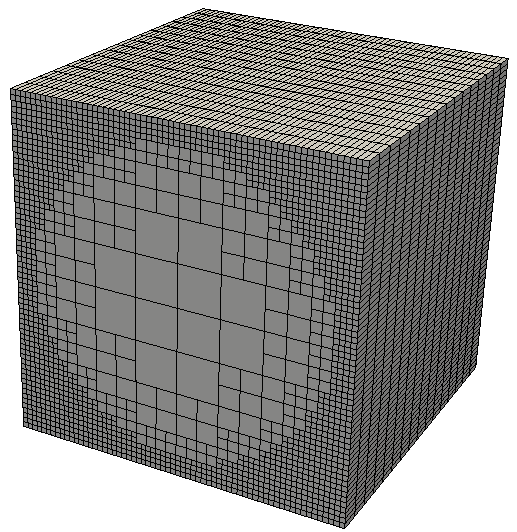}\\
(a) level-$4$ grid
\end{center}
\end{minipage}
\begin{minipage}{0.53\textwidth}
\begin{center}
\includegraphics[width=.75\linewidth]{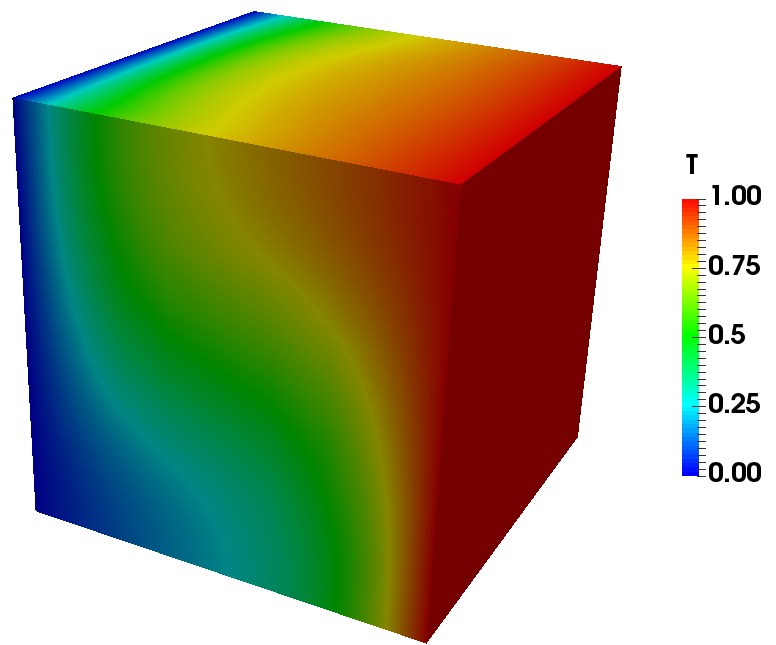} \\
(b) temperature distribution    
\end{center}
\end{minipage}
\end{center}
\end{minipage}
\caption{Three-dimensional buoyancy driven flow with $Pr = 1 $ and $Ra =10000$.} 
\label{fig:buoyancy_refinement}
\end{figure}  
\section{The buoyancy driven flow} \label{numex2}
We conclude with the non-dimensional buoyancy driven flow problem obtained 
using the Boussinesq approximation \cite{christon2002computational,elman2011fast}, 
\begin{align}
& \vec{u}\cdot \nabla \vec{u}-{Pr}^{1/2} Ra^{-1/2} \Delta \vec{u}+\nabla p+\hat{g}T=0, \label{Strong_Buoyancy_mm} \\
& \nabla \cdot \vec{u}=0, \label{Srong_Buoyancy_div} \\
& \vec{u}\cdot \nabla T - Pr^{-1/2} Ra^{-1/2} \Delta T=0, \label{Strong_Buoyancy_temp}
\end{align}
for $\Omega = [-0.5, 0.5]^3 \subset \mathbb{R}^{3}$. 
The symbols $\vec{u}$, $p$ and $T$ denote the velocity, pressure and temperature fields, respectively. 
The Prandtl number $Pr$ describes the ratio between momentum diffusivity and thermal diffusivity, while
the Rayleigh number $Ra$ describes the relation between buoyancy and viscosity within a fluid.
The quantity $\hat{g}$ is the downward unit vector along the gravity direction. 
The boundary conditions for the system \eqref{Strong_Buoyancy_mm} -- \eqref{Strong_Buoyancy_temp} are
$ \vec{u}=\vec{0} $ on $\partial \Omega$, 
$T = T_D $ on $ \partial \Omega_D $, and
$\nabla T \cdot \vec{n} = 0 $ on $ \partial \Omega_N $,
where $\partial \Omega = \partial \Omega_D \cup \partial \Omega_N$ is the whole boundary.
The subsets $\partial \Omega_D$ and $\partial \Omega_N$ are portions of $\partial \Omega$, where 
Dirichlet and Neumann boundary conditions are specified for the variable $T$, respectively.
Given the finite element spaces $\mathbf  V_{h} \subset \mathbf{H}_{0}^{1} (\Omega )$, 
$\Pi_{h} \subset L_{0}^{2} (\Omega )$ and $X_{h} \subset H^{1} (\Omega )$, 
the weak solution $(\vec{u}_{h} ,p_{h} ,T_{h}) \in \mathbf  V_{h} \times \Pi_{h} \times X_{h}$ 
of the buoyancy driven flow problem satisfies
\begin{align}
 &  (\vec{u}_h\cdot\nabla\vec{u}_h, \vec{v}_h) + 
    {Pr}^{1/2} Ra^{-1/2} (\nabla\vec{u}_h, \nabla\vec{v}_h)-(p_h, \nabla\cdot\vec{v}_h) -(T_h,v_{h,g}) = 0, \label{Weak_Buoyancy_mm}\\
 &  (\nabla\cdot\vec{u}_h, q_h)=0, \label{Weak_Buoyancy_div} \\
 &  (\nabla T_h, \nabla r_h)+ {Pr}^{-1/2} Ra^{-1/2} (\vec{u}_h\cdot\nabla T_h, r_h)=0, \label{Weak_Buoyance_temp}
\end{align}
for all $(\vec{v}_{h}, q_{h}, r_{h}) \in \mathbf V_{h} \times \Pi_{h} \times X_{h}$, 
where ${v}_{h,g} =-\vec{v}_{h} \cdot \hat{g}$. 
For the temperature variable, $T_D$ equals $1$ on the right face, $T_D$ equals $0$ on the left face, 
and Neumann zero boundary conditions are taken on the remaining boundaries.
We consider  a continuous triquadratic Lagrangian discretization for ${\vec{u}_h}$ and ${\vec{v}_h}$, a
discontinuous piecewise-linear discretization for $p_h$ and $ q_h \in \Pi_h $, and a continuous  
trilinear Lagrangian discretization for $T_h$ and ${r_h}$.
Three different values for the Rayleigh number $Ra$ are considered, namely $Ra = 1000$, $5000$ and $10000$, 
while the Prandtl is constant and equal to $1$.

The regular coarse triangulation consists of a $4 \times 4 \times 1$ hexahedral mesh, uniformly refined once
and selectively refined $1$, $2$ and $3$ times.   
The local refinement procedure follows the strategy that all elements outside 
the cylinder of radius $0.5(1-0.5^k)$ ($k=1,2,...,J-1$) and centered on the z-axis are refined.
See Figure \ref{fig:buoyancy_refinement} (a), where the z-axis is parallel to the depth direction.

We use a Newton scheme to linearize 
the nonlinear equation system \eqref{Weak_Buoyancy_mm} - \eqref{Weak_Buoyance_temp}, for more details see \cite{ke2017block}. 
At each Newton step, a GMRES solver preconditioned with our multigrid is applied to solve the linearized system. 
In the Newton scheme, the stopping tolerance for the $L_{2}-$norm of the distance between two successive solutions is $1.0\times 10^{-8}$. 
In the GMRES solver, the relative tolerance and absolute tolerance for the scaled preconditioned residual are $1.0\times 10^{-8}$ and $1.0\times 10^{-15}$, respectively.   
For the multigrid V-cycle, once again we use a Richardson smoother with one pre and post-smoothing iteration and damping factor $0.6$.
The level solver is further preconditioned with ILU.
Figure \ref{fig:buoyancy_refinement} (b) shows the temperature distribution at level $4$ for the irregular triangulation
with $Pr=1$ and $Ra=10000$. 

\begin {table}[!t]
\setlength\tabcolsep{2.5pt} 
\caption{Numerical results for both uniform and  nonuniform refinement for the buoyancy driven flow.}
\begin{center}
	\begin{tabular}{|c|c||c|c||c|c||c|c|} \hline	
	 &$Ra$  &\multicolumn{2}{|c||}{1000} & \multicolumn{2}{|c||}{5000} & \multicolumn{2}{|c|}{10000} \\ \hline
	 {L} & &uniform & \shortstack{ \\ nonuniform} & uniform & \shortstack{ \\ nonuniform} & uniform & \shortstack{ \\ nonuniform} \\ \hline
 	     2 &\shortstack{ $\,$\\Newton \\ GMRES \\ Timing}& \shortstack {5\\23.8  \\28s    } & \shortstack {5\\24   \\ 25.4s}   &  \shortstack{ 6 \\ 30.0  \\ 34.3s}   & \shortstack {6\\ 30.3 \\ 32.4s}   & \shortstack {8\\ 30.4 \\ 45.7s}   & \shortstack{ 8\\ 30.3 \\ 41.9s}   \\ \hline	 
 	      	       & $n$ &0.90 & 0.91 &0.89 & 0.93 &0.91 & 0.93\\ \hline 	       
 	     3 &\shortstack{ $\,$\\Newton \\ GMRES \\ Timing}& \shortstack {5\\40.0  \\257.6s } & \shortstack {5\\28.2 \\ 158.2s}  &  \shortstack{ 6 \\ 45.2  \\ 322.2s}  & \shortstack {6\\ 33.7 \\ 194.8s}  & \shortstack {8\\ 41.3 \\ 410.2s}  & \shortstack{ 8\\ 31.9 \\ 253.9s}  \\ \hline 
 	      	       & $n$ &0.93 & 0.93 &0.92 & 0.92 &0.91 & 0.91\\ \hline 	       
 	     4 &\shortstack{ $\,$\\Newton \\ GMRES \\ Timing}& \shortstack {5\\49.4  \\2260.2s} & \shortstack {5\\37.6 \\ 968.6s}  &  \shortstack{ 6 \\ 56.5  \\ 2916.1s} & \shortstack {6 \\43.5 \\ 1214.5s} & \shortstack {8\\ 55   \\ 3828.0s} & \shortstack{ 8\\ 40.8 \\ 1621.6s} \\ \hline 
\end{tabular} 
\end{center}
\label{buoyancy_flow}
\end{table}

The numerical results for the number of Newton iterations, 
the average number of GMRES iterations per Newton iteration, 
and the total computational time are listed in Table \ref{buoyancy_flow}.
When uniform refinement is carried out, the total number of degrees of freedom is $N = 34944$, $258044$ and $1981428$ at levels $2$, $3$ and $4$, 
respectively. With nonuniform refinement, we have $N = 30270$, $161942$ and $878726$ at levels $2$, $3$ and $4$, respectively.
At each level, the number of Newton iterations for the cases of uniform and nonuniform refinement is the same.
The average number of GMRES steps increases for both cases as the level increases.
However, for the case of non-uniformly refined meshes, it increases by a smaller factor.
Table \ref{buoyancy_flow} also shows that the computational time is slightly better than $O(\mbox{N})$ 
as the mesh is refined for both the uniform and nonuniform refinement cases. 
To estimate the complexity of the algorithm, we used the formula $ \mbox{Timing}_J = C \mbox{N}_J^n,$ 
for some constant $C$ independent of J. Then, considering 2 successive levels $J-1$ and $J$
\begin{equation} \label{time_scale}
n = \frac{ \ln(\mbox{Timing}_{J}/\mbox{Timing}_{J-1})}{\ln(\mbox{N}_{J}/\mbox{N}_{J-1})}.
\end{equation}
For $n=1$ the complexity is linear, for $n<1$ it is better than linear and for $n>1$ it is worse than linear.
In our table, $n$ is always less than 1, being consistent with the standard multigrid theory for elliptic problems.


The numerical evidence we reported shows great robustness of our multigrid algorithm.

\section{Conclusions}\label{Conclusions}
We have presented continuous basis functions for the construction of finite element spaces built on
irregular grids arising from a local midpoint refinement procedure.
We do not require the grids to be $1$-irregular as it is done in most existing works in the literature. 
This makes our results suitable for arbitrary-level hanging node configurations. 
\new{Our method works with any finite element geometry, such as quadrilaterals, 
triangles, tetrahedra, hexahedra, wedges and mixed grids, and can be applied to all Lagrangian shape functions satisfying the delta property.
Our results are comparable to existing results obtained with the deal.II library for the case of $1$-irregular grids.}
In addition, the use of continuous basis functions allows us to perform the smoothing procedure on the entire multigrid space 
rather than just on a subspace, as it is usually done for local refinement strategies.
This results in better convergence properties that improve with the number of smoothing iterations.

In conclusion, the multigrid method presented in this work is broader and more versatile compared to existing strategies \new{for 
h-refinement}.
It is robust both as a solver and as a preconditioner, and can be applied to linear and nonlinear problems 
defined on grids with arbitrary-level hanging node configurations. 

\new{Future work will consist of extending the proposed method to the case of hp-refinement.
Although only $1$-irregular meshes are implemented in the deal.II library, the case of hp-refinement is completely addressed in deal.II.
The extension of the proposed method to Hermite, H(div) and H(curl) finite elements is also object of current research.}

\bibliographystyle{plain}
\bibliography{hng_nd}

\begin{thebibliography}{10}

\bibitem{femus-web-page}
Eugenio Aulisa, Simone Bn{\'a}, and Giorgio Bornia.
\newblock {FEM}u{S} {W}eb page.
\newblock \url{https://github.com/eaulisa/MyFEMuS}, 2017.

\bibitem{aulisa2017mgdd}
Eugenio Aulisa, Giorgio Bornia, Sara Calandrini, and Giacomo Capodaglio.
\newblock Convergence estimates for multigrid algorithms with ssc smoothers and
  applications to overlapping domain decomposition.
\newblock {\em submitted: arXiv:1709.09628}, 2017.

\bibitem{babuvska1981error}
Ivo Babu{\v{s}}ka and Milo~R Dorr.
\newblock Error estimates for the combined h and p versions of the finite
  element method.
\newblock {\em Numerische Mathematik}, 37(2):257--277, 1981.

\bibitem{petsc-web-page}
Satish Balay, Shrirang Abhyankar, Mark~F. Adams, Jed Brown, Peter Brune, Kris
  Buschelman, Lisandro Dalcin, Victor Eijkhout, William~D. Gropp, Dinesh
  Kaushik, Matthew~G. Knepley, Lois~Curfman McInnes, Karl Rupp, Barry~F. Smith,
  Stefano Zampini, and Hong Zhang.
\newblock {PETS}c {W}eb page.
\newblock \url{http://www.mcs.anl.gov/petsc}, 2017.

\bibitem{bangerth2007deal}
Wolfgang Bangerth, Ralf Hartmann, and Guido Kanschat.
\newblock deal. ii—a general-purpose object-oriented finite element library.
\newblock {\em ACM Transactions on Mathematical Software (TOMS)}, 33(4):24,
  2007.

\bibitem{bangerth2009data}
Wolfgang Bangerth and O~Kayser-Herold.
\newblock Data structures and requirements for hp finite element software.
\newblock {\em ACM Transactions on Mathematical Software (TOMS)}, 36(1):4,
  2009.

\bibitem{bank1983some}
Randolph~E Bank, Andrew~H Sherman, and Alan Weiser.
\newblock Some refinement algorithms and data structures for regular local mesh
  refinement.
\newblock {\em Scientific Computing, Applications of Mathematics and Computing
  to the Physical Sciences}, 1:3--17, 1983.

\bibitem{bramble1993multigrid}
James~H Bramble.
\newblock {\em Multigrid methods}, volume 294.
\newblock CRC Press, 1993.

\bibitem{bramble1987new}
James~H Bramble and Joseph~E Pasciak.
\newblock {New convergence estimates for multigrid algorithms}.
\newblock {\em Mathematics of computation}, 49(180):311--329, 1987.

\bibitem{bramble1993new}
James~H Bramble and Joseph~E Pasciak.
\newblock New estimates for multilevel algorithms including the {V}-cycle.
\newblock {\em Mathematics of computation}, 60(202):447--471, 1993.

\bibitem{bramble1991convergence}
James~H Bramble, Joseph~E Pasciak, Jun~Ping Wang, and Jinchao Xu.
\newblock Convergence estimates for multigrid algorithms without regularity
  assumptions.
\newblock {\em Mathematics of Computation}, 57(195):23--45, 1991.

\bibitem{bramble1988analysis}
James~H Bramble, Joseph~E Pasciak, and Jinchao Xu.
\newblock The analysis of multigrid algorithms for nonsymmetric and indefinite
  elliptic problems.
\newblock {\em Mathematics of Computation}, 51(184):389--414, 1988.

\bibitem{brenner2007mathematical}
Susanne Brenner and Ridgway Scott.
\newblock {\em The mathematical theory of finite element methods}, volume~15.
\newblock Springer Science \& Business Media, 2007.

\bibitem{carstensen2009hanging}
Carsten Carstensen and Jun Hu.
\newblock Hanging nodes in the unifying theory of a posteriori finite element
  error control.
\newblock {\em Journal of Computational Mathematics}, pages 215--236, 2009.

\bibitem{christon2002computational}
Mark~A Christon, Philip~M Gresho, and Steven~B Sutton.
\newblock Computational predictability of time-dependent natural convection
  flows in enclosures (including a benchmark solution).
\newblock {\em International Journal for Numerical Methods in Fluids},
  40(8):953--980, 2002.

\bibitem{demkowicz1989toward}
Leszek Demkowicz, J~Tinsely Oden, Waldemar Rachowicz, and O~Hardy.
\newblock Toward a universal hp adaptive finite element strategy, part 1.
  constrained approximation and data structure.
\newblock {\em Computer Methods in Applied Mechanics and Engineering},
  77(1-2):79--112, 1989.

\bibitem{di2016easy}
Paolo Di~Stolfo, Andreas Schr{\"o}der, Nils Zander, and Stefan Kollmannsberger.
\newblock An easy treatment of hanging nodes in hp-finite elements.
\newblock {\em Finite Elements in Analysis and Design}, 121:101--117, 2016.

\bibitem{elman2011fast}
Howard Elman, Milan Mihajlovi{\'c}, and David Silvester.
\newblock Fast iterative solvers for buoyancy driven flow problems.
\newblock {\em Journal of Computational Physics}, 230(10):3900--3914, 2011.

\bibitem{fries2011hanging}
Thomas-Peter Fries, Andreas Byfut, Alaskar Alizada, Kwok~Wah Cheng, and Andreas
  Schr{\"o}der.
\newblock Hanging nodes and xfem.
\newblock {\em International Journal for Numerical Methods in Engineering},
  86(4-5):404--430, 2011.

\bibitem{guo1986hp}
B~Guo and I~Babu{\v{s}}ka.
\newblock The hp version of the finite element method.
\newblock {\em Computational Mechanics}, 1(1):21--41, 1986.

\bibitem{gupta1978finite}
Ajaya~Kumar Gupta.
\newblock A finite element for transition from a fine to a coarse grid.
\newblock {\em International Journal for Numerical Methods in Engineering},
  12(1):35--45, 1978.

\bibitem{hackbusch2013multi}
Wolfgang Hackbusch.
\newblock {\em Multi-grid methods and applications}, volume~4.
\newblock Springer Science \& Business Media, 2013.

\bibitem{janssen2011adaptive}
B{\"a}rbel Janssen and Guido Kanschat.
\newblock Adaptive multilevel methods with local smoothing for {H$^{1}$}-and
  {H$^{curl}$}-conforming high order finite element methods.
\newblock {\em SIAM Journal on Scientific Computing}, 33(4):2095--2114, 2011.

\bibitem{ke2017block}
Guoyi Ke, Eugenio Aulisa, Giorgio Bornia, and Victoria Howle.
\newblock Block triangular preconditioners for linearization schemes of the
  {R}ayleigh--{B}{\'e}nard convection problem.
\newblock {\em Numerical Linear Algebra with Applications}, 2017.

\bibitem{kus2014arbitrary}
Pavel Kus, Pavel Solin, and David Andrs.
\newblock Arbitrary-level hanging nodes for adaptive hp-fem approximations in
  3d.
\newblock {\em Journal of Computational and Applied Mathematics}, 270:121--133,
  2014.

\bibitem{morton1995new}
DJ~Morton, JM~Tyler, and JR~Dorroh.
\newblock A new 3d finite element for adaptive h-refinement in 1-irregular
  meshes.
\newblock {\em International journal for numerical methods in engineering},
  38(23):3989--4008, 1995.

\bibitem{olshanskii2004convergence}
Maxim~A Olshanskii and Arnold Reusken.
\newblock Convergence analysis of a multigrid method for a convection-dominated
  model problem.
\newblock {\em SIAM journal on numerical analysis}, 42(3):1261--1291, 2004.

\bibitem{oosterlee1998evaluation}
Cornelis~W Oosterlee and Takumi Washio.
\newblock An evaluation of parallel multigrid as a solver and a preconditioner
  for singularly perturbed problems.
\newblock {\em SIAM Journal on Scientific Computing}, 19(1):87--110, 1998.

\bibitem{rachowicz1989toward}
W~Rachowicz, J~Tinsley Oden, and L~Demkowicz.
\newblock Toward a universal hp adaptive finite element strategy part 3. design
  of hp meshes.
\newblock {\em Computer Methods in Applied Mechanics and Engineering},
  77(1-2):181--212, 1989.

\bibitem{rachowicz2002hp}
Waldemar Rachowicz and L~Demkowicz.
\newblock An hp-adaptive finite element method for electromagnetics—part ii:
  A 3d implementation.
\newblock {\em International journal for numerical methods in engineering},
  53(1):147--180, 2002.

\bibitem{rachowicz2000hp}
Waldemar Rachowicz and Leszek Demkowicz.
\newblock An hp-adaptive finite element method for electromagnetics: Part 1:
  Data structure and constrained approximation.
\newblock {\em Computer methods in applied mechanics and engineering},
  187(1):307--335, 2000.

\bibitem{reusken2002convergence}
Arnold Reusken.
\newblock Convergence analysis of a multigrid method for convection--diffusion
  equations.
\newblock {\em Numerische Mathematik}, 91(2):323--349, 2002.

\bibitem{schroder2011constrained}
Andreas Schr{\"o}der.
\newblock Constrained approximation in hp-fem: Unsymmetric subdivisions and
  multi-level hanging nodes.
\newblock {\em Spectral and High Order Methods for Partial Differential
  Equations (JS Hesthaven and EM Ronquist, eds.)}, (76):317--325, 2011.

\bibitem{vsolin2004goal}
P~{\v{S}}ol{\i}n and L~Demkowicz.
\newblock Goal-oriented hp-adaptivity for elliptic problems.
\newblock {\em Computer Methods in Applied Mechanics and Engineering},
  193(6):449--468, 2004.

\bibitem{vsolin2008arbitrary}
Pavel {\v{S}}ol{\'\i}n, Jakub {\v{C}}erven{\`y}, and Ivo Dole{\v{z}}el.
\newblock Arbitrary-level hanging nodes and automatic adaptivity in the hp-fem.
\newblock {\em Mathematics and Computers in Simulation}, 77(1):117--132, 2008.

\bibitem{varga2009matrix}
Richard~S Varga.
\newblock {\em Matrix iterative analysis}, volume~27.
\newblock Springer Science \& Business Media, 2009.

\bibitem{wu2006analysis}
Chin-Tien Wu and Howard~C Elman.
\newblock Analysis and comparison of geometric and algebraic multigrid for
  convection-diffusion equations.
\newblock {\em SIAM Journal on Scientific Computing}, 28(6):2208--2228, 2006.

\bibitem{xu1992iterative}
Jinchao Xu.
\newblock Iterative methods by space decomposition and subspace correction.
\newblock {\em SIAM review}, 34(4):581--613, 1992.

\bibitem{yserentant1993old}
Harry Yserentant.
\newblock Old and new convergence proofs for multigrid methods.
\newblock {\em Acta numerica}, 2:285--326, 1993.

\bibitem{zander2015multi}
Nils Zander, Tino Bog, Stefan Kollmannsberger, Dominik Schillinger, and Ernst
  Rank.
\newblock Multi-level hp-adaptivity: high-order mesh adaptivity without the
  difficulties of constraining hanging nodes.
\newblock {\em Computational Mechanics}, 55(3):499--517, 2015.

\end{thebibliography}
\end{document}